 \received{\dots}{\dots}
\renewcommand{\leq}{\leqslant}
\renewcommand{\geq}{\geqslant}
\DeclareMathOperator{\Z}{\mathbb{Z}}
\DeclareMathOperator{\Q}{\mathbb{Q}}
\DeclareMathOperator{\R}{\mathbb{R}}
\DeclareMathOperator{\VV}{\mathbb{V}} 
\DeclareMathOperator{\WW}{\mathbb{W}} 
\DeclareMathOperator{\MM}{\mathbb{M}} 
\DeclareMathOperator{\KK}{\mathbb{K}} 
\DeclareMathOperator{\KKf}{\mathbb{K}_{\rm f}} 
\DeclareMathOperator{\KS}{{\sf KS}} 
\DeclareMathOperator{\F}{\mathscr{F}} 
\DeclareMathOperator{\SC}{\mathscr{S}} 
\DeclareMathOperator{\m}{\frac{1}{2}} 
\DeclareMathOperator{\Sol}{\mathsf{Sol}} 
\DeclareMathOperator{\Ker}{\rm ker} 
\DeclareMathOperator{\conv}{\mathrm{conv}} 
\DeclareMathOperator{\den}{\mathrm{den}} 
\DeclareMathOperator{\ver}{\mathrm{ver}} 
\DeclareMathOperator{\rel}{\mathrm{relint}} 
\DeclareMathOperator{\N}{\mathscr{N}} 
\DeclareMathOperator{\McN}{\mathcal M}
\newtheorem{theorem}{Theorem}[section]
\newtheorem*{theoremI}{Theorem I}
\newtheorem*{theoremII}{Theorem II}
\newtheorem{lemma}[theorem]{Lemma}
\newtheorem*{claimnonum}{Claim}
\theoremstyle{definition}
\newtheorem{question}{Question}
\newtheorem{definition}[theorem]{Definition}
\newtheorem{remark}[theorem]{Remark}
\newtheorem*{remarknonum}{Remark}
\newtheorem*{remarks}{Remarks}
\begin{document}
\title[MV-algebras free over Kleene algebras]{MV-algebras   freely generated by finite  Kleene algebras}
\author[S. Aguzzoli] 
{Stefano Aguzzoli}
\email{aguzzoli@dsi.unimi.it}
\address{Dipartimento di Scienze dell'Informazione, Universit\`a degli
Studi di Milano. Via Comelico 39--41, 20135 Milano, Italy}

\author[L. M. Cabrer] 
{Leonardo M. Cabrer}
\email{lmcabrer@yahoo.com.ar}
\address{Mathematisches Institut, Universit\"at Bern.
Sidlerstrasse 5, CH-3012 Bern, Switzerland}

\author[V. Marra] 
{Vincenzo Marra}
\email{vincenzo.marra@unimi.it}
\address{Dipartimento di Matematica ``Federigo Enriques'', Universit\`a degli
Studi di Milano. Via Saldini 50, 20133 Milano, Italy}

\keywords 
{free algebra, reduct, MV-algebra, distributive lattice,
Kleene algebra, natural duality, simplicial complex, order complex, nerve of a partially ordered set, rational polyhedron, triangulation, unimodular simplicial complex, regular simplicial complex, basis, $\Z$-map.}

 \subjclass[2010]{
Primary: 06D35;
Secondary:  06D30, 06D50, 03C05.
}

\begin{abstract}If $\VV$ and $\WW$ are varieties of algebras such that any $\VV$-algebra $A$ has a reduct $U(A)$ in $\WW$, there is a forgetful functor $U \colon \VV \to \WW$ that acts by $A \mapsto U(A)$ on objects, and  identically on homomorphisms. This functor $U$ always has a left adjoint $F\colon \WW \to \VV$ by general considerations.  One calls $F(B)$ the \emph{$\VV$-algebra freely generated
by the $\WW$-algebra $B$}. Two problems  arise naturally in this broad setting. The {\it description problem} is to  describe the structure of the $\VV$-algebra $F(B)$ as explicitly as possible in terms of the structure of the $\WW$-algebra $B$.
The {\it recognition problem} is to find  conditions on the structure of a given $\VV$-algebra $A$ that are necessary and sufficient for the existence of a   $\WW$-algebra $B$ such that $F(B)\cong A$. Building on and extending previous work on MV-algebras freely generated by finite distributive lattices, in this paper we provide solutions to the description and recognition problems in case $\VV$ is the variety of MV-algebras, $\WW$ is the variety of Kleene algebras, and $B$ is finitely
generated---equivalently, finite.
The proofs rely heavily on the Davey-Werner natural duality for Kleene algebras,  on the  representation of finitely presented MV-algebras by  compact rational polyhedra, and on the theory of bases of MV-algebras.
\end{abstract}

\maketitle

\section{Introduction.}\label{s:introduction}
Consider a variety of algebras $\VV$, and write $\F_{\kappa}^{\VV}$ for the  algebra in $\mathbb{V}$ freely generated by  a set of cardinality $\kappa$. Suppose further that $\WW$ is a variety such that any $\VV$-algebra $A$ has a reduct $U(A)$ in $\WW$. Then there exists a forgetful functor $U \colon \VV \to \WW$ that acts by $A \mapsto U(A)$ on objects, and  identically on homomorphisms. This functor $U$ always has a left adjoint, as follows. Let $B$ be any $\WW$-algebra, and say its cardinality is $\kappa=|B|$. Then $B$ is isomorphic to a quotient $\F_{\kappa}^{\WW}/\Theta$, for some congruence $\Theta\subseteq \F_{\kappa}^{\WW}\times \F_{\kappa}^{\WW}$. Because of our assumption about $\VV$ and $\WW$, each $\WW$-term   also is a
$\VV$-term; hence, $\Theta$ generates a uniquely determined congruence  on $\F_{\kappa}^{\VV}$. More formally, there is a unique $\WW$-homomorphism
\[
u_{\kappa}\colon \F_{\kappa}^{\WW}\to U(\F_{\kappa}^{\VV})
\]
that extends the set-theoretic bijection between free generators. Writing
\[
u_{\kappa}^{2} \colon  (\F_{\kappa}^{\WW})^{2}\to U(\F_{\kappa}^{\VV})^{2}
\]
 for the product map $(x,y)\mapsto (u_{\kappa}(x),u_{\kappa}(y))$, let
 \[
 \widehat{u_k^2(\Theta)} \subseteq \F_{\kappa}^{\VV}\times \F_{\kappa}^{\VV}
 \]
 denote the $\VV$-congruence on $\F_{\kappa}^{\VV}$ generated by $u_\kappa^2(\Theta)$.
Then there  is a unique $\WW$-homomorphism
\[
B\cong \F_{\kappa}^{\WW}/\Theta
 \stackrel{\eta_{B}}{\longrightarrow}
 U(\F_{\kappa}^{\VV}/\widehat{u_\kappa^2(\Theta)})
\]
that extends the map that sends the $\Theta$-class of a free generator of $\F_{\kappa}^{\WW}$ to the $\widehat{u_\kappa^2(\Theta)}$-class of the corresponding generator of $\F_{\kappa}^{\VV}$.
(Here and throughout,~$\cong$ denotes the existence of an isomorphism). Now $\eta_{B}$ can be shown to be a universal arrow from $B$ to the functor $U$, in the sense of \cite[Definition on p.\ 55]{maclane}: for any $\VV$-algebra $A$ and any $\WW$-homomorphism $f\colon B \to U(A)$, there exists a unique $\VV$-homomorphism $f'\colon \F_{\kappa}^{\VV}/\widehat{u_\kappa^2(\Theta)} \to A$ such
that
${f=U(f')\circ\eta_{B}}$, 
i.e.,
such that the following diagram commutes.
\[
\xymatrix{
 & U(A) \ar@{<-}[d]^{f} \ar@{<--}[dl]_{U(f')} \\
 U(\F_{\kappa}^{\VV}/\widehat{u_\kappa^2(\Theta)}) \ar@{<-}[r]^{\ \ \ \ \ \ \eta_{B}} & B        }
\]
By \cite[Theorem IV.1.2]{maclane}, this universal property  of $\eta_{B}$ uniquely determines a functor $F\colon\WW\to \VV$ that is a left adjoint of $U$. The action of $F$ on a $\WW$-algebra $B$ is just
\begin{equation}\label{e:freeover}
F(B)=\F_{\kappa}^{\VV}/\widehat{u_\kappa^2(\Theta)} \ ,
\end{equation}
and one calls $F(B)$ the \emph{$\VV$-algebra freely generated
by the $\WW$-algebra $B$}; one also says that \emph{$F(B)$ is free over  $B$}.
(This terminology notwithstanding, note that neither $u_{\kappa}$ nor $\eta_{B}$ need  be injective, and that $B$ need not be isomorphic to a subalgebra of the $\WW$-reduct of $F(B)$.
Take for $\WW$ the variety of groups, and for $\VV$ the variety of Abelian groups. Then $F(B)$ is the Abelianization of $B$, and $\eta_{B}$ is a quotient map that need not be into).
As with all universal constructions, $F(B)$ is uniquely determined to within an isomorphism. Also, note that if $B$ is \emph{finitely presented}, i.e., if $\kappa$ is a finite integer and $\Theta$ is finitely generated, then so is $F(B)$, because in this case $\widehat{u_\kappa^2(\Theta)}$ is itself finitely generated by construction.

Given this broad setting, two questions  arise naturally.
\begin{enumerate}
\item[{\rm (I)}] {\it The description problem}. Given a $\WW$-algebra $B$, describe the structure of the $\VV$-algebra $F(B)$ as explicitly as possible in terms of the structure of $B$.
\item[{\rm (II)}] {\it The recognition problem}. Given a $\VV$-algebra $A$, find  conditions on the structure of $A$ that are necessary and sufficient for the existence of a   $\WW$-algebra $B$ such that $F(B)\cong A$.
\end{enumerate}
Let us point out that (II) is usually harder than (I), for quite general reasons. Indeed, (I) does not entail an existence question: the problem is to obtain a more transparent description of a \emph{given object}, namely, (\ref{e:freeover}). By contrast, (II) explicitly asks whether an object with a certain property \emph{exists}.

This paper is devoted to solving
problems (I) and (II) in case $\VV$ is the variety of {MV-algebras}, $\WW$ is the variety of Kleene algebras, and $B$ is finitely generated. When $\WW$ is, instead, the variety of distributive lattices, and $B$ is a finitely generated distributive lattice, a solution to (I) and (II) is given in \cite{marraarch}. Although our proofs are independent of \cite{marraarch}, the techniques used here build upon those employed in that paper. It may  also be of some interest to compare our treatment with the one in \cite{agm}, where (I) and (II) are solved in case $\VV$ is the subvariety of Heyting algebras generated by the linearly ordered algebras, $\WW$ is the variety of distributive lattices, and $B$ is finitely generated. In all cases, the availability of an efficient duality theory for  (relevant classes of)  the algebras at hand plays a crucial r\^{o}le.

The standard reference for the elementary theory of MV-algebras is \cite{cdm},
 whereas \cite{mundicibis} deals
with advanced topics.
An \emph{MV-algebra} is an algebraic structure $(M,\oplus,\neg,0)$, where $0\in M$
is a constant, $\neg$ is a unary operation satisfying $\neg\neg x=x$, $\oplus$ is a unary operation making $(M,\oplus,0)$
a commutative monoid, the element $1$ defined as $\neg 0$ satisfies $x\oplus1=1$, and the law
\begin{align}
 \neg(\neg x \oplus y)\oplus y = \neg(\neg y \oplus x)\oplus x\tag{MV}\label{mvlaw}
\end{align}
holds.  Any MV-algebra has an underlying structure of
distributive lattice bounded below by $0$ and above by $1$. Joins are defined as $x \vee y = \neg(\neg x \oplus  y)\oplus y$.
Thus, the characteristic law (\ref{mvlaw}) states
that $x\vee y=y\vee x$.
Meets are defined by the De Morgan condition  $x \wedge y = \neg (\neg x \vee \neg y)$.
The De Morgan dual of $\oplus$, on the other hand, is the operation traditionally denoted
\[
x \odot y = \neg (\neg x \oplus \neg y)
\]
which, like $\oplus$, is not idempotent. Boolean algebras are
those MV-algebras that are idempotent, meaning that $x\oplus x = x$ holds, or equivalently, that $x\odot x = x$ holds, or equivalently, that satisfy the {\it tertium non datur} law $x\vee\neg x=1$.

Let us further recall that a \emph{De Morgan algebra} is an algebraic structure $(A,\vee,\wedge,\neg,0,1)$ such that $(A,\vee,\wedge,0,1)$ is a distributive lattice with bottom element $0$ and top element $1$, $\neg$ is a unary operation satisfying $\neg\neg x=x$ and $\neg 0 = 1$, and the De Morgan law
\[\label{e:M}\tag{DM}
\neg(x\wedge y)=\neg x\vee\neg y
\]
holds.
Such an algebra is a \emph{Kleene algebra} if it satisfies the additional  equational law
\[\label{e:K}\tag{K}
(x\wedge\neg x) \vee (y\vee\neg y)  = y\vee\neg y 
.
\]
We write $\MM$ and $\KK$ for the varieties of MV-algebras and   Kleene algebras respectively. The real unit interval $[0,1]\subseteq \R$ can be made into an MV-algebra with neutral element $0$
by defining $x\oplus y = \min{\{x+y,1\}}$ and $\neg x=1-x$. The underlying lattice order of this MV-algebra coincides with
the order that $[0,1]$ inherits from the real numbers. When we
refer to  $[0,1]$ as an MV-algebra, we always mean the  structure just described. \emph{Chang's completeness theorem}  states that $[0,1]$ generates the variety $\MM$ \cite[2.5.3]{cdm}. It is then easily proved that (\ref{e:M}) and (\ref{e:K}) hold in all MV-algebras, by checking that they hold in $[0,1]$. We thus obtain a forgetful functor
$U\colon \MM \to \KK$
that assigns to each MV-algebra  its underlying Kleene algebra. We further let $F\colon \KK \to \MM$ be the left adjoint to $U$, as described above.  The three-element set
\[\textstyle 
K=\left\{0,\frac{1}{2},1\right\}
,
\]
endowed with its natural order, and equipped with the negation operation that  fixes $\frac{1}{2}$ and swaps $0$ with $1$, is a Kleene algebra.
In \cite[Lemma 2]{kalman},  Kalman showed  that $K$ and its only proper subalgebra, the two-element Boolean algebra $\{0,1\}$, are precisely the  non-singleton subdirectly irreducible Kleene algebras; therefore, $K$ generates the variety $\KK$. It follows at once that $\KK$ is locally finite (=
finitely generated Kleene algebras are finite), hence the three classes of finite, finitely presented, and finitely  generated Kleene algebras are coextensive. From now on we therefore deal exclusively with finite Kleene algebras. For further background and references on Kleene and De Morgan algebras please see \cite[Chapter XI]{bd}.

Our solution to problems (I) and (II) relies on the fact that an MV-algebra $A$  is  finitely presented  if and only if there is $n\in\{1,2,\ldots\}$ and a rational polyhedron $P\subseteq [0,1]^n$  such that $A$ isomorphic to the MV-algebra $\McN{(P)}$ of continuous piecewise linear functions $f\colon P \to [0,1]$ whose linear pieces are expressible as linear polynomials with integer coefficients (see \cite{mundicibis}  or Lemma~\ref{Th:DualAsSolutionSetMV} below).
To solve problem (I), given a Kleene algebra $B$ we determine a rational polyhedron
$P$ such that $F(B)$ is the finitely presented algebra $\McN{(P)}$. To solve problem (II), we first determine conditions that identify the rational polyhedra $P$ such that $\McN{(P)}$ correspond to $F(B)$ for some Kleene algebra $B$, and then we translate these into algebraic terms.

To develop this working strategy we need a number of background results.
First, we heavily exploit the finite fragment of the natural duality for Kleene algebras due to Davey and Werner \cite{DW} (see also \cite{CD}). The needed facts are collected in
Section~\ref{s:natural}. The object that is dual to a finite Kleene algebra is called a Kleene space: it is a finite  partially ordered set (= poset) endowed with a distinguished subset $M$ of its collection of maximal elements, and an additional binary relation $R$ satisfying appropriate conditions; cf.~
Definition \ref{d:kleenespace}. For example, consider the  Kleene chain (= totally ordered set) $C=(\{0,a,b,c,d,1\}, \min, \max, \neg, 0, 1)$ whose Hasse diagram is depicted in Figure \ref{fig:chain}.a,  and whose negation
is determined by $\neg a = d$ and $\neg b = c$.
It will transpire that its dual Kleene space is the three-element chain $D(C)$,  with $M$ the set containing its top element, and $R$ the total relation; see Figure \ref{fig:chain}.b.
\begin{figure}[h]
\centering
\subfigure[\text{The Kleene algebra $C$.}]
{\includegraphics[height=38mm, width=43mm]{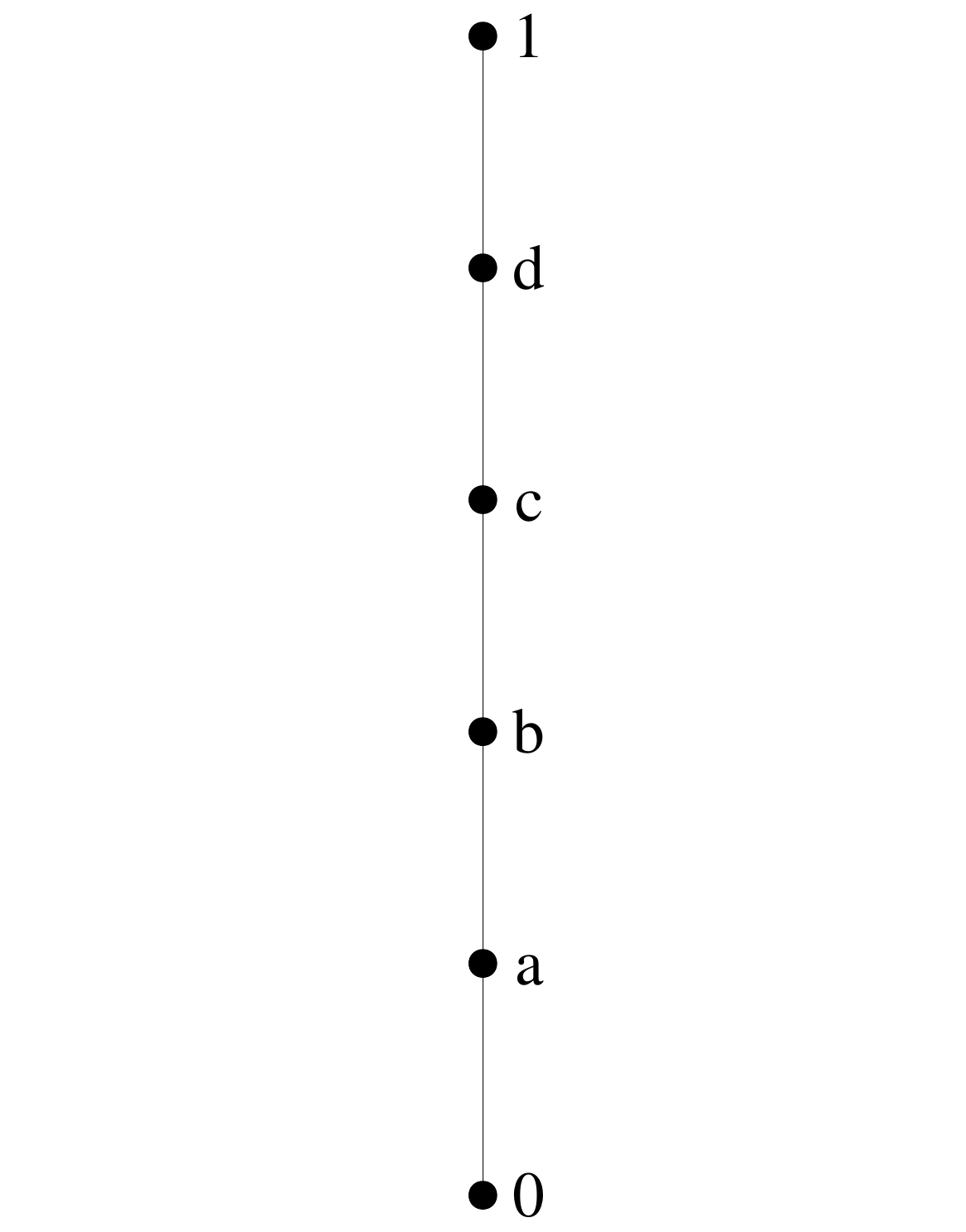}}
\qquad
\subfigure[\text{The Kleene space $D(C)$.}]
{\includegraphics[height=38mm, width=37mm]{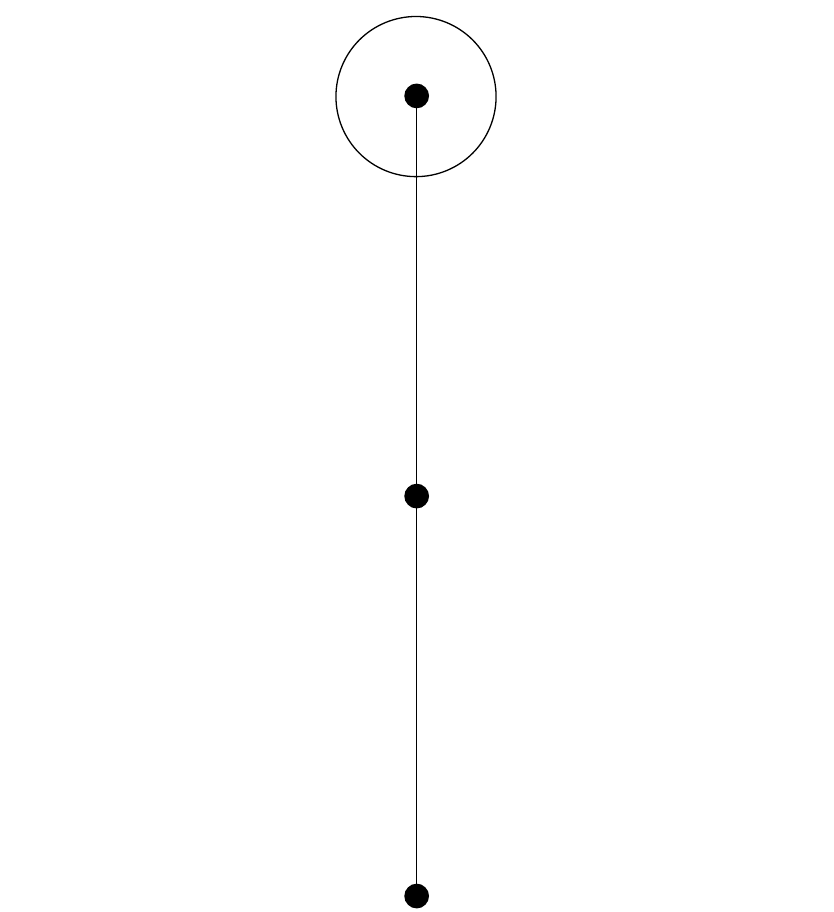}}
\caption{The natural duality for Kleene algebras.}
\label{fig:chain}
\end{figure}
In Section~\ref{s:description} we show how to construct an MV-algebra out of the dual  Kleene space of a finite Kleene algebra.
To any finite  poset a classical construction associates the abstract simplicial complex of its chains. This is known as the order complex, or nerve, of the poset.
The nerve $\N{(D(C))}$ of the Kleene space $D(C)$ of $C$, for instance, is ``the triangle'' shown in Figure \ref{fig:exe}.a; its ``vertices''  (= singleton chains) correspond to elements of the Kleene space $D(C)$. It turns out that
the nerve of the Kleene space of a finite Kleene algebra carries a natural weighing of its vertices induced by  the distinguished subset
$M$ of its maximal elements: vertices in $M$ are weighed $1$; vertices not in $M$ are weighed $2$. The resulting distribution of weights in our current example is shown  in Figure \ref{fig:exe}.a, too.

\begin{figure}[h]
\centering
\subfigure[\text{The nerve $\N(D(C))$.}]
{\includegraphics[scale=.4]{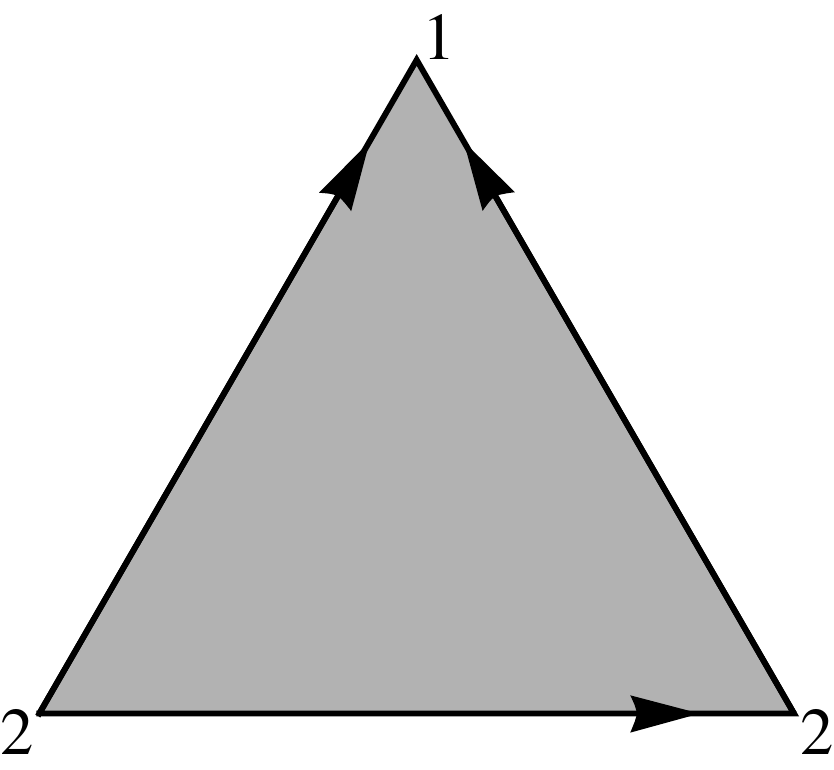}}
\qquad\qquad\qquad\qquad
\subfigure[\text{A  triangle $T\subseteq[0,1]^{3}$.}]
{\includegraphics[scale=.4]{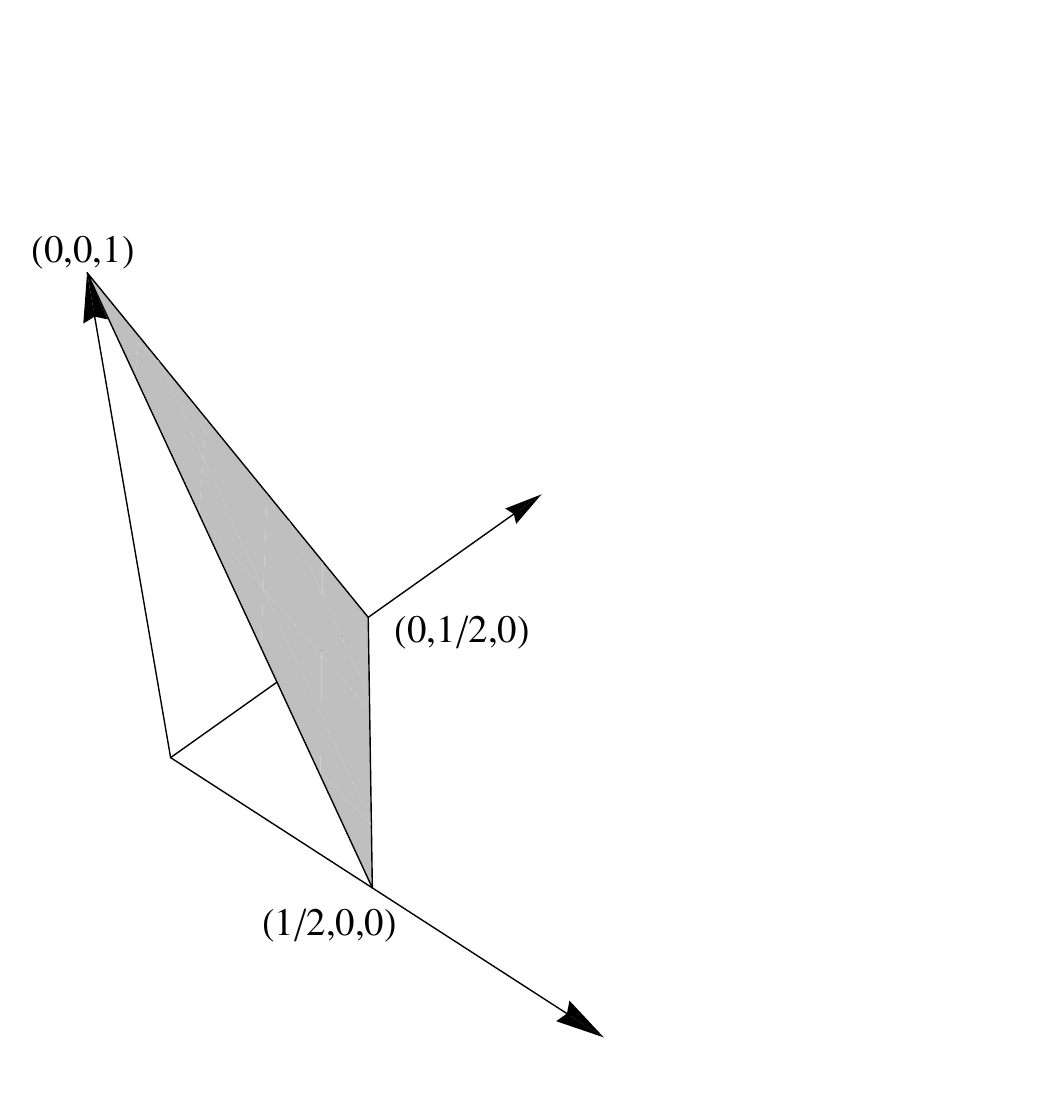}}
\caption{A weighted nerve and its geometric realization as a rational polyhedron.}
\label{fig:exe}
\end{figure}

 To a weighted abstract simplicial complex  we associate its geometric realization as a rational polyhedron   in some finite-dimensional Euclidean vector space $\R^n$, where $n$
 is the number of vertices of the abstract complex.
  Figure \ref{fig:exe}.b depicts the geometric realization of the weighted nerve  $\N{(D(C))}$, a triangle $T$ in $[0,1]^{3}\subseteq \R^{3}$. The geometric realization is defined in such a way
(cf.~(\ref{eq:nerve}) in Section~\ref{s:description})  that each weight of
$\N{(D(C))}$ agrees with the denominator of the vector of rational coordinates of the corresponding vertex of $T$.
(See (\ref{eq:den}) in Section~\ref{s:description} for the notion of denominator of a rational vector.)
 Finally, to the rational polyhedron $T$ we associate the MV-algebra $\McN{(T)}$. Our Theorem I, proved in Section~\ref{s:description}, shows that $\McN{(T)}$ is the MV-algebra freely generated by the Kleene chain $C$ shown in Figure \ref{fig:chain}.a. Quite generally, Theorem I states that
\emph{the MV-algebra $F(B)$ freely generated by a finite Kleene algebra $B$ is the MV-algebra $\McN{(P)}$ of $\Z$-maps on the rational polyhedron $P$ that is the geometric realization of the weighted nerve $\N(D(B))$ of the Kleene space $D(B)$ dual to $B$}. This solves the description problem.

The solution to the recognition problem requires
one further piece of
background on MV-algebras,
namely, the theory of bases. The needed facts are recalled in Section~\ref{s:MV-duality}. Bases are generating sets of (necessarily finitely presented) MV-algebras characterised by very special, purely algebraic properties; see Definition \ref{d:basis} in Section~\ref{s:MV-duality}.  For example, Figure \ref{fig:schauder} shows a triangulation of $[0,1]^{2}$---called  a Kleene triangulation  in Definition \ref{d:kleenet}---along with some of the pyramidal functions at its vertices. (The height of the pyramid at the vertex~$v$ equals $\frac{1}{\den{v}}$, where the denominator $\den{v}$ is defined as in (\ref{eq:den}) below.) Such pyramidal functions are known as Schauder hats of the MV-algebra
$\McN{([0,1]^{2})}$. The set of all Schauder hats over the pictured triangulation turns out to be a basis of $\McN{([0,1]^{2})}$; see Lemma \ref{lemma:schauder-basis}. The defining properties of bases are to the effect that a basis of an MV-algebra $A$ completely encodes a rational polyhedron $P$ in $\R^{d}$,  unique to within an appropriate notion of homeomorphism, such that $A$ may be represented as the MV-algebra $\McN{(P)}$ of $\Z$-maps on $P$. Special classes of bases, then, correspond to polyhedra $P$ admitting  triangulations with special properties. The recognition problem (II) can be solved because we are able to characterise in this manner, via bases, those special triangulations that come from taking the weighted nerve of a Kleene space. Indeed, in Definition \ref{d:kleenebasis} of Section~\ref{s:recognition} we isolate a class of bases that we call Kleene bases; they are defined by elementary combinatorial properties. Theorem II, proved in Section~\ref{s:recognition}, asserts that \emph{an MV-algebra is free over some finite Kleene algebra if, and only if, it possesses a Kleene basis}.  This solves the recognition problem.

\begin{figure}[h]
\centering
{\includegraphics[scale=.6]{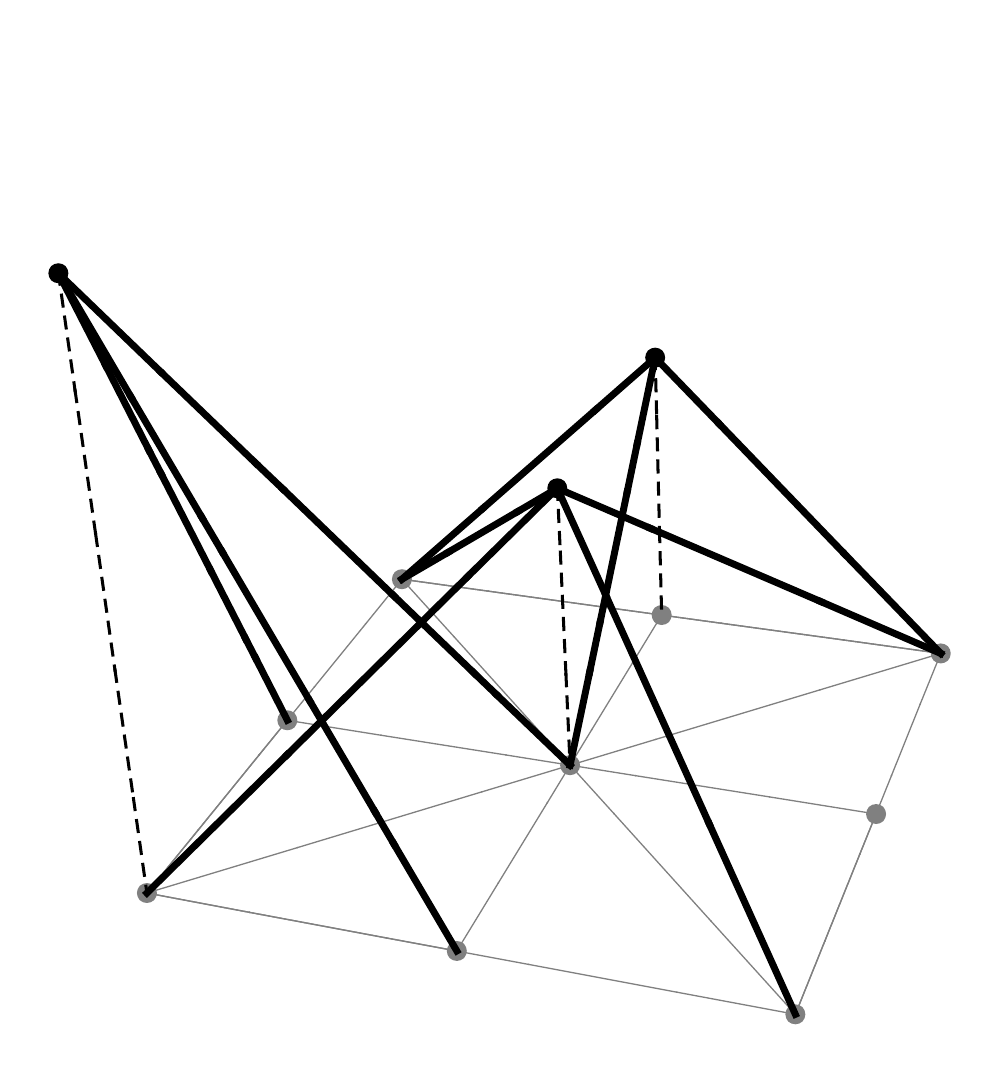}}
\caption{The Kleene  triangulation $\mathcal{S}_{2}$ of $[0,1]^2$, together with some Schauder hats.}
\label{fig:schauder}
\end{figure}

\section{Natural duality for Kleene algebras.}\label{s:natural}
A \emph{subposet} of a poset $(P,\leq)$ is a subset of $P$ endowed with the order inherited from $\leq$ by restriction. An order-preserving map $f\colon P \to Q$   between posets is an \emph{order-embedding} if it is injective and
reflects the order: $f(p_1)\leq f(p_2)$ implies $p_1\leq p_2$ for each $p_1,p_2\in P$. A \emph{chain} is a totally ordered set. If $P$ is a poset, $\max{P}$ denotes the set of maximal elements of $P$. The category of posets (with morphisms the order-preserving maps) has products; the $n$-fold product of $P$ with itself is denoted $P^{n}$, and consists of the set of $n$-tuples of elements of $P$ endowed with the coordinatewise order.

As  mentioned, the variety $\KK$ of Kleene algebras is generated by the algebra
$$
\textstyle K=(\{0,\m,1\},\wedge,\vee,\neg,0,1)
,
$$
where $(\{0,\m,1\},\wedge,\vee,0,1)$ is the bounded distributive lattice whose order is $0\leq \m \leq 1$ (a chain), and $\neg\colon \{0,\m,1\}\to \{0,\m,1\}$ is defined by $\neg(0)=1$, $\neg(1)=0$ and $\neg(\m)=\m$.
Moreover, every Kleene algebra is isomorphic to a subalgebra of products of the algebra $K$, that is, $\KK=\mathbb{ISP}{(K)}$;  see \cite{kalman}.
Building on these results, \cite{DW}  develops a natural duality for  Kleene algebras. (See also \cite[Theorem 4.3.10]{CD}.)
We recall here the facts about  this natural duality that we will need in the sequel; throughout, we are only concerned with finite Kleene algebras.
(See \cite{CD} for background on natural dualities.)

\begin{definition}\label{d:kleenespace}
  A structure $(W,\leq,R,M)$ is called a {\em \textup{(}finite\textup{)} Kleene space} if $(W,\leq)$ is a finite poset, $M\subseteq \max{W}$,  $R\subseteq W^{2}$, and the following hold.
  \begin{itemize}
    \item[(i)] $(x,x)\in R$;
    \item[(ii)] $(x,y)\in R$ and  $x\in M$ imply $y\leq x$;
    \item[(iii)] $(x,y)\in R$ and $z\leq y$ imply $(z,x)\in R$;
  \end{itemize}
  for each $x,y,z\in W$. Further,  a \emph{morphism of Kleene spaces} $(W,\leq,R,M)$ and $(W',\leq',R',M')$ is a function $f\colon W\to W'$  that is order-preserving, preserves $R$ (i.e., $(x,y)\in R$ implies $(f(x),f(y))\in R'$), and satisfies $f(M)\subseteq M'$.
  We write $\KS$ for the category of Kleene spaces and their  morphisms.
\end{definition}

The natural duality between Kleene algebras and Kleene spaces is determined by the Kleene space $\widetilde{K}=(\{0,\m,1\},\preceq,\sim, \{0,1\})$ where $\preceq$ is the order whose Hasse diagram is depicted in Figure \ref{Fig:V}, and
$\sim$ is a binary relation defined by: 
$(x,y)\in{\sim}$
iff

\begin{figure}[h]
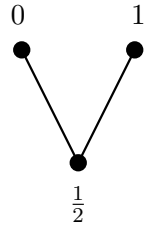

\centering
\begin{pgfpicture}{19.00mm}{83.14mm}{41.00mm}{116.14mm}
\pgfsetxvec{\pgfpoint{1.00mm}{0mm}}
\pgfsetyvec{\pgfpoint{0mm}{1.00mm}}
\color[rgb]{0,0,0}\pgfsetlinewidth{0.30mm}\pgfsetdash{}{0mm}
\pgfcircle[fill]{\pgfxy(22.49,107.51)}{1.00mm}
\pgfcircle[stroke]{\pgfxy(22.49,107.51)}{1.00mm}
\pgfcircle[fill]{\pgfxy(30.00,92.49)}{1.00mm}
\pgfcircle[stroke]{\pgfxy(30.00,92.49)}{1.00mm}
\pgfcircle[fill]{\pgfxy(37.51,107.51)}{1.00mm}
\pgfcircle[stroke]{\pgfxy(37.51,107.51)}{1.00mm}
\pgfputat{\pgfxy(22.00,111.00)}{\pgfbox[bottom,left]{\fontsize{10.95}{13.66}\selectfont \makebox[0pt]{$0$}}}
\pgfputat{\pgfxy(38.00,111.00)}{\pgfbox[bottom,left]{\fontsize{10.95}{13.66}\selectfont \makebox[0pt]{$1$}}}
\pgfputat{\pgfxy(30.00,86.00)}{\pgfbox[bottom,left]{\fontsize{10.95}{13.66}\selectfont \makebox[0pt]{$\m$}}}
\pgfmoveto{\pgfxy(22.49,107.51)}\pgflineto{\pgfxy(30.00,92.49)}\pgfstroke
\pgfmoveto{\pgfxy(30.00,92.49)}\pgflineto{\pgfxy(37.51,107.51)}\pgfstroke
\end{pgfpicture}%
\caption{The Kleene space $\widetilde{K}$.}\label{Fig:V}
\end{figure}
Let
\[
\widetilde{K}^{n}=\left(\{0,\m,1\}^{n},
 \preceq_n,
 \sim_n,
 \{0,1\}^{n}\right)
\]
be the Kleene space whose underlying poset is $\{0,\m,1\}^{n}$, where the relations $\preceq_n$ and $\sim_n$ are defined componentwise from the relations $\preceq$ and $\sim$ on $\widetilde{K}$, respectively. Thus, using infix notation, $(x_{1},\ldots,x_{n})\preceq_{n}(y_{1},\ldots,y_{n})$ iff $x_{i}\preceq y_{i}$ for each $i=1,\ldots, n$, where $x_{i},y_{i}\in \{0,\m,1\}$; similarly for $\sim_{n}$.
The Kleene space $\widetilde{K}^{2}$ is depicted in Figure \ref{fig:k2}. Here, nodes are labelled $1$ if they belong to~$\{0,1\}^{2}$, and $2$ otherwise.
(The reason we label nodes  this way will be explained later, cf.~
 Definition \ref{d:asspoly} and Section~\ref{s:recognition}). The relation $\sim_{2}$ is not  represented.
\begin{figure}[h]
\centering
{\includegraphics[scale=0.6]{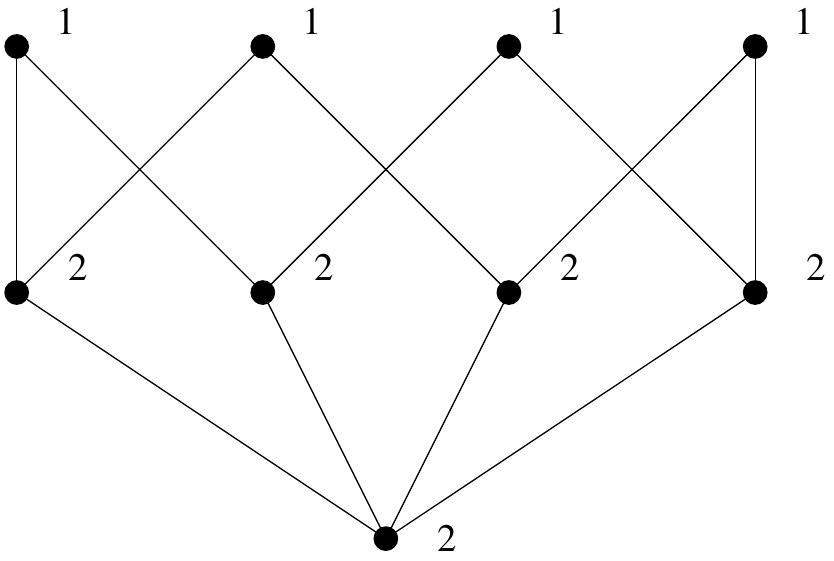}}
\caption{The underlying poset of the Kleene space $\widetilde{K}^{2}$, with labelled nodes.}
\label{fig:k2}
\end{figure}

 Let us write $\KKf$ for the category of finite Kleene algebras and their homomorphisms. Now the functor $D\colon \KKf\to \KS$ is defined as follows. For each finite Kleene algebra $B$, let
$$
D(B)=\{h\colon B\to K\mid h\mbox{ is a homomorphism}\}\subseteq \widetilde{K}^{B}
,
$$
endowed with the structure of Kleene space inherited from $\widetilde{K}^{B}$ by restriction.
For each homomorphism $f\colon B\to C$ of Kleene algebras, let
$D(f)$ be the map from $D(C)$ to $D(B)$
 defined by
$$
D(f)(h)=h\circ f
$$
for each $h\in D(C)$.

This functor $D$ determines a categorical equivalence between $\KKf$ and the opposite of the category of Kleene spaces, $\KS^{\rm op}$. The adjoint to $D$ is the functor $E\colon \KS\to \KKf$ defined as follows. For each  Kleene space $X$, let
$$
E(X)=\{h\colon X\to \widetilde{K}\mid h\mbox{ is a morphism of Kleene spaces}\}\subseteq K^{X}
,
$$
endowed with the structure of Kleene subalgebra of ${K}^{X}$.
For each morphism $f\colon X\to Y$ of Kleene spaces, let $E(f)\colon E(Y)\to E(X)$ be defined by
$$
E(f)(h)=h\circ f
$$
for each $h\in E(Y)$. An application of \cite[Proposition 2.2.3]{CD} yields:
\begin{lemma}\label{l:freeK}
For each  integer $n\geq 1$, the Kleene algebra $E(\widetilde{K}^{n})$ is the free  Kleene algebra over $n$ generators.  A free generating set of $E(\widetilde{K}^{n})$ is given by the projection maps
$\rho_i\colon\{0,\m,1\}^{n}\to \{0,\m,1\}$,  $i=1,\ldots,n$.\qed
\end{lemma}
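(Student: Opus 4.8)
The plan is to exhibit, for the algebra $E(\widetilde{K}^{n})$ equipped with the candidate basis $\rho_{1},\ldots,\rho_{n}$, the universal property of the free Kleene algebra on $n$ generators. Since $\KK=\mathbb{ISP}(K)$ with $K$ finite, the free algebra $\F_{n}^{\KK}$ is characterised up to a unique isomorphism fixing its generators by the requirement that, for every finite Kleene algebra $B$, the assignment $\varphi\mapsto(\varphi(\rho_{1}),\ldots,\varphi(\rho_{n}))$ be a bijection from $\mathrm{Hom}_{\KKf}(\F_{n}^{\KK},B)$ onto the set of $n$-tuples of elements of $B$, naturally in $B$. Thus it suffices to produce such a natural bijection with $E(\widetilde{K}^{n})$ in place of $\F_{n}^{\KK}$.

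First I would pass through the duality. Because $D$ and $E$ constitute a dual equivalence between $\KKf$ and $\KS$, the functor $D$ induces bijections $\mathrm{Hom}_{\KKf}(A,B)\cong\mathrm{Hom}_{\KS}(D(B),D(A))$, and $D(E(\widetilde{K}^{n}))\cong\widetilde{K}^{n}$; taking $A=E(\widetilde{K}^{n})$ gives
\[
\mathrm{Hom}_{\KKf}(E(\widetilde{K}^{n}),B)\;\cong\;\mathrm{Hom}_{\KS}(D(B),\widetilde{K}^{n}).
\]
Next I would use that $\widetilde{K}^{n}$ is the $n$-fold categorical product of $\widetilde{K}$ in $\KS$: its order, its relation $\sim_{n}$, and its distinguished set $M$ are all defined componentwise, so a morphism into $\widetilde{K}^{n}$ is exactly an $n$-tuple of morphisms into $\widetilde{K}$, whence
\[
\mathrm{Hom}_{\KS}(D(B),\widetilde{K}^{n})\;\cong\;\mathrm{Hom}_{\KS}(D(B),\widetilde{K})^{n}.
\]
Finally, by the very definition of $E$ one has $\mathrm{Hom}_{\KS}(D(B),\widetilde{K})=E(D(B))$ as sets, and $E(D(B))\cong B$ since $E\circ D\cong\mathrm{Id}_{\KKf}$. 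Chaining the three bijections yields a bijection, natural in $B$, between $\mathrm{Hom}_{\KKf}(E(\widetilde{K}^{n}),B)$ and the set of $n$-tuples of elements of $B$, which is precisely the universal property above.

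It then remains to check that this natural bijection is implemented by $\varphi\mapsto(\varphi(\rho_{1}),\ldots,\varphi(\rho_{n}))$, so that the $\rho_{i}$ are genuinely a free generating set and not merely witnesses of an abstract isomorphism $E(\widetilde{K}^{n})\cong\F_{n}^{\KK}$. I would do this by unwinding the evaluation isomorphism $\widetilde{K}^{n}\to D(E(\widetilde{K}^{n}))$, which sends a point $x\in\{0,\m,1\}^{n}$ to the homomorphism $a\mapsto a(x)$, together with the observation that the $i$-th product projection $\widetilde{K}^{n}\to\widetilde{K}$ corresponds, under $E$, exactly to the coordinate map $\rho_{i}\in E(\widetilde{K}^{n})$; tracing an element of $\mathrm{Hom}_{\KS}(D(B),\widetilde{K})^{n}$ back to a homomorphism out of $E(\widetilde{K}^{n})$ and evaluating on each $\rho_{i}$ then recovers the corresponding coordinate.

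The only genuinely delicate point is this last bookkeeping step: the existence of a natural hom-set bijection is a formal consequence of the dual equivalence and of the product structure of $\widetilde{K}^{n}$, whereas identifying the abstract free generators with the concrete projections $\rho_{i}$ requires care in tracking the evaluation maps through all three isomorphisms. Alternatively, and perhaps more transparently, one may view $E(\widetilde{K}^{n})$ as a subalgebra of $K^{\{0,\m,1\}^{n}}$: the inclusion of the clone of $n$-ary term operations generated by the $\rho_{i}$ into $E(\widetilde{K}^{n})$ is immediate from the fact that the relations $\preceq$, $\sim$ and the set $M$ are compatible with the operations of $K$, while the reverse inclusion---that every Kleene-space morphism $\widetilde{K}^{n}\to\widetilde{K}$ is a Kleene term operation---is exactly the substance of the Davey--Werner duality recalled above.
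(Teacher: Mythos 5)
Your argument is correct and is in substance the paper's own proof: the paper disposes of this lemma by citing Clark--Davey, Proposition 2.2.3, and what you write out is precisely the content of that citation --- the dual equivalence gives $\mathrm{Hom}_{\KKf}(E(\widetilde{K}^{n}),B)\cong\mathrm{Hom}_{\KS}(D(B),\widetilde{K}^{n})\cong E(D(B))^{n}\cong B^{n}$ using that $\widetilde{K}^{n}$ is the categorical product in $\KS$ (all structure being componentwise), with the product projections corresponding under $E$ to the coordinate maps $\rho_{i}$. The one point you leave implicit is that checking the universal property only against \emph{finite} algebras $B$ (forced on you because $D$ is defined on $\KKf$) suffices here: since $\KK$ is locally finite, any $n$-tuple in an arbitrary Kleene algebra generates a finite subalgebra, so the general universal property follows.
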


Given any subset $\Theta\subseteq E(\widetilde{K}^{n})^{2} $ we define:
\[
\Sol_{\KK}{(\Theta)}=\{v\in \{0,\m,1\}^{n}\mid f(v)=g(v) \mbox{ for each }(f,g)\in \Theta\}.
\]
It is an easy consequence of the natural duality above that any subset of $\{0,\m,1\}^{n}$ is the solution set of equations in the language of Kleene algebras.
\begin{lemma}\label{Th:DualAsSolutionSetKleene}
Let $W\subseteq \{0,\m,1\}^{n}$ be any subset. Then $(W,\preceq,\sim,M)$ is a subobject of the Kleene space
$\widetilde{K}^{n}=\left(\{0,\m,1\}^{n},
 \preceq_n,
\sim_n,
  \{0,1\}^{n}\right)$ if the binary relations $\preceq$, $\sim$, and the unary relation $M$ are defined by restriction
from~$\preceq_{n}$,~$\sim_{n}$, and $\{0,1\}^{n}$, respectively.  Writing $\iota\colon (W,\preceq,\sim,M)\hookrightarrow \widetilde{K}^{n}$ for the corresponding monomorphism of Kleene spaces, we have
\[
W=
\Sol_{\KK}{(\Ker{E(\iota)})}
,
\]
where  $\Ker{E(\iota)}=\{(f,g)\in E(\widetilde{K}^n)^2\mid E(\iota)(f)=E(\iota)(g)\}$.
\end{lemma}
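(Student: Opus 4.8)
The plan is to unwind the natural duality so that the equation $W=\Sol_{\KK}(\Ker E(\iota))$ becomes a tautology about evaluation maps. The key observation is that a point $v\in\{0,\m,1\}^n$ is exactly the same datum as a Kleene-space morphism $\widehat v\colon \{\star\}\to\widetilde K^n$ from a one-point Kleene space, equivalently a Kleene-algebra homomorphism $h_v\colon E(\widetilde K^n)\to K$ obtained by evaluation at $v$, namely $h_v(f)=f(v)$. Under the equivalence of Lemma~\ref{l:freeK}, the free generators $\rho_i$ are sent by $h_v$ to the coordinates $v_i$, so the assignment $v\mapsto h_v$ identifies $\{0,\m,1\}^n$ with $D(E(\widetilde K^n))$, and this identification is a Kleene-space isomorphism by the duality. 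I would record this evaluation correspondence as the first step.

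Second, I would translate the defining condition of $W=\Sol_{\KK}(\Ker E(\iota))$ through this correspondence. By definition $v\in\Sol_{\KK}(\Ker E(\iota))$ iff $f(v)=g(v)$ for every pair $(f,g)$ with $E(\iota)(f)=E(\iota)(g)$; since $f(v)=h_v(f)$ and $g(v)=h_v(g)$, this says precisely that $h_v$ vanishes on $\Ker E(\iota)$, i.e.\ that $h_v$ factors through the quotient $E(\widetilde K^n)/\Ker E(\iota)$. Because $\iota$ is a monomorphism of Kleene spaces and $E$ is (contravariantly) full and faithful, $E(\iota)$ is a surjection of Kleene algebras, so this quotient is canonically isomorphic to $E(W,\preceq,\sim,M)$ and $\Ker E(\iota)$ is exactly the kernel congruence of $E(\iota)$. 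Thus $v\in\Sol_{\KK}(\Ker E(\iota))$ iff the homomorphism $h_v$ factors as $h_v=h'\circ E(\iota)$ for some $h'\colon E(W,\preceq,\sim,M)\to K$.

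Third, I would read off the factorization geometrically. A factorization $h_v=h'\circ E(\iota)$ corresponds, under the duality $\KKf\simeq\KS^{\rm op}$, to a point $w\in D(E(W,\preceq,\sim,M))\cong (W,\preceq,\sim,M)$ whose image under $\iota$ is $\widehat v$; in other words $h_v$ factors through $E(\iota)$ exactly when $v$ lies in the image of $\iota$, that is, when $v\in W$. This gives both inclusions at once: if $v\in W$ then restriction along $\iota$ manifestly makes $h_v$ vanish on $\Ker E(\iota)$, and conversely vanishing on the kernel forces $v$ into the image of $\iota$, which is $W$.

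The step I expect to be the main obstacle is the middle one, namely verifying cleanly that $E$ carries the monomorphism $\iota$ to a \emph{surjective} Kleene-algebra homomorphism whose kernel is exactly $\Ker E(\iota)$, and that the induced map on duals recovers $W$. This is where the full strength of the Davey--Werner duality is used: one needs that $D$ is a categorical equivalence onto $\KS^{\rm op}$ (so that monos go to epis and $W\cong D(E(W,\preceq,\sim,M))$), rather than just a dual adjunction. Once fullness and faithfulness of $E$ on the finite level are in hand the rest is formal bookkeeping with evaluation maps, so I would state the surjectivity of $E(\iota)$ explicitly as the pivotal fact and attribute it to the duality recalled in this section.
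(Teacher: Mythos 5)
Your overall strategy---identify each $v\in\{0,\m,1\}^{n}$ with the evaluation homomorphism $h_{v}\colon E(\widetilde{K}^{n})\to K$, observe that $v\in\Sol_{\KK}(\Ker E(\iota))$ exactly when $h_{v}$ factors through $E(\iota)$, and then use the duality to conclude that this happens exactly when $v\in\iota(W)$---is a legitimate alternative to the paper's argument, and your treatment of the inclusion $W\subseteq\Sol_{\KK}(\Ker E(\iota))$ coincides with the paper's first paragraph. But the step you yourself flag as the main obstacle contains a genuine gap as justified. From ``$\iota$ is a monomorphism and $E$ is contravariantly full and faithful'' you may conclude only that $E(\iota)$ is an \emph{epimorphism} in $\KKf$; epimorphisms in a variety (even between finite algebras) need not be surjective, so ``$E(\iota)$ is a surjection'' does not follow. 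What is actually needed is the extension property: every Kleene-space morphism $W\to\widetilde{K}$ extends along $\iota$ to a morphism $\widetilde{K}^{n}\to\widetilde{K}$ (equivalently, $\widetilde{K}$ is injective in the dual category, i.e.\ the Davey--Werner duality is \emph{strong}, not merely a dual equivalence). That fact is true and available in \cite{CD}, but it is not among the facts recalled in Section~\ref{s:natural}, which only asserts the categorical equivalence $\KKf\simeq\KS^{\rm op}$; and without it your factorization $h_{v}=h'\circ E(\iota)$ only lands you in the image of $E(\iota)$ inside $E(W)$, whose dual may be strictly larger than $W$.

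The paper sidesteps all of this machinery. For the nontrivial inclusion it fixes $x\notin W$ and explicitly writes down two elements $f,g\in E(\widetilde{K}^{n})$ (defined by cases in terms of $\preceq_{n}$ and membership in $\{0,1\}^{n}$) that agree at every point of $W$, so that $(f,g)\in\Ker E(\iota)$, while $g(x)=1$ and $f(x)\in\{0,\m\}$; hence $x\notin\Sol_{\KK}(\Ker E(\iota))$. This explicit separating pair is, in effect, a hands-on proof of precisely the instance of injectivity/surjectivity your argument wants to import wholesale. To repair your proof, either cite the strongness of the duality explicitly (rather than deriving surjectivity from fullness and faithfulness), or prove the extension/separation property directly---in which case you will essentially reproduce the paper's construction, and the categorical superstructure becomes unnecessary.
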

\begin{proof}
Observe first
that if $(f,g)\in E(\widetilde{K}^{n})^{2}$, then $(f,g)\in\Ker{E(\iota)}$
iff $f\circ \iota=g\circ \iota$ iff $f(v)=g(v)$ for each $v\in W$.
Thus  $W\subseteq\Sol_{\KK}{(\Ker{E(\iota)})}$.

Assume now that $x\not\in W$. Define $f\colon \{0,\m,1\}^{n}\to \{0,\m,1\}$ by
\[
f(y)=\begin{cases}
 1& \mbox{if there exists }z\in W\mbox{ such that }z\preceq_{n} y\mbox{ and }x\preceq_{n} z;\\
0&\mbox{if }y \in \{0,1\}^{n}\mbox{ and  there does not exist }z\in W\mbox{such}\\ &\mbox{ that }z\preceq_{n} y\mbox{ and }x\preceq_{n}z ;\\
\m&\mbox{ otherwise.}\\
\end{cases}
\]
 Further, define $g\colon \{0,\m,1\}^{n}\to \{0,\m,1\}$ by
 \[
g(z)=\begin{cases}
1&\mbox{if }x\preceq_{n} z;\\
0&\mbox{if }z\in \{0,1\}^{n}\mbox{ and }x\not\preceq_{n} z ;\\
\m&\mbox{otherwise.}\\
\end{cases}
\]
A  simple verification shows that $f,g\in E(\widetilde{K}^{n})$. Since $x\not\in W$, $f(x)\in\{\m,0\}$ and $g(x)=1$. It follows that $x\not\in\Sol_{\KK}{(\Ker{E(\iota)})}$, as was to be shown.
\end{proof}

\section{The  companion polyhedron of a Kleene space: solution to the description problem.}\label{s:description}
We now show how to canonically construct an MV-algebra out of a Kleene space. It will transpire that this provides a solution to the description problem.

An \emph{abstract simplicial complex} over a finite set $V$ is a family $\SC$ of subsets of $V$ that is closed under the operation of taking subsets, and includes all singleton subsets.
The elements of $\SC$ of cardinality $k$ are called (\emph{abstract}) \emph{$(k-1)$-simplices};  the $0$-simplices are called \emph{vertices} of the complex. An \emph{isomorphism} of abstract simplicial complexes $\SC$ and $\SC'$ over the sets $V$ and $V'$, respectively, is a bijection $f\colon V \to V'$ that carries simplices to simplices, i.e., is such that $\{v_{1},\ldots,v_{u}\}\in \SC$  if, and only if, $\{f(v_{1}),\ldots,f(v_{u})\}\in \SC'$.

A \emph{weighted abstract simplicial complex} is a pair $(\SC,\omega)$, where $\SC$ is an abstract simplicial complex over $V$,
and $\omega$ is a weight function $\omega \colon V \to \{1,2,\dots\}$.
 An \emph{isomorphism} of abstract weighted simplicial complexes $(\SC,\omega)$ and $(\SC',\omega')$ over the sets $V$ and $V'$, respectively, is an isomorphism $f\colon V \to V'$ of $\SC$ and $\SC'$ that preserves weights, i.e., satisfies $\omega'\circ f=\omega$.

Abstract simplicial complexes provide combinatorial descriptions of polyhedra. For background on polyhedral geometry, please see
\cite{alexandrov, rs};  we will recall the basic notions. Let us consider the finite-dimensional real vector space
$\R^d$.
A \emph{convex combination} of a finite set of vectors $v_1,\ldots,v_u \in \R^d$
is any vector of the form
$\lambda_1v_1+\cdots+\lambda_uv_u$, for non-negative real numbers $\lambda_i
\geq 0$ satisfying $\sum_{i=1}^u\lambda_i=1$.
If $S\subseteq \R^d$ is any subset, we let $\conv{S}$ denote the \emph{convex
hull} of $S$, i.e., the collection of all convex combinations
of finite sets of vectors $v_1,\ldots,v_u \in S$. A \emph{polytope}  is any
subset of $\R^d$ of the form
$\conv{S}$, for some finite $S\subseteq \R^d$, and a (\emph{compact})
\emph{polyhedron}  is a  union of finitely many polytopes
in $\R^d$.
(Polyhedra need not be convex, in accordance with the  usage in \cite{rs}.)

To each abstract simplicial complex $\SC$ over the set $V=\{v_1,\ldots,v_d\}$ of vertices one associates a  polyhedron contained
in $\R^{d}$. Let $\{e_1,\ldots,e_{d}\}$ be the standard  basis of
$\R^{d}$.
For each abstract simplex $S=\{v_{i_1},\ldots, v_{i_{u}}\}\in \SC$, consider the polytope
\[
\widehat{S}=\conv{\{e_{i_1},\ldots, e_{i_{u}}\}}\subseteq\R^{d}
.
\]
Then the polyhedron
\[
P_{\SC}=\bigcup_{S\in\SC} \widehat{S}
\]
is called the  \emph{geometric realization} of $\SC$. We  adapt this construction to weighted complexes by rescaling the unit vector $e_{i}$ by a factor of $\frac{1}{w}$, if $w$ is the weight involved. Let $(\SC,\omega)$ be a weighted abstract simplicial complex over  the set
$V=\{v_1,\ldots,v_d\}$. For each abstract simplex $S=\{v_{i_1},\ldots, v_{i_{u}}\}\in \SC$, consider the polytope
\[
\overline{S}=\conv{\left\{\frac{e_{i_1}}{\omega(v_{i_1})},\ldots, \frac{e_{i_{u}}}{\omega(v_{i_u})}\right\}}\subseteq\R^{d}
.
\]
We call the polyhedron
\begin{align}\label{eq:nerve}
P_{\SC}^{\omega}=\bigcup_{S\in\SC} \overline{S}
\end{align}
the  \emph{geometric realization} of $(\SC,\omega)$.

A standard construction produces an abstract simplicial complex from a  poset. The \emph{nerve of a \textup{(}finite\textup{)} poset} $O$, denoted $\N{(O)}$, is the collection of
 totally ordered subsets of $O$.
 It is then clear that $\N{(O)}$ is an abstract simplicial complex; it is alternatively called the \emph{order complex of $O$}. For background and further references we refer to \cite{bjoerner}.
\begin{definition}\label{d:asspoly} Let $(W,\leq,R,M)$ be a Kleene space. Its \emph{associated weighted abstract simplicial complex} is defined as $(\N{(W)},\omega)$, where, for each
${w\in W}$, $\omega(w)=1$ if $w\in M$, and $\omega(w)=2$ otherwise. Further,
 its \emph{companion polyhedron} is defined as the geometric realization of $(\N{(W)},\omega)$ as in (\ref{eq:nerve}).
 (Note $(\N{(W)},\omega)$ does not depend on $R$.)
\end{definition}
Having constructed a polyhedron out of a Kleene space via its weighted nerve, we construct an MV-algebra out of the polyhedron. For background on the extensive interaction between MV-algebras and polyhedra, please see~\cite{mundicibis}.
Throughout, the adjective `linear' is to be understood as `affine linear'. A function $f \colon \R^d \to \R$ is \emph{piecewise linear} if it is continuous
(with respect to the
Euclidean topology on $\R^d$ and $\R$), and there is a finite set of
linear
functions $l_1,\ldots,l_u$ such that for each $x \in \R^d$ one has $f(x)=l_i(x)$
for some choice of $i=1,\ldots,u$.
If, moreover, each $l_i$ can be written as a linear polynomial with integer
coefficients, then $f$ is a  \emph{$\Z$-map}.
If $X\subseteq [0,1]^{d}\subseteq \R^{d}$ is a subspace contained in the unit cube $[0,1]^{d}$, a \emph{$\Z$-map} on $X$ is a function
$f\colon X \to \R$ that agrees with a restriction of some $\Z$-map $\R^{d}\to\R$ to $X$. We set
\begin{align}\label{eq:zmaps}
\McN{(X)}=\{f \colon X \to [0,1]
 \mid
 f \text{ is a $\Z$-map}\}
.
\end{align}
It is an exercise
to prove that  $\McN{(X)}$ is an MV-algebra when endowed  with operations defined pointwise from the standard MV-algebra $[0,1]$.
 Explicitly,
one defines, $(f\oplus g)(x)=\min{\{f(x)+g(x),1\}}$ and $(\neg f)(x)=1-f(x)$  for each $f,g\in\McN{(X)}$ and $x \in X$; the neutral element for $\oplus$ is the function constantly equal to $0$ on $X$.  We call $\McN{(X)}$ the \emph{MV-algebra of $\Z$-maps on $X$}.
\begin{theoremI}[The description problem]\label{t:main1}For each finite Kleene algebra $B$, the MV-algebra $F(B)$ freely generated by $B$ is isomorphic to the MV-algebra of $\Z$-maps on the  companion polyhedron of the Kleene space dual to $B$. In symbols, let $D(B)=
(W,\leq,R,M)$ be the Kleene space dual to $B$,  let $(\N{(W)}, \omega)$ be its associated weighted abstract simplicial complex, and let $P_{\N{(W)}}^{\omega}$ be its companion polyhedron  as in \textup{(\ref{eq:nerve})}. Then
\[
F(B) \cong \McN{(P_{\N{(W)}}^{\omega})}
.
\]
\end{theoremI}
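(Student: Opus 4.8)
The plan is to reduce $F(B)$ to an explicit quotient of a free MV-algebra, to represent that quotient polyhedrally, and then to recognise the resulting polyhedron as a $\Z$-homeomorphic copy of the companion polyhedron. Since $B$ is finite it is finitely generated, say by $n$ elements, so there is a surjection $q\colon\F_{n}^{\KK}\twoheadrightarrow B$; by Lemma~\ref{l:freeK} we may take $\F_{n}^{\KK}=E(\widetilde{K}^{n})$, and under the duality $q$ corresponds to the inclusion $\iota\colon W=D(B)\hookrightarrow\widetilde{K}^{n}$, with $\Theta:=\Ker q=\Ker E(\iota)$ and $W=\Sol_{\KK}(\Theta)$ by Lemma~\ref{Th:DualAsSolutionSetKleene}. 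Because $F$ is a left adjoint it carries this presentation to a presentation of $F(B)$; using $\F_{n}^{\MM}\cong\McN([0,1]^{n})$ together with~(\ref{e:freeover}), this gives $F(B)\cong\McN([0,1]^{n})/\widehat{u_{n}^{2}(\Theta)}$. Finally, by the polyhedral representation of finitely presented MV-algebras (Lemma~\ref{Th:DualAsSolutionSetMV}) the latter quotient is $\McN(Z)$, where
\[
Z=\{x\in[0,1]^{n}\mid u_{n}(f)(x)=u_{n}(g)(x)\text{ for all }(f,g)\in\Theta\}.
\]
Thus everything reduces to computing $Z$ and comparing $\McN(Z)$ with $\McN(P_{\N(W)}^{\omega})$.

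The geometric heart of the argument is a triangulation of the cube. Embedding $\widetilde{K}^{n}$ into $[0,1]^{n}$ by sending $w\in\{0,\m,1\}^{n}$ to the point $w$ with the same coordinates, I would show that the simplices $\conv\{\iota(w)\mid w\in C\}$, as $C$ ranges over the chains of $\widetilde{K}^{n}$, form a unimodular triangulation of $[0,1]^{n}$ --- concretely, the geometric realization of $\N(\widetilde{K}^{n})$ placed inside the cube (cf.\ Figure~\ref{fig:schauder} for $n=2$). Affine independence of the vertices of a chain follows from the shape of $\preceq$: along a chain each coordinate can change only once, switching from $\m$ to a value in $\{0,1\}$, so the successive difference vectors have pairwise disjoint nonempty supports. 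Counting maximal chains then shows these simplices tile the cube, and a direct determinant computation shows each is unimodular. I would also verify that each generator image $u_{n}(f)$, being a lattice combination of the projections $\pi_{i}$ and their negations $1-\pi_{i}$, is affine on every such simplex, since on each of them the mutual order of the numbers $\pi_{i}(x),1-\pi_{i}(x),0,\m,1$ is fixed, so that every min/max combination coincides there with one of them.

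Next I would prove that $Z$ is exactly the geometric realization, via $\iota$, of the subcomplex $\N(W)\subseteq\N(\widetilde{K}^{n})$, that is, $Z=\bigcup\{\conv\{\iota(w)\mid w\in C\}\mid C\in\N(W)\}$. The inclusion $\supseteq$ is immediate: on a simplex whose vertices lie in $W=\Sol_{\KK}(\Theta)$ any pair $u_{n}(f),u_{n}(g)$ with $(f,g)\in\Theta$ agrees at all vertices, hence on the whole simplex by affineness. For $\subseteq$ I would argue that a point in the relative interior of a simplex $\conv\{\iota(w)\mid w\in C\}$ can lie in $Z$ only if $C\subseteq W$: writing such a point as $\sum_{w\in C}\lambda_{w}\iota(w)$ with all $\lambda_{w}>0$, membership in $Z$ forces $\sum_{w\in C}\lambda_{w}(f(w)-g(w))=0$ for every $(f,g)\in\Theta$, and it suffices to produce, for each $w^{*}\in C\setminus W$, a pair $(f,g)\in\Theta$ agreeing at all vertices of $C$ other than $w^{*}$ while differing at $w^{*}$. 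Such pairs can be manufactured from the separating morphisms built in the proof of Lemma~\ref{Th:DualAsSolutionSetKleene}. I expect this to be the main obstacle: the naive single-pair separation may be defeated by cancellation among several vertices of $C$ lying outside $W$, so the separating morphisms must be arranged to eliminate exactly one extra vertex at a time, exploiting that $C\setminus W$ is itself a chain, hence totally ordered and amenable to an inductive peeling-off argument.

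With $Z$ identified, the conclusion is a routine comparison of two geometric realizations of the same abstract complex $\N(W)$. I would define the simplexwise-affine map $\Phi\colon P_{\N(W)}^{\omega}\to Z$ by $\frac{e_{w}}{\omega(w)}\mapsto\iota(w)$. By Definition~\ref{d:asspoly} the weight $\omega(w)$ equals the denominator $\den(\iota(w))$ --- it is $1$ exactly when $w\in M$, i.e.\ when all coordinates of $\iota(w)$ lie in $\{0,1\}$, and $2$ otherwise --- so $\Phi$ matches denominators vertex by vertex; since both the source simplices (by the design of the weighted realization~(\ref{eq:nerve})) and the target simplices (by the previous step) are unimodular, $\Phi$ is a $\Z$-homeomorphism. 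Hence $\McN(P_{\N(W)}^{\omega})\cong\McN(Z)\cong F(B)$, as claimed.
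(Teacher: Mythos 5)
Your overall route is the paper's: present $B$ as $E(\widetilde{K}^{n})/\Theta$, use the duality to identify $D(B)$ with $W=\Sol_{\KK}{(\Theta)}\subseteq\{0,\m,1\}^{n}$, pass to $F(B)\cong\McN_{n}/\widehat{u_{n}^{2}(\Theta)}\cong\McN{(Z)}$ with $Z=\Sol_{\MM}{(u_{n}^{2}(\Theta))}$ via the Chang--McNaughton and Hay--W\'ojcicki theorems, identify $Z$ as the union of the simplices of the Kleene triangulation spanned by chains of $W$, and conclude by matching weighted complexes (your explicit simplexwise-affine map is exactly what Lemma~\ref{l:isocriterion} packages, so that last step is fine). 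The one genuine gap is the step you yourself flag as ``the main obstacle'': the inclusion $Z\subseteq\bigcup\{\conv{\{\iota(w)\mid w\in C\}}\mid C\in\N{(W)}\}$. Your plan --- for each $w^{*}\in C\setminus W$ produce a pair $(f,g)\in\Theta$ agreeing on $C\setminus\{w^{*}\}$ and differing at $w^{*}$ --- is not carried out, and it is unclear how to arrange such a pair: membership in $\Theta=\Ker{E(\iota)}$ only controls the values on $W$, so you have no leverage over the other vertices of $C\setminus W$, and the proposed ``inductive peeling-off'' along the chain $C\setminus W$ is left entirely to the reader. As written, the central inclusion is unproved.

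There are two ways to close it. The paper's way (Lemmas~\ref{lemma:tot_order_simplex_Sn} and~\ref{Cor:SimplexSolution}) avoids separating pairs altogether: on each $\sigma\in\mathcal{S}_{n}$ every element of the Kleene subalgebra generated by the projections coincides with $0$, $1$, some $\pi_{i}$ or some $1-\pi_{i}$, and two such functions agreeing at a single point of $\rel{\sigma}$ agree on all of $\sigma$; hence $\rel{\sigma}\cap Z\neq\emptyset$ already forces $\sigma\subseteq Z$, whence $\ver{\sigma}\subseteq Z\cap\{0,\m,1\}^{n}=\Sol_{\KK}{(\Theta)}=W$ by Lemma~\ref{Th:KleeneMVSolutions}. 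Alternatively, your own route can be completed without any induction once you observe that the separating pair $(f,g)$ constructed in the proof of Lemma~\ref{Th:DualAsSolutionSetKleene} satisfies $f\leq g$ pointwise as $[0,1]$-valued functions (if $f(y)=1$ then $w^{*}\preceq_{n}y$ by transitivity, so $g(y)=1$; if $f(y)=\m$ then $y\notin\{0,1\}^{n}$, so $g(y)\geq\m$; the case $f(y)=0$ is trivial). Consequently every summand of $\sum_{w\in C\setminus W}\lambda_{w}\bigl(f(w)-g(w)\bigr)=0$ is nonpositive, and since all $\lambda_{w}>0$ each summand must vanish --- contradicting $f(w^{*})\neq g(w^{*})$. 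Either repair makes your argument complete.
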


 The proof  of Theorem I will require several lemmas. For an integer $n\geq 1$, we write $[0,1]^{n}$ to denote the product of the unit interval $[0,1]\subseteq \R$, endowed with the product topology. (The topology on $[0,1]$ is always the Euclidean subspace  topology inherited from $\R$.) We further write $\McN_{n}$ as a shorthand for $\McN{([0,1]^{n})}$.
\begin{lemma}[The Chang-McNaughton Theorem]\label{l:freerep}For each integer $n\geq 1$, the MV-algebra $\McN_{n}$ is the free MV-algebra over $n$ generators. A free generating set of $\McN_{n}$ is given by the projection maps $\pi_{i}\colon [0,1]^{n}\to [0,1]$, $i=1,\ldots,n$.
\end{lemma}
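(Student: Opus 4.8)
The plan is to exhibit a canonical homomorphism from the abstractly defined free MV-algebra onto $\McN_n$ and to prove it is an isomorphism carrying free generators to projections. Write $\F_n^{\MM}$ for the free MV-algebra on generators $X_1,\ldots,X_n$. Since each projection $\pi_i$ is manifestly a $\Z$-map, we have $\pi_i\in\McN_n$, so by the universal property of $\F_n^{\MM}$ there is a unique MV-homomorphism $\epsilon\colon\F_n^{\MM}\to\McN_n$ with $\epsilon(X_i)=\pi_i$ for each $i$. It then suffices to prove that $\epsilon$ is bijective, for this shows at once both that $\McN_n$ is free and that $\{\pi_1,\ldots,\pi_n\}$ is a free generating set.

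For injectivity I would combine pointwise evaluation with Chang's completeness theorem. An element of $\F_n^{\MM}$ is represented by an MV-term $t(X_1,\ldots,X_n)$, and $\epsilon(t)$ is the function sending $r=(r_1,\ldots,r_n)\in[0,1]^n$ to the value $t^{[0,1]}(r_1,\ldots,r_n)$ obtained by interpreting $t$ in the standard MV-algebra $[0,1]$ with $X_i$ assigned $r_i$. Hence $\epsilon(t)=\epsilon(s)$ means precisely that the equation $t=s$ holds in $[0,1]$ under every valuation. By Chang's completeness theorem $[0,1]$ generates $\MM$ \cite[2.5.3]{cdm}, so $t=s$ then holds in every MV-algebra, whence $t=s$ in $\F_n^{\MM}$. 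Thus $\epsilon$ is injective.

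For surjectivity I must show that every $\Z$-map $[0,1]^n\to[0,1]$ lies in the image of $\epsilon$, i.e., is a term in the projections. The image of $\epsilon$ is exactly the MV-subalgebra of $\McN_n$ generated by $\pi_1,\ldots,\pi_n$; a routine induction on the structure of terms (the projections are $\Z$-maps, and $\oplus$, $\neg$, and $0$ preserve the property of being a $\Z$-map) gives the easy inclusion of this image into $\McN_n$. The reverse inclusion---that every continuous piecewise linear function with integer coefficients arises from a term---is the genuine obstacle, and is precisely the content of McNaughton's theorem: it rests on the nontrivial geometric fact that any such function can be synthesised from the coordinate functions using $\oplus$, $\neg$, and the induced lattice operations. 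Rather than reprove it, I would invoke McNaughton's theorem in the form recorded in \cite{cdm} (see also \cite{mundicibis}). Combining injectivity with surjectivity shows that $\epsilon$ is an isomorphism, which proves the lemma.
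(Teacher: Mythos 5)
Your proposal is correct and follows the standard argument (evaluation homomorphism, Chang's completeness theorem for injectivity, McNaughton's theorem for surjectivity), which is precisely the content of the result the paper cites: its entire proof is the reference \cite[9.1.5]{cdm}. Since you too ultimately defer the hard surjectivity step to McNaughton's theorem as recorded in \cite{cdm}, your route is essentially the same as the paper's, just written out in more detail.
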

\begin{proof}This is \cite[9.1.5]{cdm}.
\end{proof}
For an integer $n\geq 1$, given any subset $\Theta\subseteq \McN^{2}_{n}$ we define:
\[
\Sol_{\MM}{(\Theta)}=\{v\in [0,1]^{n} \mid f(v)=g(v) \mbox{ for each }(f,g)\in \Theta\}.
\]
\begin{lemma}[The Hay-W\'ojcicki Theorem]\label{Th:DualAsSolutionSetMV}Fix an integer $n\geq 1$, and a finite subset $\Theta\subseteq \McN^{2}_{n}$. If $\widehat{\Theta}$ denotes the congruence on $\McN_{n}$ generated by $\Theta$, we have:
\[
\McN_{n}/\widehat{\Theta}\cong \McN{(\Sol_{\MM}{(\Theta)})}
.
\]
\end{lemma}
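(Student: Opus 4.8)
The plan is to realize the isomorphism as the map induced by \emph{restriction of functions}. Write $S=\Sol_{\MM}{(\Theta)}\subseteq[0,1]^{n}$ and define $\rho\colon\McN_{n}\to\McN{(S)}$ by $\rho(f)=f|_{S}$. Since the MV-operations on both algebras are computed pointwise from the standard algebra $[0,1]$, the map $\rho$ is an MV-homomorphism. It is surjective essentially by definition: by the notion of a $\Z$-map on a subspace recalled in \eqref{eq:zmaps}, every $h\in\McN{(S)}$ is the restriction to $S$ of some $\Z$-map $H\colon\R^{n}\to\R$; composing $H$ with the truncation $t\mapsto (t\vee 0)\wedge 1$, which is itself a $\Z$-map, and restricting to $[0,1]^{n}$ produces a preimage of $h$ in $\McN_{n}$. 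Thus $\rho$ is a surjective homomorphism, and by the homomorphism theorem it remains only to prove $\Ker\rho=\widehat{\Theta}$.

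One inclusion is immediate: every $(f,g)\in\Theta$ satisfies $f|_{S}=g|_{S}$ by the definition of $S$, so $\Theta\subseteq\Ker\rho$; as $\Ker\rho$ is a congruence, $\widehat{\Theta}\subseteq\Ker\rho$. For the converse I would pass to the classical bijection between congruences and ideals of an MV-algebra \cite{cdm}. Under this bijection $\Ker\rho$ corresponds to the ideal $I_{S}=\{f\in\McN_{n}\mid f|_{S}=0\}$ of maps vanishing on $S$, while $\widehat{\Theta}$ corresponds to the ideal generated by the finitely many distance terms $d_{f,g}=(f\odot\neg g)\oplus(g\odot\neg f)$ for $(f,g)\in\Theta$, whose pointwise value is $|f-g|$. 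Setting $D=\bigvee_{(f,g)\in\Theta}d_{f,g}$, a single element of $\McN_{n}$ because $\Theta$ is finite, this ideal equals the principal ideal $\langle D\rangle=\{f\mid f\leq mD \text{ for some integer } m\geq 1\}$ (using $d_{f,g}\leq D\leq\bigoplus d_{f,g}$), and one checks $S=D^{-1}(0)$. The easy inclusion $\langle D\rangle\subseteq I_{S}$ holds because $D$ vanishes on $S$. Everything therefore reduces to proving $I_{S}\subseteq\langle D\rangle$.

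This last inclusion is the crux, and amounts to a Positivstellensatz for $\Z$-maps: if $f\geq 0$ vanishes on the zero-set of $D\geq 0$, then $f\leq mD$ for some integer $m$. To prove it I would fix a simplicial complex triangulating $[0,1]^{n}$ on which both $f$ and $D$ are affine linear on each simplex; such a common triangulation exists because $f$ and $D$ are piecewise linear. On a simplex $\sigma$ the function $f-cD$ is affine for every real $c$, hence attains its maximum over $\sigma$ at a vertex, so $f\leq cD$ holds on $\sigma$ as soon as it holds at each vertex $w$ of $\sigma$. At such a vertex either $D(w)>0$, in which case $f(w)\leq cD(w)$ for any $c\geq f(w)/D(w)$, or $D(w)=0$, in which case $w\in S$ forces $f(w)=0$ and the inequality is trivial. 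Taking $c$ to be the maximum of the finitely many ratios $f(w)/D(w)$ over vertices $w$ with $D(w)>0$ yields $f\leq cD$ on all of $[0,1]^{n}$; choosing an integer $m\geq c$ and using $f\leq 1$ then gives $f\leq\min\{mD,1\}$, which is exactly the value of $mD$ in $\McN_{n}$, so $f\in\langle D\rangle$. Hence $I_{S}=\langle D\rangle$, the two ideals coincide, and therefore $\Ker\rho=\widehat{\Theta}$, which completes the proof. I expect the PL-Positivstellensatz step to be the main obstacle; its delicate points are securing the common triangulation of the two piecewise linear functions and correctly handling the truncation of $mD$ at the value $1$.
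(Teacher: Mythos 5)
Your proposal is correct. The paper itself offers no argument here: it simply cites \cite[3.6.9]{cdm}, so there is nothing in the text to compare line by line. What you have written is, in substance, the standard proof of the cited result, and it is sound. The decomposition into (a) the restriction homomorphism $\rho$ being surjective onto $\McN(\Sol_{\MM}(\Theta))$ by the very definition of $\Z$-maps on a subspace, (b) the congruence--ideal correspondence reducing $\Ker\rho=\widehat{\Theta}$ to an equality of ideals $I_{S}=\langle D\rangle$ with $D=\bigvee_{(f,g)\in\Theta}d_{f,g}$, and (c) the piecewise-linear Positivstellensatz $I_{S}\subseteq\langle D\rangle$ via a common triangulation and a vertex-by-vertex estimate, is exactly how Wójcicki's theorem is established. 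Your vertex argument is complete: affineness of $f-cD$ on each simplex pushes the inequality from vertices to the whole simplex, the case $D(w)=0$ is absorbed by $w\in S$, and the truncation issue with $mD$ is correctly dismissed using $f\leq 1$. Two small merits worth noting: your argument makes visible precisely where finiteness of $\Theta$ enters (forming the single element $D$ and taking a maximum over finitely many vertices), which is the hypothesis whose removal the paper's subsequent Remark warns against; and it covers the degenerate case $S=\emptyset$ automatically, since then $D$ is bounded away from $0$ on the compact cube and $\langle D\rangle$ is improper. The only point you should state rather than assume is the existence of a common (rational) triangulation of $[0,1]^{n}$ linearizing both $f$ and $D$ simultaneously; this is standard PL topology (common refinement of the two cell decompositions, then triangulate), and the paper's references \cite{rs,mundicibis} supply it.
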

\begin{proof}This is, in essence, \cite[3.6.9]{cdm}.
\end{proof}
\begin{remarknonum}Caution: if one drops the finiteness condition from the statement of the Hay-W\'ojcicki Theorem, the resulting assertion is false, because there exist finitely generated non-semisimple MV-algebras. See  \cite[3.6 and 4.6]{cdm} for more details.\qed\end{remarknonum}

Given an integer $n\geq 1$, let $\rho_i\colon \{0,\m,1\}^{n}\to \{0,\m,1\}$ and $\pi_i\colon [0,1]^{n}\to [0,1]$ denote the respective projection maps onto the $i^{\rm th}$ coordinate, $i=1,\ldots,n$. By Lemma \ref{l:freeK}, $\{\rho_i\}_{i=1}^{n}$ generates the Kleene algebra $E(\widetilde{K}^{n})$ freely. Hence there exists a unique Kleene homomorphism
\begin{align*}
u_n\colon E(\widetilde{K}^n)\longrightarrow \McN_{n}
\end{align*}
that extends the assignment
\[
\rho_{i}\longmapsto \pi_{i} 
, 
i=1,\ldots,n.
\]
Let us write
\begin{align}\label{eq:un2}
u_n^{2}\colon E(\widetilde{K}^n)^{2} \longrightarrow \McN_{n}^{2}
\end{align}
for the product map of $u_{n}$ with itself given by
\[
(f,g)\in E(\widetilde{K}^n)^2
\longmapsto 
 (u_{n}(f),u_{n}(g)) \in\McN_{n}^{2}
.
\]
Now $K=\{0,\m,1\}$ is a Kleene subalgebra of the Kleene reduct of the standard MV-algebra $[0,1]$, and, by \cite[Lemma 2]{kalman}, $K$ generates the variety $\KK$. This shows that $u_n$ is a one-one map, and proves the following result that allows us to express  $\Sol_{\KK}{(\cdot)}$ in terms of $\Sol_{\MM}{(\cdot)}$:
\begin{lemma}\label{Th:KleeneMVSolutions}Fix an integer $n\geq 1$.
For any subset $\Theta\subseteq E(\widetilde{K}^{n})^{2}$ we have
\[
\textstyle\Sol_{\KK}{(\Theta)}=
\Sol_{\MM}{(u_n^{2}(\Theta))}\cap\left\{0,\m,1\right\}^{n}
,
\]
where $u_{n}^{2}$ is as in \textup{(\ref{eq:un2})}.\qed
\end{lemma}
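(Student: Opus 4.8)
The plan is to reduce the claimed set equality to a single pointwise identity: for every $f\in E(\widetilde{K}^n)$ and every $v\in\{0,\m,1\}^n$, one has $u_n(f)(v)=f(v)$, where on the left $v$ is regarded as a point of $[0,1]^n$ and on the right $f$ is evaluated as the Kleene-space morphism that it is. Once this identity is available, both inclusions defining the lemma fall out by inspection.

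First I would establish the identity. By Lemma~\ref{l:freeK}, the Kleene algebra $E(\widetilde{K}^n)$ is freely generated by the projections $\rho_1,\dots,\rho_n$, so there is a Kleene term $t$ with $f=t^{E(\widetilde{K}^n)}(\rho_1,\dots,\rho_n)$. Since the operations of $E(\widetilde{K}^n)$ are computed coordinatewise from $K$, evaluating at $v=(v_1,\dots,v_n)$ gives $f(v)=t^{K}(v_1,\dots,v_n)$. On the other hand $u_n$ is a Kleene homomorphism with $u_n(\rho_i)=\pi_i$, so it commutes with $t$ and $u_n(f)=t^{\McN_n}(\pi_1,\dots,\pi_n)$; since the operations of $\McN_n$ are computed pointwise from the Kleene reduct of $[0,1]$, evaluating at $v$ gives $u_n(f)(v)=t^{[0,1]}(v_1,\dots,v_n)$, the term $t$ interpreted in that Kleene reduct. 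The key point is now that $K$ is a Kleene subalgebra of the Kleene reduct of $[0,1]$ and $v_1,\dots,v_n\in K$, so the interpretation of $t$ in $K$ and in $[0,1]$ agree on these arguments, yielding $f(v)=u_n(f)(v)$.

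With the identity in hand the two inclusions are immediate. If $v\in\Sol_{\KK}(\Theta)$ then $v\in\{0,\m,1\}^n$ and $f(v)=g(v)$ for every $(f,g)\in\Theta$; applying the identity twice gives $u_n(f)(v)=f(v)=g(v)=u_n(g)(v)$, so $v\in\Sol_{\MM}(u_n^2(\Theta))\cap\{0,\m,1\}^n$. Conversely, if $v$ lies in the right-hand side then $v\in\{0,\m,1\}^n$ and $u_n(f)(v)=u_n(g)(v)$ for all $(f,g)\in\Theta$, whence $f(v)=u_n(f)(v)=u_n(g)(v)=g(v)$ and $v\in\Sol_{\KK}(\Theta)$.

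I expect no serious obstacle here: the entire content sits in the pointwise identity, which in turn rests on the single structural fact that $K$ embeds as a Kleene subalgebra of the Kleene reduct of $[0,1]$, so that a Kleene term takes the same value in $K$ and in $[0,1]$ on arguments drawn from $\{0,\m,1\}$. The only care needed is bookkeeping: distinguishing the Kleene-space morphism $f$ (an element of the dual) from the term realizing it, and checking that term operations genuinely are computed coordinatewise in both $E(\widetilde{K}^n)$ and $\McN_n$. Neither point is difficult.
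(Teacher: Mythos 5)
Your proposal is correct and matches the paper's argument: the paper proves this lemma in the paragraph immediately preceding its statement, resting on exactly the structural fact you identify, namely that $K=\{0,\frac{1}{2},1\}$ is a Kleene subalgebra of the Kleene reduct of $[0,1]$, so that Kleene terms evaluate identically in both on arguments from $\{0,\frac{1}{2},1\}^{n}$ and hence $u_{n}(f)(v)=f(v)$ there. Your write-up simply makes explicit the term-manipulation bookkeeping that the paper leaves implicit.
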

%
 Conversely, in Lemma \ref{Cor:SimplexSolution} we will present a way to express $\Sol_{\MM}{(\cdot)}$ in terms of $\Sol_{\KK}{(\cdot)}$. To do so we need to introduce some further background on rational polyhedral geometry.

A polytope  that may be written as $\sigma=\conv{S}$, for
$S=\{v_0,v_1,\ldots,v_u\}$ a finite set of affinely independent
vectors in $\R^{d}$, is a \emph{\textup{(}$u$-dimensional\textup{)} simplex}, or
a \emph{$u$-simplex} for short; $S$ is then
the (uniquely determined) set of
\emph{vertices} of~$\sigma$, denoted $\ver{\sigma}$. The simplex $\sigma$ is \emph{rational} if $\ver{\sigma}
\subseteq \Q^d$.
A \emph{\textup{(}$w$-dimensional\textup{)} face} of $\sigma$ is any simplex of
the form $\conv{S'}$, for $S'\subseteq \ver{\sigma}$ a set of cardinality
$w+1$.
A \emph{\textup{(}rational\textup{)} simplicial complex} in $\R^d$ is a
finite collection $\Sigma$ of \textup{(}rational\textup{)} simplices in $\R^d$
such that any two simplices in $\Sigma$ intersect
in a common face. (This includes the case that the two simplices are disjoint:
then, and only then, they intersect in
$\emptyset$, their unique common  $(-1)$-dimensional face.) The \emph{dimension}
of $\Sigma$ is the maximum of the dimensions of its simplices. The simplices of $\Sigma$ having dimension $0$ are its
\emph{vertices}.
The \emph{support}, or \emph{underlying polyhedron}, of $\Sigma$ is
$|\Sigma|=\bigcup_{\sigma\in\Sigma}\sigma$. It indeed is a (rational) polyhedron,
by definition. Conversely, it is a basic fact that every (rational) polyhedron
$P$ is the support of some (rational) simplicial complex $\Sigma$, called a \emph{triangulation} of $P$;
see e.g., \cite[2.11]{rs}. Given any polyhedron $P\subseteq \R^{d}$, its \emph{relative interior} $\rel{P}$ is its topological interior
in the unique dimension-minimal affine subspace of $\R^{d}$ that contains $P$, endowed with the subspace topology inherited from $\R^{d}$. When
$P$ is a simplex $\sigma$ with vertices $v_{0},\ldots, v_{u}$, then $\rel{\sigma}$ consists of those convex combinations $\sum_{i=0}^{u}\lambda_{i}v_{i}$ such that $\lambda_{i}>0$ for each $i=0,\ldots,u$.

\begin{definition}\label{d:kleenet}Fix an integer $n\geq 1$.
Let
\[
   \mathcal{S}_n=\{\conv{C}
\mid
C\subseteq \{  0,\m,1\}^n \text{ and $C$ is a chain of }(\{0,\m,1\}^n,\preceq_n)\}
.
\]
An elementary computation shows that $\mathcal{S}_n$ is a  rational triangulation of $[0,1]^{n}$, called the \emph{Kleene triangulation} of the $n$-dimensional cube.
\end{definition}

A picture of the Kleene triangulation of $[0,1]^{2}$ is shown in Figure \ref{fig:standard}. We will shortly see that the Kleene triangulation enjoys a stronger property than mere rationality, known as regularity. This, however, is not needed for our next two results.
\begin{lemma}\label{lemma:tot_order_simplex_Sn}
   Fix an integer $n\geq 1$. For each simplex $\sigma\in\mathcal{S}_n$, there is a permutation
   $p\colon \{1,\ldots, n\}\rightarrow\{1,\ldots, n\}$ and functions
   $e\colon\{0,\ldots , n\}\rightarrow\{\geq,=\}$ and
   $s\colon\{1,\ldots, n\}\rightarrow\{-,+\}$ such that $\sigma$ is
   the solution set in $[0,1]^{n}$ of the system of equations and inequalities
   \begin{equation}\label{eq:eqineqsystem}
   \begin{cases}
    \m\, e(0)\, x_{p(1)}^{s(1)}\\
    x_{p(i)}^{s(i)}\, e(i)\, x_{p(i+1)}^{s(i+1)}&
\mbox{for each }i\in\{1,\ldots,n-1\}\\
    x_{p(n)}^{s(n)}\, e(n)\, 1
   \end{cases}
   \end{equation}
   where $(x_1,\ldots,x_n)\in [0,1]^{n}$ and
   $x_i^{+}=x_i$ and $x_i^{-}=1-x_i$.
   In particular, if
  ${f\in\McN_{n}}$ lies in the Kleene algebra generated by
   $\pi_1,\ldots,\pi_n, 0, 1$, where $\pi_i\colon[0,1]^{n}\rightarrow [0,1]$ is the projection map onto the $i^{\rm th}$ coordinate,
   then either~$f$ is constantly zero or constantly one over
   $\sigma$, or there is an index $1\leq i\leq n$ such that $f$ coincides
   over $\sigma$ either with $\pi_i$ or with $1-\pi_i$.
\end{lemma}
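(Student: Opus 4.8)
The plan is to reduce everything to an explicit analysis of the barycentric coordinates on $\sigma=\conv C$, where $C=\{c_{0}\prec_{n}c_{1}\prec_{n}\cdots\prec_{n}c_{k}\}$ is the chain of $(\{0,\m,1\}^{n},\preceq_{n})$ that defines $\sigma$. The starting observation is that, since the only strict relations of $\preceq$ are $\m\prec 0$ and $\m\prec 1$, every coordinate is a monotone ``staircase'' along the chain: for each $j$ the sequence $(c_{0})_{j}\preceq\cdots\preceq(c_{k})_{j}$ equals $\m$ up to some activation index $a_{j}$ (the least $m$ with $(c_{m})_{j}\neq\m$; put $a_{j}=\infty$ if $j$ is \emph{free}, i.e.\ constantly $\m$, and $a_{j}=0$ if already $\neq\m$ at $c_{0}$) and is thereafter constantly equal to a target $\epsilon_{j}\in\{0,1\}$. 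First I would choose the sign $s(j)\in\{+,-\}$ so that $\epsilon_{j}^{\,s(j)}=1$ (that is $s(j)=+$ if $\epsilon_{j}=1$ and $s(j)=-$ if $\epsilon_{j}=0$, either sign for a free coordinate) and set $\xi_{j}:=\pi_{j}^{s(j)}$. This choice uniformly turns both possible targets into the value $1$, so that $\xi_{j}(c_{m})=\m$ for $m<a_{j}$ and $\xi_{j}(c_{m})=1$ for $m\geq a_{j}$.

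Evaluating on a convex combination $x=\sum_{m}\lambda_{m}c_{m}$ then yields the single clean identity $\xi_{j}(x)=1-\m\,L_{j}$, with $L_{j}=\sum_{m<a_{j}}\lambda_{m}\in[0,1]$, and the whole statement falls out of it. Indeed $\xi_{j}(x)$ depends only on $a_{j}$, is non-increasing in $a_{j}$, equals $1$ exactly when $a_{j}=0$, equals $\m$ exactly for free coordinates, and coincides for coordinates activating at the same step. Ordering the coordinates by decreasing activation index gives the permutation $p$ (free coordinates first, those active at $c_{0}$ last), so that $\m\leq\xi_{p(1)}\leq\cdots\leq\xi_{p(n)}\leq 1$ pointwise on $\sigma$; grouping coordinates that activate simultaneously and recording ``$=$'' within a group, the order relation between groups, and at the two ends ``$=$'' according to whether the extreme coordinate is free (resp.\ active at $c_{0}$), produces exactly the functions $e(0),\dots,e(n)$, so that the displayed system asserts precisely this chain. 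This proves that every point of $\sigma$ solves the system. For the converse I would start from any $x\in[0,1]^{n}$ solving the system, read off the common value $t_{m}$ of the $\xi$'s on each activation group, and \emph{reconstruct} barycentric coordinates by inverting $L_{j}=2(1-\xi_{j}(x))$: prescribing the partial sums $\lambda_{0}+\cdots+\lambda_{m-1}:=2(1-t_{m})$ for each activation index $m$ forces each $\lambda_{m}$ to be a difference of consecutive $t$'s. The monotonicity and endpoint equalities imposed by the system are exactly what makes every $\lambda_{m}\geq 0$ and $\sum_{m}\lambda_{m}=1$; the resulting $y=\sum_{m}\lambda_{m}c_{m}$ satisfies $\xi_{j}(y)=\xi_{j}(x)$ for all $j$, whence $x=y\in\sigma$ since $\xi_{j}$ determines $x_{j}$.

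For the final assertion I would use that the defining inequalities hold pointwise throughout $\sigma$, so on $\sigma$ one has the pointwise chain
\[
0\leq 1-\xi_{p(n)}\leq\cdots\leq 1-\xi_{p(1)}\leq\m\leq\xi_{p(1)}\leq\cdots\leq\xi_{p(n)}\leq 1 .
\]
As $i,j$ range one has $\{\xi_{p(i)},\,1-\xi_{p(i)}\}=\{\pi_{j},\,1-\pi_{j}\}$, so the set $\mathcal C=\{0,1\}\cup\{\pi_{j}|_{\sigma},(1-\pi_{j})|_{\sigma}\}$ is totally ordered as a set of functions on $\sigma$. Being pairwise comparable, $\mathcal C$ is closed under $\wedge$ and $\vee$ (which return the pointwise minimum and maximum, hence one of the two arguments), it is closed under $\neg=1-(\cdot)$, and it contains $0,1$; thus $\mathcal C$ is a Kleene subalgebra of $\McN(\sigma)$ containing every $\pi_{i}|_{\sigma}$. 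Since restriction $f\mapsto f|_{\sigma}$ is an MV-, hence Kleene-, homomorphism $\McN_{n}\to\McN(\sigma)$, the image of the Kleene algebra generated by $\pi_{1},\dots,\pi_{n},0,1$ is contained in $\mathcal C$, so for such an $f$ the restriction $f|_{\sigma}$ is one of $0,1,\pi_{i}|_{\sigma},(1-\pi_{i})|_{\sigma}$, as claimed.

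The hard part will be the reverse inclusion in the first half, together with the bookkeeping that assigns $p$, $s$, and the equality/inequality pattern $e$ consistently: one must check that the ``$=$'' entries of $e$ match exactly the coordinates activating at a common step and the free (resp.\ already active) coordinates at the two ends, and verify nonnegativity of the reconstructed $\lambda_{m}$ and $\sum_{m}\lambda_{m}=1$ in every degenerate situation (free coordinates present, $c_{0}$ not the global minimum $(\m,\dots,\m)$, and lower-dimensional $\sigma$ coming from non-maximal chains). Once the staircase identity $\xi_{j}(x)=1-\m\,L_{j}$ is in hand, the forward inclusion and the ``in particular'' clause are short.
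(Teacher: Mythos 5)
Your proposal is correct and follows essentially the same route as the paper's proof: normalize each coordinate by a sign $s(j)$ so that it takes values in $\{\m,1\}$ along the defining chain, order the coordinates by activation index to obtain $p$, and read off the equality/inequality pattern $e$; your identity $\xi_j(x)=1-\m\, L_j$ and the reconstruction of barycentric coordinates from the partial sums $2(1-\xi_j(x))$ supply precisely the verification that the paper compresses into ``it follows easily.'' Your totally-ordered-subalgebra argument for the final clause is likewise the intended content of the paper's ``the rest is clear,'' so the only divergence is the harmless reversal of the convention for $p$ (you put the smallest $\xi$ first).
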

\begin{proof}
  Let $v_u \prec_{n} \cdots \prec_{n} v_1$ be the vertices of $\sigma$,
  where the order is the strict order derived from $\preceq_{n}$.
  Let $t\colon\{1, \ldots , n\}\rightarrow\{-,+\}$ be defined by
  \[
    t(i)=\begin{cases}
      - &\mbox{if }\pi_i(v_1)=0;\\
      + &\mbox{otherwise.}
    \end{cases}
  \]
  For each $i = 1,\ldots, n$, consider
  \[
  \textstyle  c_i=(\pi_i^{t(i)}(v_1),\ldots,\pi_i^{t(i)}(v_u))\in\{0,\m, 1\}^u
.
  \]
  Any chain  of $(\{0,\m,1\}^n,\preceq_n)$  is easily seen to have at most $n+1$ elements. Further, one can check that a chain of length $n+1$ must have
  the element $(\frac{1}{2},\ldots,\frac{1}{2})$ as minimum.
  Since $\{v_1,\ldots, v_u\}$ is a chain of $(\{0,\m,1\}^n,\preceq_n)$, it follows that there exists $j\in\{1,\ldots, n+1\}$ such that,
  for each $k \in \{1,\ldots,u\}$, $\pi_i^{t(i)}(v_k)=\m$ for $1 \leq k\leq j$, and $\pi_i^{t(i)}(v_k)=1$ for $j < k \leq u$. Then $c_i\in\{\m,1\}^{u}$, and there
  is a permutation $p\colon \{1,\ldots, n\}\rightarrow \{1,\ldots, n\}$ such that
  \[
     (1,\ldots,1)\geq c_{p(1)} \geq c_{p(2)} \geq \cdots  \geq c_{p(n)} \geq \textstyle (\m,\ldots,\m),
   \]
where  $\geq$ is the coordinatewise order of $\{\m,1\}^u$. Those inequalities that actually
hold with equality determine a map $e\colon \{0,\ldots, n\}\rightarrow\{\geq, =\}$. Letting $s=t\circ p$,
it  follows easily that $\sigma$ is the solution set of (\ref{eq:eqineqsystem})  in $[0,1]^{n}$. The rest is clear.
\end{proof}

\begin{lemma}\label{Cor:SimplexSolution}Fix an integer $n\geq 1$, a subset
  $\Theta\subseteq E(\widetilde{K}^{n})^{2}$, and a simplex $\sigma\in\mathcal{S}_n$. With $u_{n}^{2}$ as  in \textup{(\ref{eq:un2})}, we have
 \[
  \ver{\sigma}\subseteq \Sol_{\KK}{(\Theta)}\ 
\mbox{ iff }\ 
\sigma\subseteq \Sol_{\MM}{(u_n^{2}(\Theta))}\ 
\mbox{ iff }\ 
\rel{\sigma}\cap \Sol_{\MM}{(u_n^{2}(\Theta))}\neq\emptyset
.
 \]
Therefore, the set
\[
\Sigma_{\Theta}=\{\,\sigma\in \mathcal{S}_n \mid
 \ver{\sigma}\subseteq \Sol_{\KK}{(\Theta)}\}
\]
is a rational simplicial complex in $[0,1]^{n}$ that triangulates $\Sol_{\MM}{(u_n^{2}(\Theta))}$, i.e., satisfies
\[
     \Sol_{\MM}{(u_n^{2}(\Theta))}=|\Sigma_{\Theta}|
.
\]
\end{lemma}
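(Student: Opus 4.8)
The plan is to prove the three conditions (a)~$\ver{\sigma}\subseteq\Sol_{\KK}(\Theta)$, (b)~$\sigma\subseteq\Sol_{\MM}(u_n^{2}(\Theta))$, and (c)~$\rel{\sigma}\cap\Sol_{\MM}(u_n^{2}(\Theta))\neq\emptyset$ equivalent via the cycle (a)$\Rightarrow$(b)$\Rightarrow$(c)$\Rightarrow$(a), and then to read off the assertion about $\Sigma_{\Theta}$. The structural input used throughout is Lemma~\ref{lemma:tot_order_simplex_Sn}: since $u_{n}$ sends the free generators $\rho_{i}$ to the projections $\pi_{i}$, every $u_{n}(f)$ (for $f\in E(\widetilde{K}^{n})$) lies in the Kleene subalgebra of $\McN_{n}$ generated by $\pi_{1},\dots,\pi_{n},0,1$; hence over $\sigma$ it coincides with one of the affine functions $0$, $1$, $\pi_{i}$, $1-\pi_{i}$. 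I will also use a sharper reading of the proof of that lemma: the functions $\pi_{i}^{t(i)}$ are linearly ordered on $\sigma$, and since $0\leq 1-\pi_{i}^{t(i)}\leq\m\leq\pi_{j}^{t(j)}\leq 1$ there, any two of the admissible restrictions $u_{n}(f)|_{\sigma}$, $u_{n}(g)|_{\sigma}$ are comparable in the pointwise order.

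For (a)$\Rightarrow$(b), first I would record that, by Lemma~\ref{Th:KleeneMVSolutions}, $\Sol_{\KK}(\Theta)=\Sol_{\MM}(u_n^{2}(\Theta))\cap\{0,\m,1\}^{n}$, and that $\ver{\sigma}\subseteq\{0,\m,1\}^{n}$; thus (a) says precisely that $u_{n}(f)$ and $u_{n}(g)$ agree at every vertex of $\sigma$ for each $(f,g)\in\Theta$. As $u_{n}(f)|_{\sigma}$ and $u_{n}(g)|_{\sigma}$ are affine and $\ver{\sigma}$ is affinely independent, two affine functions agreeing on all vertices agree on their convex hull $\sigma$, yielding (b). The implication (b)$\Rightarrow$(c) is immediate, since $\rel{\sigma}$ is a nonempty subset of $\sigma$.

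The hard part is (c)$\Rightarrow$(a), and this is exactly where comparability is needed: mere affineness does not suffice, because two affine functions may agree at a single interior point without agreeing on all of $\sigma$. Fixing $w\in\rel{\sigma}\cap\Sol_{\MM}(u_n^{2}(\Theta))$ and $(f,g)\in\Theta$, I would set $h=u_{n}(f)|_{\sigma}-u_{n}(g)|_{\sigma}$, an affine function with $h(w)=0$. By comparability $h$ has constant sign on $\sigma$, and an affine function of constant sign that vanishes at a relative interior point of a simplex must be identically zero (along any segment through $w$ inside $\sigma$, affineness forces the values at the two endpoints to have opposite signs unless both vanish). Hence $u_{n}(f)$ and $u_{n}(g)$ agree at every vertex of $\sigma$, and Lemma~\ref{Th:KleeneMVSolutions} gives (a).

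Finally I would deduce the statement about $\Sigma_{\Theta}$. Since the Kleene triangulation $\mathcal{S}_{n}$ of Definition~\ref{d:kleenet} is closed under taking faces and $\ver{\tau}\subseteq\ver{\sigma}$ whenever $\tau$ is a face of $\sigma$, the equivalence (a)$\Leftrightarrow$(b) shows $\Sigma_{\Theta}$ is downward closed under faces, hence a rational subcomplex of $\mathcal{S}_{n}$. The inclusion $|\Sigma_{\Theta}|\subseteq\Sol_{\MM}(u_n^{2}(\Theta))$ follows from (a)$\Rightarrow$(b). Conversely, any $w\in\Sol_{\MM}(u_n^{2}(\Theta))\subseteq[0,1]^{n}=|\mathcal{S}_{n}|$ lies in the relative interior of a unique $\tau\in\mathcal{S}_{n}$; then $w\in\rel{\tau}\cap\Sol_{\MM}(u_n^{2}(\Theta))$, so (c)$\Rightarrow$(a) gives $\tau\in\Sigma_{\Theta}$ and $w\in|\Sigma_{\Theta}|$. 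Therefore $|\Sigma_{\Theta}|=\Sol_{\MM}(u_n^{2}(\Theta))$, as required.
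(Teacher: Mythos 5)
Your proof is correct and takes essentially the same route as the paper's: Lemma~\ref{lemma:tot_order_simplex_Sn} to control the restrictions of $u_n(f)$, $u_n(g)$ to $\sigma$, and Lemma~\ref{Th:KleeneMVSolutions} to translate $\ver{\sigma}\subseteq\Sol_{\KK}{(\Theta)}$ into agreement at the vertices. The only difference is one of explicitness: for the implication from $\rel{\sigma}\cap \Sol_{\MM}{(u_n^{2}(\Theta))}\neq\emptyset$ you correctly identify that linearity alone would not suffice and invoke the pointwise comparability of the admissible restrictions $0,1,\pi_i,1-\pi_i$ over $\sigma$, a point the paper's proof leaves implicit behind the word \emph{linear}.
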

\begin{proof}
 By Lemma \ref{lemma:tot_order_simplex_Sn}, each $(f,g)\in u_n^{2}(\Theta)$ is such that $f$ and $g$ are linear over $\sigma$, for each $\sigma \in \mathcal{S}_{n}$.
Hence
  $\sigma\subseteq \Sol_{\MM}{(u_n^{2}(\Theta))}$ iff ${\rel{\sigma}\cap \Sol_{\MM}{(u_n^{2}(\Theta))}\neq\emptyset}$.
From  Lemma  \ref{Th:KleeneMVSolutions}, it follows that the set $\Sol_{\KK}{(\Theta)}$ is equal to $\Sol_{\MM}{(u_n^{2}(\Theta))}\cap\{0,\m,1\}^{n}$. Thus  $\ver{\sigma}\subseteq \Sol_{\KK}{(\Theta)}$ iff ${\sigma\subseteq \Sol_{\MM}{(u_n^{2}(\Theta))}}$. The rest  is a straightforward consequence of the first statement.
\end{proof}
For the proof of Theorem I we  will need one more lemma that provides us with a combinatorial  isomorphism criterion for MV-algebras of the form $\McN{(P)}$, where $P$ is a rational polyhedron. To state the criterion, in turn, we need to introduce the notion of regular triangulation. Regularity plays a central r\^ole in the theory of MV-algebras: for further background, see~\cite{mundicibis}.

If $v \in \Q^d\subseteq\R^d$, there is a unique way to write out $v$ in
coordinates as
\[
\textstyle v = (\frac{p_1}{q_1},\ldots,\frac{p_d}{q_d})
\]
 with $p_i,q_i \in\Z$ , $q_i > 0$, $p_i$ and $q_i$ relatively prime for each $i=1,\ldots, d$.
Setting $q = {\rm lcm}{\{q_1,q_2,\ldots,q_d\}}$, the \emph{homogeneous
correspondent} of $v$ is defined to be the integer vector
\[\textstyle
 \tilde{v} = (\frac{qp_1}{q_1},\ldots,\frac{qp_d}{q_d}, q) \in \Z^{d+1} 
.
\]
The positive integer $q$ is then called the
\emph{denominator} of $v$, written
\begin{align}\label{eq:den}
\den{v}
.
\end{align}
Clearly, $\den{v}=1$ iff $v$ has integers coordinates.
A rational $u$-dimensional simplex with vertices
$v_0,\ldots,v_u$ is \emph{unimodular}, or \emph{regular}, if the set
$\{\tilde{v}_0,\ldots,\tilde{v}_u\}$ can be completed
to a $\Z$-module basis of $\Z^{d+1}$; equivalently, if there is a $(d+1)\times
(d+1)$ matrix with integer entries whose first $u$ columns are
$\tilde{v}_0,\ldots,\tilde{v}_u$, and whose determinant is $\pm 1$. A simplicial
complex is \emph{unimodular}, or \emph{regular}, if each one of its simplices is regular.
Let us remark that the Kleene triangulation~$\mathcal{S}_{n}$ of Definition \ref{d:kleenet} is regular, by a straightforward computation.

We have shown above, cf.~
(\ref{eq:nerve}), how to  construct a rational polyhedron out of a weighted abstract simplicial complex. Now we recall how to obtain an instance of the latter from a regular triangulation of the former. Let $P\subseteq \R^{d}$ be a rational polyhedron, and let $\Sigma$ be a regular triangulation of $P$. Set
\begin{align*}
\SC{(\Sigma)}=\{\ver{\sigma} \mid \sigma \in \Sigma\}
.
\end{align*}
Then, clearly, $\SC{(\Sigma)}$ is an abstract simplicial complex with set of vertices $V=\{v \in \ver{\sigma}
\mid 
  \sigma \in\Sigma\}$. Observe that there is a map $\den_{\Sigma}\colon V \to \{1,2,\ldots\}$ defined by
\begin{align*}
\den_{\Sigma}{(v)}=\den{v}
,
\end{align*}
where $\den{v}$ is the denominator of $v$ defined as in the preceding paragraph. Thus, to any regular triangulation $\Sigma$ we associate the weighted abstract simplicial complex
\begin{align}\label{eq:complbasis}
(\SC{(\Sigma)}, \den_{\Sigma})
.
\end{align}
This construction of $(\SC{(\Sigma)}, \den_{\Sigma})$ out of $P$ only requires that $\Sigma$ be a rational, not necessarily regular, triangulation of $P$. However, the following statement becomes false upon weakening regularity  to rationality.
\begin{lemma}\label{l:isocriterion}Fix integers $d,d'\geq 1$, and let $P\subseteq \R^{d}$ and $Q\subseteq \R^{d'}$ be rational polyhedra. Further, let $\Sigma$ and $\Delta$ be regular triangulations of $P$ and $Q$, respectively. If there is an isomorphism of weighted abstract simplicial complexes
\[
(\SC{(\Sigma)},\den_{\Sigma})
\cong
(\SC{(\Delta)},\den_{\Delta})
,
\]
then    the MV-algebras $\McN{(P)}$ and $\McN{(Q)}$ are isomorphic.
\end{lemma}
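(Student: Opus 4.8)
The plan is to upgrade the combinatorial isomorphism into a genuine $\Z$-homeomorphism $\phi\colon P\to Q$, and then invoke the standard functoriality of the assignment $X\mapsto\McN(X)$. I recall (see \cite{mundicibis}) that $\McN$ is a contravariant functor from rational polyhedra and $\Z$-maps to MV-algebras, sending a $\Z$-map $\phi\colon P\to Q$ to the MV-homomorphism $g\mapsto g\circ\phi$, and that it carries $\Z$-homeomorphisms to MV-algebra isomorphisms. Hence it suffices to produce a $\Z$-homeomorphism between $P$ and $Q$, and the whole proof reduces to this construction.

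First I would extract the data. Let $f\colon V\to V'$ be the underlying vertex bijection of the given isomorphism $(\SC{(\Sigma)},\den_{\Sigma})\cong(\SC{(\Delta)},\den_{\Delta})$, where $V$ and $V'$ are the vertex sets of $\Sigma$ and $\Delta$. Since a geometric simplex is determined by its vertex set and $\Sigma,\Delta$ are triangulations, $f$ induces a face-respecting bijection $\sigma\mapsto\sigma^{f}$ between the simplices of $\Sigma$ and of $\Delta$, where $\sigma^{f}=\conv\{f(v)\mid v\in\ver{\sigma}\}$. For each $\sigma\in\Sigma$ let $\phi_{\sigma}\colon\sigma\to\sigma^{f}$ be the affine extension of $v\mapsto f(v)$, which is well defined and bijective because the vertices of the simplex $\sigma^{f}$ are affinely independent. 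If $\rho$ is a common face of $\sigma_{1},\sigma_{2}\in\Sigma$, then $\phi_{\sigma_{1}}$ and $\phi_{\sigma_{2}}$ agree on $\rho$, as both restrict to the affine extension of $f$ on $\ver{\rho}$. As $\Sigma$ is a triangulation, these maps glue to a well-defined continuous bijection $\phi\colon P=|\Sigma|\to|\Delta|=Q$, whose inverse glues analogously from the maps $(\phi_{\sigma})^{-1}$, governed by $f^{-1}$. It then remains to verify that each $\phi_{\sigma}$ and its inverse are restrictions of integer-coefficient affine maps.

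This simplexwise check is the hard part, and it is exactly where regularity and denominator preservation are used. Fix $\sigma\in\Sigma$ with vertices $v_{0},\dots,v_{u}$, set $w_{i}=f(v_{i})$, and note $\den{v_{i}}=\den{w_{i}}=:q_{i}$ by hypothesis. Pass to homogeneous correspondents $\tilde v_{i}\in\Z^{d+1}$ and $\tilde w_{i}\in\Z^{d'+1}$, whose last coordinates are both $q_{i}$. Since $\sigma$ is regular, $\{\tilde v_{0},\dots,\tilde v_{u}\}$ extends to a $\Z$-basis $\{\tilde v_{0},\dots,\tilde v_{u},b_{u+1},\dots,b_{d}\}$ of $\Z^{d+1}$. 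I would define a $\Z$-linear map $\Lambda\colon\Z^{d+1}\to\Z^{d'+1}$ on this basis by $\Lambda(\tilde v_{i})=\tilde w_{i}$ and by sending each $b_{j}$ to the integer vector that vanishes in all coordinates except the last, where it equals the last coordinate of $b_{j}$. The real-linear extension of $\Lambda$ preserves the last coordinate (on basis vectors this uses precisely $\den{v_{i}}=\den{w_{i}}$), so it descends to an affine map $\R^{d}\to\R^{d'}$ with integer coefficients that sends each $v_{i}$ to $w_{i}$ and hence restricts to $\phi_{\sigma}$ on $\sigma$; thus $\phi_{\sigma}$ is a $\Z$-map. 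Running the same construction for the regular simplex $\sigma^{f}$ and $f^{-1}$ gives an integer-affine map returning each $w_{i}$ to $v_{i}$; composing the two yields an integer-affine self-map fixing $v_{0},\dots,v_{u}$, hence the identity on $\sigma$, so $(\phi_{\sigma})^{-1}$ is a $\Z$-map as well. I expect this paragraph to be the genuine obstacle: the completion to a $\Z$-basis is available only because the simplices are regular, which is why the denominator-matching condition succeeds here and why the statement fails for merely rational triangulations.

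Assembling the pieces, $\phi$ is a continuous bijection that is integer-affine on each simplex of $\Sigma$, with integer-affine inverse on each simplex of $\Delta$; therefore both $\phi$ and $\phi^{-1}$ are $\Z$-maps and $\phi$ is a $\Z$-homeomorphism. By the functoriality recalled at the outset, precomposition $g\mapsto g\circ\phi$ is an isomorphism $\McN{(Q)}\xrightarrow{\sim}\McN{(P)}$, which gives $\McN{(P)}\cong\McN{(Q)}$ and completes the proof. Everything outside the simplexwise argument is routine bookkeeping about gluing affine maps over a triangulation.
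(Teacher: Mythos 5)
Your proof is correct, but note that the paper does not actually prove this lemma: its ``proof'' is a pointer to \cite[Proposition 4.4 and Theorem 6.5]{mm} and \cite[Lemma 3.14 and Theorem 6.8]{mundicibis}. What you have written is, in substance, the standard argument underlying those citations: promote the combinatorial isomorphism to a simplicial $\Z$-homeomorphism $\phi\colon P\to Q$ and then use contravariant functoriality of $\McN$ ($g\mapsto g\circ\phi$). Your simplexwise step is the genuine content and correctly locates where each hypothesis enters: regularity supplies the completion of $\{\tilde v_0,\dots,\tilde v_u\}$ to a $\Z$-basis of $\Z^{d+1}$, and the equality $\den{v_i}=\den{f(v_i)}$ is exactly what makes your $\Lambda$ preserve the last (homogenizing) coordinate, so that it descends to an integer affine map of the height-one hyperplanes carrying $v_i$ to $w_i$; running the construction on $\sigma^{f}$ with $f^{-1}$ and composing handles the inverse. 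Two small points you wave at as ``routine'' deserve a line each in a written version: global injectivity of the glued $\phi$ uses that in a simplicial complex $\sigma_1\cap\sigma_2=\conv(\ver{\sigma_1}\cap\ver{\sigma_2})$, so that the simplicial isomorphism forces $\sigma_1^{f}\cap\sigma_2^{f}=(\sigma_1\cap\sigma_2)^{f}$; and the paper's $\McN(X)$ is officially defined only for $X$ inside a unit cube, so one should either extend the definition to arbitrary rational polyhedra (as in \cite{mundicibis}) or rescale --- though the paper is equally casual on this. Your approach buys a self-contained argument; the paper's buys brevity at the cost of sending the reader to the literature.
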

\begin{proof}This was first proved in \cite[Proposition 4.4. and Theorem 6.5]{mm}. Cf.~also \cite[Lemma 3.14 and Theorem 6.8]{mundicibis}.
\end{proof}

\begin{proof}
[End of Proof of Theorem I] Since $B$ is finite, there is an integer $n\geq 1$ and a congruence $\Theta$ on the free Kleene algebra $\F^{\KK}_{n}$ on $n$ generators such that $B\cong \F^{\KK}_{n}/\Theta$. Let us identify $\F^{\KK}_{n}$ with $E(\widetilde{K}^{n})$ by Lemma \ref{l:freeK}. By the natural duality for Kleene algebras, the quotient map $q \colon E(\widetilde{K}_{n})\twoheadrightarrow B$
dualises to a monomorphism $\iota =D(q) \colon D(B)\hookrightarrow \widetilde{K}_{n}$ of Kleene spaces. Direct inspection shows that, set-theoretically, $\iota$ is an injection. Hence, using Lemma \ref{Th:DualAsSolutionSetKleene}, we shall safely identify $D(B)$ with the subset of $\{0,\m,1\}^{n}$ given by
\[\textstyle
\Sol_{\KK}{(\Theta)}=\{v \in \{0,\m,1\}^{n}\mid f(v)=g(v) \text{ for each } (f,g)\in\Theta\}
,
\]
 endowed with the structure of Kleene space by restriction. It then follows by a simple verification that $B$ is isomorphic to the Kleene algebra of restrictions of elements of $E(\widetilde{K}^{n})$ to $\Sol_{\KK}{(\Theta)}$. Now, as in (\ref{e:freeover}), we have $F(B)=\F_{n}^{\MM}/\widehat{u_{n}^{2}(\Theta)}$, where $u_{n}^{2}$ is as in (\ref{eq:un2}). By the Chang-McNaughton Theorem, let us identify~$\F_{n}^{\MM}$ with $\McN_{n}$. By the  Hay-W\'ojcicki Theorem we have
\begin{align}\label{e:1}
F(B)=\McN_{n}/\widehat{u_{n}^{2}(\Theta)}\cong \McN{(\Sol_{\MM}{(u_{n}^{2}(\Theta))})}
.
\end{align}
Let $\Sigma_{\Theta}$ be the regular triangulation, defined as in Lemma \ref{Cor:SimplexSolution},
such that
\begin{align}\label{e:2}
\Sol_{\MM}{(u_{n}^{2}(\Theta))}=|\Sigma_{\Theta}|
.
\end{align}
On the other hand, consider    the the weighted abstract simplicial complex $(\N{(\Sol_{\KK}{(\Theta)})}, \omega)$ associated to $D(B)$, as in Definition \ref{d:asspoly}, and let $P_{\N{(\Sol_{\KK}{(\Theta)})}}^{\omega}$  be the companion polyhedron of $D(B)$.
From Lemma \ref{Cor:SimplexSolution} it follows  that   $\Sigma_{\Theta}=\{C\in \mathcal{S}_n \mid
C\subseteq \Sol_{\KK}{(\Theta)}\}$. Direct inspection  of the definitions then shows that
\begin{align}\label{e:3}
(\SC{(\Sigma_{\Theta})},\den_{\Sigma_{\Theta}})=(\N{(\Sol_{\KK}{(\Theta)})}, \omega)
,
\end{align}
that is,
the weighted abstract simplicial complex corresponding to the regular triangulation $\Sigma_{\Theta}$ coincides with the weighted abstract simplicial complex associated to $D(B)$. (We explicitly remark that, having identified the underlying set of $D(B)$ with $\Sol_{\KK}{(\Theta)}$ at the outset of this proof, in (\ref{e:3}) we actually have an equality, and not just an isomorphism of  weighted abstract complexes.) Using Lemma \ref{l:isocriterion} together with (\ref{e:2}) and the definition of  $P_{\N{(\Sol_{\KK}{(\Theta)})}}^{\omega}$, from~(\ref{e:3}) we  infer
\begin{align*}
\McN{(\Sol_{\MM}{(u_{n}^{2}(\Theta))}})\cong\McN{(P_{\N{(\Sol_{\KK}{(\Theta)})}}^{\omega})}
,
\end{align*}
which together with (\ref{e:1}) yields
\begin{align*}
F(B)\cong \McN{(P_{\N{(\Sol_{\KK}{(\Theta)})}}^{\omega})}
,
\end{align*}
as was to be shown.
\end{proof}

\section{Bases of MV-algebras.}\label{s:MV-duality}
In this section we discuss the  notion of basis of an MV-algebra that will eventually lead to a solution of the recognition problem.

Given a finite set $\mathcal{B}=\{b_1,\ldots,b_t\}$ of non-zero elements of an MV-algebra~$A$, we say $\mathcal{B}$ is
\emph{starrable at $\{b_r,b_s\}\subseteq \mathcal{B}$}, where $r < s$, if $b_r \wedge b_s \neq 0$.
In that case, the \emph{stellar subdivision of $\mathcal{B}$ at  $\{b_r,b_s\}$} is the set
\[
\mathcal{B}_{b_r,b_s}=\{b'_1,\ldots,b'_t,b'_{t+1}\}\setminus\{0\} 
 ,
\]
where  $\setminus$ denotes set-theoretic difference, and
\begin{align*}
b'_r &= b_r\odot\neg b_s 
,\\ \tag{*}\label{t:stellar}
b'_s &= b_s\odot\neg b_r 
,\\
b'_i &= b_i  \ \text{ for } 1\leq i \leq t,\ r\neq i \neq s
,\\
b'_{t+1} &= b_r\wedge b_s 
.\end{align*}
We  say that $B_{b_r,b_s}$ is obtained
from $B$ via a \emph{stellar subdivision} (\emph{at} $\{b_r,b_s\}$).
Cf.~\cite[Lemma~2.4 and Prop.~5.2]{mmm}; see also \cite[Prop.~2.2 and proof of Lemma~4.3]{marraja}
for an early application of stellar subdivisions, in the  sense above, to  lattice-ordered groups. The classical, geometric notion of stellar subdivision of a simplicial complex---of which we make no direct use in this paper---is of course central to piecewise linear topology;  see \cite{alexandrov, rs} for background. Here we do not attempt to explain the relationship between the algebraic and the geometric notion of stellar subdivision, as this is irrelevant to our proofs; the interested reader is referred to \cite{mmm, m, mundicibis}.
\begin{definition}\label{d:basis}Let $A$ be an MV-algebra, and let $\mathcal{B}=\{b_1,\ldots,b_t\}\subseteq A\setminus\{0\}$ for
some integer $t \geq 0$.
\begin{enumerate}
 \item $\mathcal{B}$ is \emph{$1$-regular} if whenever $\mathcal{B}$ is starrable at $\{b_r,b_s\}\subseteq \mathcal{B}$ then, with $\mathcal{B}_{b_r,b_s}$ as in (\ref{t:stellar}),
 the  following hold.
For any $1\leq i_1 < \cdots < i_{k} \leq t$, if
\[
 (b_r\wedge b_s)\wedge b_{i_1}\wedge\cdots\wedge b_{i_{k}} > 0 
\text{ holds in } A
\]
then for every $\emptyset\neq J\subseteq \{i_1,\ldots,i_{k}\}$ with $\{r,s\}\not \subseteq J$
\[
 (b_r \wedge b_s)\wedge \bigwedge_{j\in J}b'_j > 0 
\text{ holds in } A.
\]
\item $\mathcal{B}$ is \emph{regular},
if it is $1$-regular, and $\mathcal{B}_{b_r,b_s}$ is again $1$-regular, whenever~$\mathcal{B}$ is starrable at $\{b_r,b_s\}\subseteq \mathcal{B}$.

\item
$\mathcal{B}$ is a \emph{basis} of $A$ if it generates $A$, it is regular, and there are integers (called \emph{multipliers}) $1 \leq m_{1},\ldots,m_{t}$ such that, letting  $mb$ be short for $b \oplus \cdots \oplus b$ ($m$ times),
\begin{align}\label{eq:partitionofunity}
\neg b_{i} =(m_{i} -1)b_{i} \oplus \bigoplus_{i\neq j}m_{j}b_{j}
, 
\text{ for each } i =1,\ldots,t.
\end{align}
\end{enumerate}
\end{definition}
\begin{remarks} (I)
If $A$ is any MV-algebra, there exists a lattice-ordered Abelian group
$G$ endowed with a strong order unit $1$, unique to within a
unit-preser\-ving lattice-group homomorphism, such that $A$  is isomorphic to the MV-algebra $\Gamma(G,1)=\{g \in G \mid 0\leq g \leq 1\}$ with $\neg g = 1-g$ and $g_{1}\oplus g_{2}=(g_{1}+g_{2})\wedge 1$.
(For  background on lattice-ordered groups  see, e.g., \cite{bkw}).
This fact is part of Mundici's categorical equivalence between MV-algebras and lattice-orderd Abelian groups with a strong order unit, see \cite[Chapter~7]{cdm}.
When translated into the language of lattice-ordered groups,  the operation $x\odot\neg y$ featuring in  (\ref{t:stellar}) coincides with truncated subtraction in $G$, ${(x-y)\vee 0}$.
Further, the system of equations  (\ref{eq:partitionofunity}) boils down to the single equivalent condition $\sum_{i=1}^{t}m_{i}b_{i}=1$. In other words, (\ref{eq:partitionofunity}) asks that $\{b_{1},\ldots,b_{n}\}$ be a partition of unity \textup{(}=of the strong order unit $1$\textup{)} in the lattice-group  $(G,1)$ with multipliers $m_{1},\ldots, m_{t}$.
Thus \emph{a basis of an MV-algebra $A$ is a regular partition of unity that generates $A$.}

(II) 
In \cite[Definition 6.1]{mundicibis}, bases of MV-algebras are not defined as in our Definition \ref{d:basis}. However, a routine adaptation
of \cite[Lemmas 2.1 and 2.6]{m} to MV-algebras,
together with \cite[Corollary 6.4]{mundicibis}, show that the two definitions are equivalent.  
(Here one uses the fact that the equation $x \odot \neg (x \wedge y) = x \odot \neg y$ holds in each MV-algebra, as an easy check via Chang's completeness theorem shows. This is needed because stellar subdivisions in  lattice-ordered Abelian groups are defined using the term $x-(x\wedge y)$, cf.~
\cite{m}, while (\ref{t:stellar}) uses $x\odot \neg y$.) Hence, the meaning of `basis' in the present paper agrees with the one in \cite{mundicibis}.

(III) 
If $A$ is an MV-algebra with a basis $\mathcal{B}$, the multipliers of $\mathcal{B}$ are uniquely determined \cite[(iii$'$) on p.~70]{mundicibis}. We will write
\[
{\rm mult} \colon \mathcal{B} \to \{1,2,\ldots\}
\]
for the function that assigns to each $b\in\mathcal{B}$ its multiplier.\qed
\end{remarks}

Any basis $\mathcal{B}$ of an MV-algebra $A$ determines an abstract simplicial complex
%
\[
\mathcal{B}^{\bowtie} = \{C \subseteq \mathcal{B} 
\mid 
\bigwedge C > 0 \text{ in } A\}
.
\]
%
(Observe that $\emptyset \in \mathcal{B}^{\bowtie}$ whenever $0\neq 1$ in $A$, i.e., whenever $A$ is non-trivial. Indeed, the infimum of the empty set in the  bounded distributive lattice $A$ is its top element $1$. In the degenerate case $A=\{0=1\}$, the unique basis of $A$ is the empty set, and $\mathcal{B}^{\bowtie}$ is empty.)
This complex is naturally weighted by the multipliers, i.e.,
\begin{align}\label{eq:complbasis2}
(\mathcal{B}^{\bowtie},{\rm mult})
\end{align}
is a weighted abstract simplicial complex. Cf.~the weighted complex  (\ref{eq:complbasis}) arising from a regular triangulation.

Examples of bases arise in abundance from regular triangulations. Let $\Sigma$ be a regular triangulation with $|\Sigma| \subseteq \R^{d}$,  and let $v$ be one of its vertices. The \emph{Schauder hat at $v$} is the uniquely determined $\Z$-map $h_{v}\colon |\Sigma| \to \R$ which attains the value $\frac{1}{\den{v}}$
at $v$, vanishes at all remaining vertices of  $\Sigma$, and is linear on each simplex of  $\Sigma$. The \emph{Schauder basis } $H_{\Sigma}$  over $\Sigma$ (called Schauder set in [3, (9.2.1)]) is the set of hats $ \{h_{v} \mid v \text{ is a vertex of } \Sigma\}$.
\begin{lemma}\label{lemma:schauder-basis}
 For an integer $n\geq 1$ and $X\subseteq [0,1]^{n}$, suppose there is a regular triangulation
$\Sigma$ with $X=|\Sigma|$.
\begin{enumerate}[\rm(1)] 
\item The Schauder basis $H_\Sigma$ over $\Sigma$
 is a basis of $\McN{(X)}$.
\item For each vertex $v \in \Sigma$, the  Schauder hat $h_{v} \in H_{\Sigma}$ at $v$ satisfies ${\rm mult}
(h_{v})=\den{v}$.
\item The bijection $v \mapsto h_{v}$, as $v$ ranges over the vertices of $\Sigma$, and $h_{v}$ denotes the Schauder hat at $v$, is an isomorphism of weighted abstract simplicial complexes
\[
(H^{\bowtie}_{\Sigma},{\rm mult}) \cong (\SC{(\Sigma)}, \den_{\Sigma})
,
\]
where the left-hand side is defined as in  \textup{(\ref{eq:complbasis2})}, and
the right-hand side  as in \textup{(\ref{eq:complbasis})}.
\end{enumerate}
\end{lemma}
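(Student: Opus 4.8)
The plan is to obtain parts (1) and (2) from a single computation and then read off part (3). The starting observation is that, for a vertex $v$ of $\Sigma$, the rescaled hat $\den{v}\cdot h_v$---the MV-algebraic $\den{v}$-fold sum $h_v\oplus\cdots\oplus h_v$---is exactly the barycentric tent function that takes the value $1$ at $v$, vanishes at every other vertex of $\Sigma$, and is linear on each simplex. Indeed $h_v$ attains its maximum $1/\den{v}$ at $v$, so the truncated sum $\den{v}\cdot h_v(x)=\min(\den{v}\,h_v(x),1)$ never actually truncates, and the product is precisely that tent function. Since the barycentric coordinates sum to $1$ on every simplex and these local expressions agree on shared faces, I get $\sum_{v}\den{v}\cdot h_v=1$ as functions on $X$; by Remark~(I) this is exactly the partition-of-unity condition (\ref{eq:partitionofunity}) with multipliers $m_v=\den{v}$.

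To finish part (1) I would invoke the standard theory of bases built from regular triangulations for the two remaining requirements---that $H_\Sigma$ generates $\McN(X)$ and that it is regular in the sense of Definition \ref{d:basis}(1)--(2). Via the equivalence of basis notions recorded in Remark~(II), the assertion that the Schauder hats of a regular triangulation constitute a basis is precisely the content of \cite{mundicibis} and \cite[(9.2.1)]{cdm}. Granting this, part (2) is immediate: the multipliers of a basis are uniquely determined (Remark~(III)), and the computation above already exhibits $\den{v}$ as a legitimate multiplier for $h_v$, whence ${\rm mult}(h_v)=\den{v}$.

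For part (3), weight preservation under the bijection $v\mapsto h_v$ is immediate from (2), since ${\rm mult}(h_v)=\den{v}=\den_{\Sigma}(v)$. The simplicial part reduces to showing that $\bigwedge_{i=0}^{k} h_{v_i}>0$ in $\McN(X)$ if and only if $\{v_0,\dots,v_k\}=\ver{\sigma}$ for some $\sigma\in\Sigma$. If the $v_i$ are the vertices of a simplex $\sigma$, then at any point of $\rel{\sigma}$ every associated tent function---hence every $h_{v_i}$---is strictly positive, so $\bigwedge_i h_{v_i}>0$. Conversely, suppose $x\in X$ satisfies $h_{v_i}(x)>0$ for all $i$, and let $\tau$ be the carrier of $x$, the unique simplex with $x\in\rel{\tau}$. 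Restricted to $\tau$, the hat $h_v$ equals $1/\den{v}$ times the barycentric coordinate of $v$ in $\tau$ when $v\in\ver{\tau}$, and vanishes identically on $\tau$ otherwise; hence $h_v(x)>0$ precisely when $v\in\ver{\tau}$. Thus every $v_i$ lies in $\ver{\tau}$, so $\{v_0,\dots,v_k\}\in\SC(\Sigma)$. This gives the desired bijective correspondence between $H^{\bowtie}_\Sigma$ and $\SC(\Sigma)$.

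The hard part will be the regularity requirement in part (1). The algebraic conditions of Definition \ref{d:basis}, expressed through iterated stellar subdivisions $\mathcal{B}\mapsto\mathcal{B}_{b_r,b_s}$, must be matched with the geometric fact that a unimodular triangulation remains unimodular under stellar subdivision, tracking the star and link incidences correctly along the correspondence $h_v\leftrightarrow v$. This is exactly what the hypothesis that $\Sigma$ is regular, and not merely rational, secures; it is also what makes the isomorphism criterion of Lemma \ref{l:isocriterion} applicable, and it is the reason the whole correspondence between bases and regular triangulations holds.
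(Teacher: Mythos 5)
Your proposal is correct, but it takes a more explicit route than the paper, whose entire proof of this lemma is a one-line citation: ``A reformulation of \cite[Theorem 5.8 and Corollary 6.4]{mundicibis}.'' You instead prove two of the three ingredients directly: the partition-of-unity identity $\sum_v \den{v}\cdot h_v = 1$ (correctly observing that no truncation occurs because $h_v \leq 1/\den{v}$ and the untruncated sum is exactly $1$ on each carrier simplex, so via Remark (I) the condition (\ref{eq:partitionofunity}) holds with $m_v = \den{v}$, and then (2) follows from uniqueness of multipliers), and the combinatorial isomorphism in (3) via the carrier argument showing $h_v(x) > 0$ iff $v$ is a vertex of the carrier of $x$. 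Both arguments are sound; the carrier argument implicitly uses that $\SC(\Sigma)$ is closed under subsets, which is fine since the paper treats it as an abstract simplicial complex. What remains delegated to the literature in your write-up --- that $H_\Sigma$ generates $\McN(X)$ and is regular in the sense of Definition \ref{d:basis} --- is exactly what the paper also delegates, so you are no less rigorous than the source; your final paragraph on regularity is commentary rather than proof, but since you already cited \cite{mundicibis} for that point in the second paragraph, there is no actual gap. The trade-off: the paper's proof is shorter and uniform, while yours makes parts (2) and (3) self-contained and isolates precisely which facts genuinely require the machinery of \cite{mundicibis}.
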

\begin{proof}A reformulation of \cite[Theorem 5.8 and Corollary 6.4]{mundicibis}.
\end{proof}
To solve the recognition problem we will need a lemma that strengthens Lemma \ref{l:isocriterion} from Schauder bases to bases {\it tout court}.
\begin{lemma}\label{l:isocriterion2}Let $A_{1}$ and $A_{2}$ be MV-algebras with bases $\mathcal{B}_{1}$ and $\mathcal{B}_{2}$,
respectively.  If there is an isomorphism of weighted abstract simplicial complexes
\[
(\mathcal{B}_{1}^{\bowtie},{\rm mult})
\cong
(\mathcal{B}_{2}^{\bowtie},{\rm mult})
,
\]
then $A_{1}$ and $A_{2}$ are isomorphic as MV-algebras.
\end{lemma}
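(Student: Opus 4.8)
```latex
The plan is to reduce Lemma~\ref{l:isocriterion2} to Lemma~\ref{l:isocriterion}, which already settles the corresponding statement for the special case of Schauder bases. The bridge between the two is the following principle, implicit in the theory of bases recalled above: every basis of an MV-algebra ``is'' a Schauder basis, in the sense that it arises from a regular triangulation of a canonically associated rational polyhedron. More precisely, the defining properties of a basis (regularity, together with the partition-of-unity condition) are exactly what is needed to realize $A_i$ as $\McN{(P_i)}$ for a rational polyhedron $P_i$ carrying a regular triangulation $\Sigma_i$, in such a way that $\mathcal{B}_i$ is carried to the Schauder basis $H_{\Sigma_i}$. This is the content of the representation theory behind Definition~\ref{d:basis} and Remark~(II); see \cite[\textsection 6]{mundicibis}.

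Concretely, I would proceed as follows. For each $i\in\{1,2\}$, invoke the basis representation theorem to obtain an integer $d_i\ge 1$, a rational polyhedron $P_i\subseteq \R^{d_i}$, a regular triangulation $\Sigma_i$ of $P_i$ with $P_i=|\Sigma_i|$, and an isomorphism $A_i\cong \McN{(P_i)}$ under which the basis $\mathcal{B}_i$ corresponds to the Schauder basis $H_{\Sigma_i}$ over $\Sigma_i$. Then I would transport the given isomorphism of weighted complexes $(\mathcal{B}_1^{\bowtie},{\rm mult}) \cong (\mathcal{B}_2^{\bowtie},{\rm mult})$ across these correspondences. By part~(3) of Lemma~\ref{lemma:schauder-basis}, the bijection $v\mapsto h_v$ is an isomorphism
\[
(H_{\Sigma_i}^{\bowtie},{\rm mult}) \cong (\SC{(\Sigma_i)},\den_{\Sigma_i})
\]
of weighted abstract simplicial complexes, for each $i$. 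Composing the three isomorphisms of weighted complexes---namely the one from Lemma~\ref{lemma:schauder-basis}.(3) for $i=1$ read backwards, the hypothesized isomorphism $(\mathcal{B}_1^{\bowtie},{\rm mult})\cong(\mathcal{B}_2^{\bowtie},{\rm mult})$, and the one from Lemma~\ref{lemma:schauder-basis}.(3) for $i=2$---I obtain an isomorphism of weighted abstract simplicial complexes
\[
(\SC{(\Sigma_1)},\den_{\Sigma_1}) \cong (\SC{(\Sigma_2)},\den_{\Sigma_2}).
\]
Lemma~\ref{l:isocriterion} now applies directly, yielding $\McN{(P_1)}\cong \McN{(P_2)}$, whence $A_1\cong A_2$.

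The one point requiring genuine care, and the step I expect to be the main obstacle, is the first: rigorously justifying that an \emph{arbitrary} basis $\mathcal{B}_i$ of $A_i$ can be realized as a Schauder basis over a regular triangulation of some rational polyhedron, with the multipliers matching the denominators. Definition~\ref{d:basis} gives the basis by purely algebraic (lattice-theoretic and partition-of-unity) data, whereas Lemma~\ref{lemma:schauder-basis} produces Schauder bases only from a pre-existing geometric object $\Sigma$. Closing this gap is precisely where Remark~(II) is invoked: the equivalence, established there, between Definition~\ref{d:basis} and the basis notion of \cite[Definition~6.1]{mundicibis} guarantees that $\mathcal{B}_i$ is a basis in the sense of \cite{mundicibis}, and the structure theory of \cite[\textsection 6]{mundicibis} then furnishes the desired $P_i$, $\Sigma_i$, and the identification of $\mathcal{B}_i$ with $H_{\Sigma_i}$ under an isomorphism $A_i\cong\McN{(P_i)}$. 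Once this realization is in hand, the remainder of the argument is the formal composition of weighted-complex isomorphisms described above, and the conclusion follows from Lemma~\ref{l:isocriterion} with no further difficulty.
```
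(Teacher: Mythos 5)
Your proposal is correct and follows essentially the same route as the paper: the paper's proof is simply a citation of \cite[Theorem 6.5]{mm} (cf.\ \cite[Theorem 6.8]{mundicibis}), whose content is precisely your reduction---realize each basis $\mathcal{B}_i$ as a Schauder basis over a regular triangulation and then apply the Schauder-basis isomorphism criterion. The one point you flag as the main obstacle is indeed the crux, and the paper handles it by noting that any MV-algebra with a basis in the sense of Definition~\ref{d:basis} is automatically semisimple (citing \cite[Lemma 2.6]{m}), which is what licenses the representation $A_i\cong\McN{(P_i)}$ you invoke.
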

\begin{proof}This was originally proved in \cite[Theorem 6.5]{mm} under the additional assumption that the algebras at hand be semisimple. The assumption is now subsumed by the fact that any MV-algebra having a basis (in the sense of Definition \ref{d:basis})
is automatically semisimple, cf.~\cite[Lemma 2.6 and the Remark following it]{m}. Cf.~also \cite[Theorem 6.8]{mundicibis}.
\end{proof}

\section{MV-algebras with a Kleene basis: solution to the recognition problem.}\label{s:recognition}

We now isolate those special bases of an MV-algebra that detect freeness over a finite Kleene algebra.
Let $\SC$ be any abstract simplicial complex with vertex set $V$. A subset  $N \subseteq V$ such that $N \not \in \SC$ is called a \emph{non-face} of $\SC$; and a non-face that  is inclusion-minimal, meaning that all of its proper subsets belong to $\SC$, is called a \emph{missing face} of
$\SC$. We further write
\[
\SC^{(k)} = \big\{S \in\SC \bigm| |S|=k\big\}
\]
for the \emph{$k$-skeleton} of $\SC$, i.e., the set of $(k-1)$-simplices of $\SC$. It is clear that $\SC^{(2)}$ is the same thing as a finite graph (with undirected edges, and neither loops nor multiple edges). We call such a graph $\SC^{(2)}$ a \emph{comparability}, or we say that \emph{there is a comparability over $\SC^{(2)}$}, if the edges of  $\SC^{(2)}$ can be transitively oriented, meaning that whenever edges $\{p,r_1\}, \{r_1,r_2\}, \ldots,
\{r_{u-1},r_u\}, \{r_u,q\}$ are oriented as $(p,r_1),(r_1,r_2),\ldots,$ $(r_{u-1},r_u),$ $(r_u,q)$, then there is an edge $\{p,q\}$ oriented as $(p,q)$. Finally, we recall that a \emph{sink} of a directed graph is a vertex $p$ of the graph such that no (directed) edge of the graph has $p$ as first element.
\begin{definition}\label{d:kleenebasis}
Let $A$ be an MV-algebra. A basis $\mathcal{B}$  of $A$ is called a \emph{Kleene basis}  if it satisfies the following conditions.
     \begin{itemize}
       \item[{\rm (a)}] The multiplier of each element of $\mathcal{B}$ is either $1$ or $2$, i.e., the range of ${\rm mult}\colon \mathcal{B}\to \{1,2,\ldots\}$ is contained in $\{1,2\}$.
       \item[{\rm (b)}] The abstract simplicial complex $\mathcal{B}^{\bowtie}$ has no missing face of cardinality three or greater.
       \item[{\rm (c)}] There is a comparability over the graph $(\mathcal{B}^{\bowtie})^{(2)}$ such that each element of $\mathcal{B}$ having multiplier $1$ is a  sink.
       \end{itemize}
\end{definition}
%
 Our second main result is:
\begin{theoremII}
Let $A$ be  any MV-algebra. Then $A$ is free over some finite Kleene algebra if, and only if, $A$ has a Kleene basis.
\end{theoremII}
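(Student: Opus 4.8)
The plan is to prove both directions by passing through the geometric/combinatorial data and invoking the isomorphism criterion for bases, Lemma~\ref{l:isocriterion2}. The central observation is that a Kleene basis is exactly the combinatorial shadow of the weighted nerve of a Kleene space, as produced in Theorem~I. More precisely, I would show that the weighted abstract simplicial complexes $(\mathcal{B}^{\bowtie},{\rm mult})$ arising from Kleene bases coincide, up to isomorphism, with the weighted complexes $(\N{(W)},\omega)$ arising from Kleene spaces $(W,\leq,R,M)$ via Definition~\ref{d:asspoly}. Granting this, Lemma~\ref{l:isocriterion2} and Theorem~I together close the loop: if $A$ has a Kleene basis $\mathcal{B}$, reconstruct a Kleene space $W$ whose weighted nerve is isomorphic to $(\mathcal{B}^{\bowtie},{\rm mult})$, let $B=E(W)$ be the dual Kleene algebra, and conclude $A\cong\McN{(P^{\omega}_{\N{(W)}})}\cong F(B)$; conversely, for $A=F(B)$ the Schauder basis over the regular triangulation $\Sigma_{\Theta}$ furnished by Theorem~I is a basis whose weighted complex is $(\N{(W)},\omega)$, and one checks directly that this basis satisfies conditions (a), (b), (c).

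For the \emph{only if} direction (free $\Rightarrow$ Kleene basis), I would start from $A\cong\McN{(P^{\omega}_{\N{(W)}})}$ with $W=D(B)=\Sol_{\KK}{(\Theta)}\subseteq\{0,\m,1\}^{n}$, and take the Schauder basis $H_{\Sigma_{\Theta}}$ over the regular triangulation $\Sigma_{\Theta}$ of Lemma~\ref{Cor:SimplexSolution}. By Lemma~\ref{lemma:schauder-basis} this is a basis, and its weighted complex equals $(\N{(W)},\omega)$ by equation~(\ref{e:3}). Condition~(a) is immediate since $\omega$ takes only the values $1$ and $2$. For~(b) I would argue that $\N{(W)}$, being an order complex, is a \emph{flag} (clique) complex: a set of vertices is a chain iff it is pairwise comparable, so every missing face has cardinality exactly two, giving no missing face of cardinality three or greater. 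For~(c), the comparability is the one induced by the poset order $\leq$ restricted to $W$: orient each edge $\{p,q\}$ of $(\N{(W)})^{(2)}$ according to the order (from smaller to larger, say); this is transitively orientable precisely because $\leq$ is transitive, and the sinks are the maximal elements. Since vertices of multiplier~$1$ are exactly those in $M\subseteq\max{W}$, they are maximal, hence sinks. (One must fix the orientation convention so that maximal elements become sinks rather than sources, which is a matter of choosing the direction.)

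For the \emph{if} direction (Kleene basis $\Rightarrow$ free), I would reverse this reconstruction. Given a Kleene basis $\mathcal{B}$, use the transitive orientation from~(c) to define a partial order $\leq$ on the vertex set $\mathcal{B}$: declare $p\leq q$ when there is a directed path from $p$ to $q$, together with reflexivity. Transitivity of the comparability makes $\leq$ a genuine partial order, and condition~(b)---that $\mathcal{B}^{\bowtie}$ is a flag complex---guarantees that $\mathcal{B}^{\bowtie}$ is recovered as the order complex $\N{(\mathcal{B},\leq)}$, since in a flag complex a face is precisely a set of pairwise-comparable (hence totally ordered) vertices. Let $M$ be the set of multiplier-$1$ vertices; by~(c) these are sinks, i.e.\ maximal in $\leq$, so $M\subseteq\max{\mathcal{B}}$, and setting $\omega=\text{mult}$ recovers the weighting of Definition~\ref{d:asspoly}. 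Finally one must supply a relation $R$ making $(\mathcal{B},\leq,R,M)$ a genuine Kleene space in the sense of Definition~\ref{d:kleenespace}; since the weighted nerve does not depend on $R$ (as remarked in Definition~\ref{d:asspoly}), it suffices to exhibit \emph{some} admissible $R$ satisfying conditions~(i)--(iii), and I would verify that such an $R$ always exists for any poset-with-distinguished-maximal-subset (for instance the relation forced minimally by reflexivity and the closure conditions (i) and (iii), checking compatibility with~(ii) using $M\subseteq\max{W}$). With the Kleene space $W=(\mathcal{B},\leq,R,M)$ in hand, set $B=E(W)$; by the natural duality $D(B)\cong W$, so Theorem~I gives $F(B)\cong\McN{(P^{\omega}_{\N{(W)}})}$, while Lemma~\ref{l:isocriterion2} applied to the isomorphism $(\mathcal{B}^{\bowtie},{\rm mult})\cong(\N{(W)},\omega)$ gives $A\cong\McN{(P^{\omega}_{\N{(W)}})}$, whence $A\cong F(B)$.

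I expect the main obstacle to be the construction and verification of the relation $R$ in the \emph{if} direction---that is, confirming that an arbitrary poset-with-weighting coming from a Kleene basis genuinely underlies \emph{some} Kleene space, and in particular that the abstract conditions~(i)--(iii) on $R$ are simultaneously satisfiable given only $\leq$ and $M\subseteq\max{W}$. A secondary delicate point is justifying that conditions~(a), (b), (c) are not merely \emph{necessary} but capture exactly the combinatorics of weighted nerves of Kleene spaces; in particular one must confirm that the flag/comparability structure together with the sink condition on multiplier-$1$ vertices forces the weighting to agree with the $M$-induced weighting $\omega$, and that no additional constraint on Kleene spaces (beyond those recorded by $\leq$, $M$, and the existence of \emph{a} valid $R$) is being silently required. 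Both obstacles are essentially bookkeeping about the axioms in Definition~\ref{d:kleenespace}, but they are where the proof earns its correctness.
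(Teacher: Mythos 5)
Your proposal is correct and shares the paper's overall architecture (both directions pass through the weighted complexes $(\mathcal{B}^{\bowtie},{\rm mult})$ and $(\N{(W)},\omega)$, Schauder bases, and Lemma~\ref{l:isocriterion2}), but the \emph{if} direction is organised differently, and the difference is exactly at the point you flag as your main obstacle. You reconstruct an \emph{abstract} Kleene space $(\mathcal{B},\leq,R,M)$ from the basis, which forces you to manufacture an admissible relation $R$ and then to invoke the full strength of the Davey--Werner equivalence in the form $D(E(W))\cong W$ before applying Theorem~I. That route does work: the minimal $R$ closed under (i) and (iii) is the ``common upper bound'' relation $\{(x,y)\mid x,y\leq u\text{ for some }u\}$, and condition (ii) holds for it precisely because $M\subseteq\max{W}$. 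The paper instead proves (Lemma~\ref{Lem:ontohom}) that any Kleene complex is isomorphic to $(\N{(W)},\den)$ for a subposet $W$ of some $(\{0,\m,1\}^{n},\preceq_{n})$, via an explicit order-embedding $\varphi(v_i)=(d(i),\delta^i_1,\ldots,\delta^i_m)$ that also matches weights with denominators; once $W$ sits concretely inside $\{0,\m,1\}^{n}$, the relation $\sim_{n}$ restricts to a valid $R$ for free (Lemma~\ref{Th:DualAsSolutionSetKleene}), and freeness of $\McN{(X)}$ is then obtained not by re-applying Theorem~I but by showing $X=\Sol_{\MM}{(u_n^2(\Theta))}$ for $\Theta=\Ker{E(\iota)}$ (Lemma~\ref{lemma:triangofX}) and quoting the Hay--W\'ojcicki Theorem. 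What the paper's route buys is that one never has to check the axioms on $R$ abstractly nor rely on surjectivity of $D$ onto all abstract Kleene spaces; what your route buys is conceptual directness, at the cost of the two verifications you correctly identify (existence of $R$, and that (a)--(c) exactly capture weighted nerves of Kleene spaces --- the latter being precisely the content of Lemma~\ref{Lem:ontohom} combined with Bayer's characterisation of order complexes, Lemma~\ref{lemma:orderedcomplexes}). Your \emph{only if} direction is the paper's, with Lemma~\ref{Lem:ontohom}'s easy half unpacked inline.
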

%
For the proof we  prepare several lemmas. First we recall how to  recognise order complexes among abstract simplicial complexes.
\begin{lemma}\label{lemma:orderedcomplexes}
An abstract simplicial complex $\SC$ is the nerve of some \textup{(}finite\textup{)} partially ordered set if, and only if, it
has no missing faces of cardinality $\geq 3$, and the graph $\SC^{(2)}$ is a comparability.
\end{lemma}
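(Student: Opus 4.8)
The plan is to prove both directions of the biconditional by analyzing what it means for an abstract simplicial complex to arise as the nerve of a finite poset. Recall that the nerve $\N(O)$ of a poset $O$ consists exactly of the chains of $O$, so the vertices of $\N(O)$ are the elements of $O$, and a subset $S$ is a simplex iff $S$ is totally ordered by the poset relation. The key structural observation is that chain-ness is determined by pairwise comparability together with transitivity: a subset of a poset is a chain if and only if its elements are pairwise comparable and these comparabilities are consistent with a transitive relation (the poset order restricted to $S$ is a total order). This suggests that the two stated conditions precisely capture being a nerve.

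For the forward direction, suppose $\SC = \N(O)$ for a finite poset $(O,\leq)$. I would orient the edges of $\SC^{(2)}$ according to the order: orient $\{p,q\}$ as $(p,q)$ when $p \leq q$ (choosing a convention for the pair). This orientation is transitive precisely because $\leq$ is transitive, so $\SC^{(2)}$ is a comparability. For the missing-face condition, I must show that any non-face of cardinality $\geq 3$ contains a non-face of cardinality $2$ (equivalently, a missing face of a nerve has cardinality at most two). If $S$ with $|S|\geq 3$ is not a chain, then by definition of total order some two elements $p,q \in S$ are incomparable, so $\{p,q\}\notin\SC$; hence $S$ is not inclusion-minimal among non-faces. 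Thus every missing face has cardinality two, establishing condition two.

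For the converse, suppose $\SC$ has no missing face of cardinality $\geq 3$ and $\SC^{(2)}$ is a comparability. Fix a transitive orientation of $\SC^{(2)}$ and define a relation on the vertex set $V$ by $p \leq q$ iff $p=q$ or the edge $\{p,q\}$ is oriented $(p,q)$. Transitivity of the orientation (combined with adding reflexivity) gives a preorder; antisymmetry follows because the orientation assigns a single direction to each edge and there are no loops, so $p\leq q$ and $q\leq p$ force $p=q$. Hence $(V,\leq)$ is a finite poset. It remains to check $\N(V,\leq) = \SC$. A simplex of $\SC$ is pairwise an edge (since $\SC$ is closed under subsets and includes all vertices), hence pairwise comparable under $\leq$; transitivity of the orientation then makes it a chain, so $\SC \subseteq \N(V,\leq)$. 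Conversely, suppose $C$ is a chain; then $C$ is pairwise comparable, so every $2$-subset of $C$ lies in $\SC^{(2)}\subseteq \SC$. If $C\notin\SC$, pick an inclusion-minimal non-face $N\subseteq C$; by hypothesis $|N|\leq 2$, but every pair in $C$ is an edge, so $|N|\leq 1$ is impossible (singletons and $\emptyset$ are faces), a contradiction. Hence $C\in\SC$, giving $\N(V,\leq)\subseteq\SC$.

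The main obstacle I anticipate is handling the converse direction cleanly, specifically verifying that the no-missing-face condition correctly forces chains to be simplices. The subtlety is that a transitive orientation guarantees the \emph{relation} is a poset, but one must separately ensure that every totally ordered subset is actually present in $\SC$; this is exactly where the missing-face bound of cardinality two does its work, by an induction (or minimal-non-face) argument showing that pairwise-edge subsets cannot fail to be simplices. I would take care to state the orientation convention unambiguously and to confirm antisymmetry, since a transitive tournament-style orientation could in principle hide a directed cycle; however, the transitivity condition as defined rules this out, because a directed cycle of length $\geq 2$ would force an edge oriented in both directions.
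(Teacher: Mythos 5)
Your proof is correct. The paper itself does not argue this lemma at all: its ``proof'' is a one-line citation to Bayer's \emph{Barycentric subdivisions} (Proposition~4 of that paper), so you have supplied a complete elementary argument where the authors outsource to the literature. Your argument is the standard one underlying that cited result --- order complexes are exactly the clique (flag) complexes of comparability graphs --- and all the delicate points are handled: in the forward direction you correctly reduce every non-chain of size $\geq 3$ to an incomparable pair, so missing faces have cardinality at most two; in the converse you correctly observe that a transitive orientation has no directed cycles (an edge cannot carry both directions), so adding reflexivity yields a genuine partial order, and the minimal-non-face argument is exactly where the ``no missing face of cardinality $\geq 3$'' hypothesis is consumed to show that every pairwise-edge subset (in particular every chain) is a simplex. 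The only cosmetic remark is that in showing $\SC\subseteq\N(V,\leq)$ you invoke transitivity of the orientation to conclude a pairwise-comparable simplex is a chain, whereas pairwise comparability within an already-established poset suffices; this is harmless. What your approach buys is self-containedness; what the paper's citation buys is brevity and a pointer to further context on recognizing order complexes.
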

\begin{proof}This is \cite[Proposition 4]{bayer};  further references related to this result are given in that paper.
\end{proof}
%
We next establish a strengthening of Lemma \ref{Cor:SimplexSolution}.
\begin{lemma}\label{lemma:triangofX}
Fix an integer $n\geq 1$, and a subset $X\subseteq [0, 1]^n$. The following are
equivalent.
\begin{itemize}
\item[\textup{(i)}] $X=\Sol_{\MM}{(u_n^{2}(\Theta))}$ for some finite set $\Theta\subseteq E(\widetilde{K}^{n})^{2}$, where $u_{n}^{2}$ is as in \textup{(\ref{eq:un2})}.
\item[\textup{(ii)}] There exists a subposet $(W,\preceq)$ of $(\{0,\m,1\}^n,\preceq_n)$ such that $X$ is triangulated by the nerve of  $(W,\preceq)$, meaning that the regular simplicial complex
\[
\Sigma_{W}=\{\conv{C} 
\mid
 C \in \N{(W)}\}
\]
satisfies
\begin{align*}\label{eq:triang}
X = |\Sigma_{W}|
.
\end{align*}
\end{itemize}
\end{lemma}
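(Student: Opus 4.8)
The plan is to prove the equivalence by moving through the intermediate description of $X$ as a subset of $\{0,\m,1\}^n$, exploiting that both conditions force $X$ to be a subcomplex of the Kleene triangulation $\mathcal{S}_n$. First I would establish the implication (i)$\Rightarrow$(ii). Assuming $X=\Sol_{\MM}{(u_n^2(\Theta))}$ for some finite $\Theta$, I invoke Lemma~\ref{Cor:SimplexSolution} directly: it tells me that $\Sigma_\Theta=\{\sigma\in\mathcal{S}_n\mid \ver{\sigma}\subseteq\Sol_{\KK}(\Theta)\}$ is a (regular) simplicial complex triangulating $X$. The natural candidate for the subposet is then $W=\Sol_{\KK}(\Theta)$, viewed as a subposet of $(\{0,\m,1\}^n,\preceq_n)$ via restriction of $\preceq_n$. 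The key observation is that, by the definition of the Kleene triangulation $\mathcal{S}_n$ in Definition~\ref{d:kleenet}, the simplices of $\mathcal{S}_n$ are exactly the convex hulls of chains of $(\{0,\m,1\}^n,\preceq_n)$; hence $\Sigma_\Theta$ consists precisely of the convex hulls of those chains contained in $W$, which is the same as $\{\conv{C}\mid C\in\N{(W)}\}=\Sigma_W$. Thus $X=|\Sigma_\Theta|=|\Sigma_W|$, giving (ii). Regularity of $\Sigma_W$ follows because it is a subcomplex of the regular triangulation $\mathcal{S}_n$.

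For the converse (ii)$\Rightarrow$(i), I start from a subposet $(W,\preceq)$ with $X=|\Sigma_W|$. The strategy is to exhibit the finite set $\Theta$. By Lemma~\ref{Th:DualAsSolutionSetKleene}, the subposet $W$, equipped with the restricted relations, is a subobject of $\widetilde{K}^n$, and $W=\Sol_{\KK}(\Ker E(\iota))$ where $\iota\colon W\hookrightarrow\widetilde{K}^n$ is the inclusion. Since $E(\widetilde{K}^n)$ is a finite Kleene algebra (being the free Kleene algebra on $n$ generators, hence finite by local finiteness of $\KK$), the kernel $\Ker E(\iota)$, while a priori an infinite congruence, is generated by a finite set; equally, since $E(\widetilde{K}^n)$ is finite I may simply take $\Theta=\Ker E(\iota)$ itself, which is a finite subset of $E(\widetilde{K}^n)^2$. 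Then $\Sol_{\KK}(\Theta)=W$. Applying Lemma~\ref{Cor:SimplexSolution} to this $\Theta$, I get that $\Sigma_\Theta=\{\sigma\in\mathcal{S}_n\mid\ver{\sigma}\subseteq W\}$ triangulates $\Sol_{\MM}{(u_n^2(\Theta))}$, i.e.\ $\Sol_{\MM}{(u_n^2(\Theta))}=|\Sigma_\Theta|$. As in the forward direction, $\Sigma_\Theta=\Sigma_W$ because the simplices of $\mathcal{S}_n$ with all vertices in $W$ are exactly the convex hulls of chains of $W$. Therefore $X=|\Sigma_W|=|\Sigma_\Theta|=\Sol_{\MM}{(u_n^2(\Theta))}$, which is (i).

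The main obstacle, and the point requiring the most care, is the identification $\Sigma_\Theta=\Sigma_W$ in both directions, together with the verification that $\Sigma_W$ is genuinely a well-defined regular simplicial complex (its simplices meet in common faces). This reduces to the combinatorial fact, implicit in Definition~\ref{d:kleenet}, that a subset $C\subseteq\{0,\m,1\}^n$ is a chain of $(\{0,\m,1\}^n,\preceq_n)$ precisely when $\conv C\in\mathcal{S}_n$, and that the face relation among simplices of $\mathcal{S}_n$ mirrors the subset relation among the corresponding chains. Once this dictionary between chains of $W$ and simplices of $\mathcal{S}_n$ with vertices in $W$ is made explicit, both implications follow by directly feeding $W=\Sol_{\KK}(\Theta)$ into Lemma~\ref{Cor:SimplexSolution}. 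A secondary subtlety is ensuring the finiteness of $\Theta$ in the converse; this is handled cleanly by the finiteness of $E(\widetilde{K}^n)$, so no genuine difficulty arises there. The regularity assertion in (ii) is automatic throughout, since every simplex of $\mathcal{S}_n$ is regular (as remarked after Definition~\ref{d:kleenet}) and $\Sigma_W$ is a subcomplex of $\mathcal{S}_n$.
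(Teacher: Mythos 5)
Your proposal is correct and follows essentially the same route as the paper: both directions hinge on taking $W=\Sol_{\KK}(\Theta)$ (respectively $\Theta=\Ker E(\iota)$ via Lemma~\ref{Th:DualAsSolutionSetKleene}) and feeding this into Lemma~\ref{Cor:SimplexSolution}, with the dictionary between chains of $W$ and simplices of $\mathcal{S}_n$ having all vertices in $W$. The only cosmetic difference is that in the converse direction the paper verifies $X=\Sol_{\MM}(u_n^2(\Theta))$ pointwise via relative interiors, whereas you identify $\Sigma_\Theta=\Sigma_W$ at the level of complexes and conclude from the supports; both are valid.
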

\begin{proof}
(i) $\Rightarrow$ (ii). This  follows easily from Lemma \ref{Cor:SimplexSolution}, together with a trivial computation to verify that $\Sigma_{W}$ is regular.

(ii) $\Rightarrow$ (i).  Given $(W,\preceq)$ as in the hypothesis, consider the Kleene space
\[
(W,\preceq,\sim,M)
\]
obtained from $\widetilde{K}^{n}=(\{0,\m,1\}^{n}, \preceq_{n}, \sim_{n}, \{0,1\}^{n})$  by restricting  all relations to $W$ as in Lemma \ref{Th:DualAsSolutionSetKleene}. Then $W=\Sol_{\KK}{(\Ker{E(\iota)})}$,
where ${\iota\colon (W,\preceq,\sim,M)\hookrightarrow \widetilde{K}^{n}}$ is the  monomorphism of Kleene spaces induced by the inclusion $W\subseteq \{0,\m,1\}^{n}$. Set
\[
 \Theta=\Ker{E(\iota)}
.
 \]
Then  by Lemmas \ref{Th:DualAsSolutionSetKleene} and \ref{Th:KleeneMVSolutions} we have
\begin{align}\label{Eq:Solution_vertices}
\textstyle W=\Sol_{\KK}{(\Theta)}=\Sol_{\MM}{(u_n^{2}(\Theta))}\cap\{0,\m,1\}^{n}.
\end{align}
To complete
the proof we show that $X=\Sol_{\MM}{(u_n^{2}(\Theta))}$.

Suppose first $x\in X$. Since $X$ is triangulated by $\N(W)$, there
is a unique abstract simplex $C=\{v_1,\ldots,v_k\} \in\N(W)$ such that
${x\in \rel{\conv{C}}}$. From the definition of the Kleene triangulation $\mathcal{S}_{n}$ (Definition \ref{d:kleenet}), together with the fact that the order $\preceq$ on $W$ is the restriction of the order $\preceq_{n}$ on $\{0,\m,1\}^{n}$, we see that $\sigma=\conv{C}\in\mathcal{S}_{n}$.
Now $C=\ver{\sigma}\subseteq \Sol_{\KK}{(\Theta)}$ by (\ref{Eq:Solution_vertices}), because $C\subseteq W$. Using Lemma \ref{Cor:SimplexSolution} we therefore deduce $\sigma\subseteq \Sol_{\MM}{(u_n^{2}(\Theta))}$. In particular, since $x \in \sigma$ by construction, we have
$x\in \Sol_{\MM}{(u_n^{2}(\Theta))}$.

Conversely, suppose $x\in\Sol_{\MM}{(u_n^{2}(\Theta))}$, and let $\sigma$  be the unique
simplex of~$\mathcal{S}_n$ such that $x\in\rel{\sigma}$. Then $\rel{\sigma}\cap \Sol_{\MM}{(u_n^{2}(\Theta))}\neq\emptyset$, and again by Lemma \ref{Cor:SimplexSolution} we infer $\ver{\sigma}\subseteq \Sol_{\KK}{(\Theta)}$. By  (\ref{Eq:Solution_vertices}) we obtain  $\ver{\sigma}\subseteq W$. By the definition of the Kleene triangulation $\mathcal{S}_{n}$ (Definition \ref{d:kleenet}), $\ver{\sigma}$ is a chain in $(\{0,\m,1\}^{n},\preceq_{n})$,
hence a chain in $(W,\preceq)$, and therefore
\begin{align}\label{eq:innerve}
\ver{\sigma}\in\N{(W)}
.
\end{align}
 Since $X=|\Sigma_{W}|$ by hypothesis, (\ref{eq:innerve}) entails  $\sigma\subseteq X$, whence $x\in X$.
This completes the proof.
\end{proof}
We now characterise those weighted abstract simplicial complexes that arise as weighted nerves of Kleene spaces.
\begin{definition}\label{d:kleenec}A weighted abstract simplicial complex $(\SC,\omega)$ over the (finite) set $V$ is a \emph{Kleene complex} if it satisfies the following conditions.
\begin{itemize}
         \item[(a)] $\omega(v)\in\{1,2\}$ for each $v\in V$.
         \item[(b)] The abstract simplicial complex $\SC$ has no missing face of cardinality three or greater.
         \item[(c)] There is a comparability over the graph $\SC^{(2)}$ such that if $\omega(v)=1$, then~$v$ is a sink.
      \end{itemize}
\end{definition}
\begin{remark}\label{r:compare}Comparing Definitions \ref{d:kleenec} and \ref{d:kleenebasis}, we see that \emph{a basis ${\mathcal{B}}$ is a Kleene basis if, and only if, $(\mathcal{B}^{\bowtie},{\rm mult})$ is a Kleene complex}, where $(\mathcal{B}^{\bowtie},{\rm mult})$ is as in (\ref{eq:complbasis2}).
\end{remark}

\begin{lemma}\label{Lem:ontohom}
   A weighted abstract simplicial complex $(\SC,\omega)$ is a Kleene complex if, and only if,   there exist an integer $n\geq 1$ and a subposet $(W,\preceq)$ of $(\{0,\m,1\}^n,\preceq_n)$ such that $(\SC,\omega)$ is isomorphic to the
   weighted abstract simplicial complex $(\N{(W)},\den)$.
\end{lemma}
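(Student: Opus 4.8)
The plan is to prove the two directions separately, using the combinatorial characterisation of order complexes (Lemma~\ref{lemma:orderedcomplexes}) as the pivot, and then checking that the weight function $\omega$ matches the denominator weighting $\den$ inherited from $\{0,\m,1\}^{n}$.

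For the \emph{harder, ``only if'' direction}, suppose $(\SC,\omega)$ is a Kleene complex over a vertex set $V$. Conditions (b) and (c) of Definition~\ref{d:kleenec} are precisely the hypotheses of Lemma~\ref{lemma:orderedcomplexes} (no missing face of cardinality $\geq 3$, and $\SC^{(2)}$ a comparability); hence $\SC=\N{(O)}$ for some finite poset $O$, via an isomorphism of abstract simplicial complexes identifying the vertices of $V$ with the elements of $O$. The transitive orientation witnessing the comparability supplies the order $\leq$ on $O$; I would fix such an orientation once and use it throughout. The substantive task is then to \emph{realise $O$ as a subposet $(W,\preceq)$ of $(\{0,\m,1\}^{n},\preceq_{n})$ in a weight-respecting way}. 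Here condition (a)---$\omega$ takes values in $\{1,2\}$---and condition (c)---every weight-$1$ vertex is a sink of the oriented comparability, i.e.\ a maximal element of $O$---together say that weight-$1$ elements sit among the maximal elements of $O$, exactly as the distinguished set $M$ of a Kleene space behaves (cf.\ Definition~\ref{d:kleenespace} and Definition~\ref{d:asspoly}, where $\omega(w)=1$ iff $w\in M$). The embedding I would build sends each element $o\in O$ to the vector whose $i$-th coordinate records, for a suitable indexing $i$, whether $o$ lies strictly below, at, or at-or-above a chosen ``test'' element, arranging the three values $0,\m,1$ so that $\preceq_{n}$ restricts to the order of $O$ and so that $o$ maps into $\{0,1\}^{n}$ (denominator $1$) precisely when $\omega(o)=1$. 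Concretely one indexes the coordinates by a generating family of order-filters of $O$ (an order-embedding of any finite poset into a product of chains), tweaking the construction so that maximal-and-weight-$1$ elements land on a vertex of the cube while all other elements acquire a $\m$ in some coordinate, forcing $\den=2$.

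The technical heart---and the step I expect to be the \textbf{main obstacle}---is verifying that this coordinate assignment can \emph{simultaneously} (i) order-embed $O$ into $(\{0,\m,1\}^{n},\preceq_{n})$, and (ii) match weights via $\den$ on the nose. Embedding a finite poset into a product of $2$-element or $3$-element chains is standard, but the extra constraint is that the \emph{denominator} of the image vector equals $\omega$; since $\den{v}=1$ iff $v$ has integer coordinates (i.e.\ $v\in\{0,1\}^{n}$), and $\den{v}=2$ for any $v$ containing a $\m$, I must ensure every weight-$2$ element carries at least one half-coordinate and every weight-$1$ element carries none. The sink/maximality condition (c) is exactly what makes this consistent with the order-embedding: a maximal element may safely be placed at a cube vertex without violating $\preceq_{n}$-comparabilities, whereas a non-maximal element is forced below something and can be given a $\m$. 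I would discharge this by an explicit construction and a short case check that the resulting $(W,\preceq)$, with $W$ the image, satisfies $\N{(W)}\cong\SC$ as weighted complexes with $\den=\omega$.

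For the \emph{``if'' direction}, suppose $(\SC,\omega)\cong(\N{(W)},\den)$ for a subposet $(W,\preceq)$ of $(\{0,\m,1\}^{n},\preceq_{n})$. I would verify the three defining conditions of a Kleene complex directly. Condition (a) is immediate: any $v\in\{0,\m,1\}^{n}$ has $\den{v}\in\{1,2\}$, since its coordinates lie in $\{0,\m,1\}$ and so the least common denominator is $1$ or $2$. Condition (b) and the comparability half of (c) hold because $\N{(W)}$ is by definition the nerve (order complex) of the poset $(W,\preceq)$, so Lemma~\ref{lemma:orderedcomplexes} applies and furnishes the transitive orientation (orient each edge of $\N{(W)}^{(2)}$ by $\preceq$). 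It remains to check the sink clause of (c): if $\den{v}=1$ then $v\in\{0,1\}^{n}$, and I must show such a $v$ is $\preceq_{n}$-maximal in $W$, hence a sink of the oriented comparability. This follows from the order $\preceq$ on $\{0,\m,1\}$ (Figure~\ref{Fig:V}), in which $0$ and $1$ are the two maximal elements and $\m$ is the unique minimum; consequently any integer vector is coordinatewise maximal among all vectors below or equal to it in the relevant sense, making it maximal in the subposet $W$, which is exactly the sink condition under the chosen orientation. Assembling the two directions completes the proof.
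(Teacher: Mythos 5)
Your plan follows the paper's proof essentially step for step: both directions pivot on Lemma~\ref{lemma:orderedcomplexes}, the ($\Leftarrow$) verification is identical (denominators lie in $\{1,2\}$, the nerve of a poset has no large missing faces and a transitively orientable $2$-skeleton, and integer vectors are maximal in $(\{0,\m,1\}^{n},\preceq_{n})$, hence sinks), and for ($\Rightarrow$) the paper realises your sketched embedding explicitly as $\varphi(v_i)=(d(i),\delta^i_1,\dots,\delta^i_m)$, where the coordinate $\delta^i_j$ records whether $v_j\leq v_i$ (taking value $0$ only at maximal $v_i$, $\m$ otherwise) and the extra first coordinate $d(i)$ forces a $\m$ into the image of every weight-$2$ element. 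The one detail your sketch leaves implicit --- that a \emph{maximal} element of weight $2$ would otherwise land in $\{0,1\}^{m}$ and get denominator $1$, so the order-relation coordinates alone cannot match the weights --- is exactly what that extra coordinate repairs, and you flag the need for such a tweak, so your plan is correct and matches the paper's argument.
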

\begin{proof}
($\Leftarrow$). 
Let $V$ be the underlying set of $\SC$, and let $f\colon W\to V$ be an isomorphism of the weighted abstract simplicial complexes
$(\N{(W)},\den)$  and $(\SC,\omega)$.
Since $W\subseteq \{0,\m,1\}^{n}$ we have $\den{w}\in\{1,2\}$ for each $w\in W$, and therefore $\omega(v)\in\{1,2\}$ for each $v\in V$, because $\omega\circ f=\den$.
From Lemma \ref{lemma:orderedcomplexes} it follows that $\N{(W)}$ has no missing faces of cardinality $\geq 3$, and so the same holds for $(\SC,\omega)$. Since $\N{(W)}$ is the nerve of the poset $(W,\preceq)$, it follows at once that  $\preceq$ induces a comparability over $\N^{(2)}{(W)}$, and therefore $\SC^{(2)}$ is a comparability, too. If $v\in V$ is such that $\omega(v)=1$, then there exists $w\in W$ such that  $f(w)=v $ and $\den{w}=1$. By the definition of $\preceq$, it follows that $w\in\max{W}$, that is, $w$ is a sink in the comparability induced by~$\preceq$. Therefore,~$v$ is a sink in the corresponding comparability on $\SC^{(2)}$ induced by~$f$. 

($\Rightarrow$). 
 By Lemma \ref{lemma:orderedcomplexes} and (b--c) there exists a partial order $\leq$ on the underlying set  $V$ of $\SC$ such that  the abstract simplicial complex $\SC$ is isomorphic to the nerve $\N{(V)}$ of $(V,\leq)$. Since each $v\in V$ such that $\omega(v)=1$ is a sink in the comparability induced by $\leq$, we have $\omega^{-1}(1)\subseteq \max{V}$.

\begin{claimnonum}  Say $V=\{v_1,\ldots, v_m\}$ for some integer $m\geq 0$. Then there exists an order-embedding
\[\textstyle
\varphi\colon (V,\leq)\hookrightarrow (\{0,\m,1\}^{m+1},\preceq_{m+1})
\]
such that $\varphi^{-1}(\{0,1\}^{m+1})=\omega^{-1}(1)$.
\end{claimnonum}

\begin{proof}[Proof of Claim.] We define $\varphi: V\rightarrow \{0,\m,1\}^{m+1}$ by
\[
      \varphi(v_i)=(d(i),\delta_1^i,\ldots,\delta_m^i)
,
\]
   where
\[
     d(i)=\begin{cases}
     1&\mbox{ if }\omega(v_i)=1;\\
     \m&\mbox{otherwise,}
     \end{cases}
\]
   and
\[
      \delta_j^i=\begin{cases}
      1 & \mbox{if  }v_j\leq v_i;\\
      0 & \mbox{if  }v_i \in \max{V}\mbox{ and }v_j \not\leq v_i;\\
      \m & \mbox{otherwise
;}
      \end{cases}
\]
for each $i,j\in\{1,\ldots,m\}$.
It is clear that $\varphi$ is an injection.

To prove that $\varphi$ is  order-preserving, suppose   $v_i\leq v_j$. Since $\m\preceq 0,1$, to show that $\varphi(v_{i})\preceq_{m+1}\varphi(v_{j})$ it suffices to prove the following
for each $k \in \{1,\ldots,m\}$.
   \begin{enumerate}
    \item $d(i)=1$ implies $d(j)=1$.
    \item $\delta_{k}^{i}=1$ implies $\delta_k^j=1$.
    \item $\delta_{k}^{i}=0$ implies $\delta_k^j=0$.
   \end{enumerate}
%
 Proof of 1. If $d(i)=1$ then $v_i\in \omega^{-1}(1)\subseteq \max{V}$. Therefore $v_i=v_j$, which implies $d(j)=d(i)=1$.

\noindent 
Proof of 2. If $\delta_k^{i}=1$, then $v_k\leq v_i\leq v_j$. Thus $\delta_k^{j}=1$.

\noindent 
Proof of 3. If $\delta_k^{i}=0$, then $v_i\in\max{V}$. Thus $v_i=v_j$, which implies
 ${\delta_k^{i}=\delta_k^{j}=0}$.

To show that $\varphi$ is an order-embedding,  suppose $\varphi(v_i)\preceq_{m+1}\varphi(v_j)$ for some $v_{i},v_{j}\in V$. Then $\delta_i^{i}=1=\delta_{i}^{j}$, which implies $v_i\leq v_j$.

It remains to show that  $\varphi^{-1}(\{0,1\}^{m+1})=\omega^{-1}(1)$.   By direct inspection of the definition of $\varphi$ and $d(i)$,  $\varphi^{-1}(\{0,1\}^{m+1})\subseteq \omega^{-1}(1)$. For the other inclusion, observe that if $\omega(v_i)=1$ then $d(i)=1$ and, since $\omega^{-1}(1)\subseteq\max{V}$, $\delta_{j}^{i}\in\{0,1\}$ for each $j \in \{1,\ldots,m\}$. Therefore $\varphi(v_i)\in\{0,1\}^{m+1}$.
 \end{proof}

Now let $(W,\preceq)$ denote  the subposet of $(\{0,\m,1\}^{m+1},\preceq_{m+1})$ whose underlying set is $W=\varphi(V)$. By the Claim, $\varphi$ determines an isomorphism of abstract simplicial complexes between $\SC\cong\N{(V)}$ and $\N({W})$.
Using the fact that $\varphi^{-1}(\{0,1\}^{n})=\omega^{-1}(1)$, we have $\omega(v)=\den(\varphi(v))$ for each $v \in V$.  Hence
$\varphi$ determines an isomorphism of weighted abstract simplicial complexes, which proves  ${(\SC,\omega)\cong(\N{(V)},\omega)\cong(\N{(W)},\den)}$.
\end{proof}

\begin{proof}
[Proof of Theorem II]
$(\Rightarrow)$. 
Suppose $A$ is free over the finite Kleene algebra~$B$, i.e., $A\cong F(B)$. Let $D(B)=
(W,\leq,R,M)$ be the Kleene space dual to~$B$,  let $(\N{(W)}, \omega)$ be its associated weighted abstract simplicial complex, and let $P_{\N{(W)}}^{\omega}$ be its companion polyhedron. Then, by Theorem I,
\[
A \cong F(B) \cong \McN{(P_{\N{(W)}}^{\omega})}
.
\]
It suffices to prove that $\McN{(P_{\N{(W)}}^{\omega})}$ has a Kleene basis, since Kleene bases are clearly preserved by isomorphisms of MV-algebras. By  its very definition~(\ref{eq:nerve}),
the polyhedron $P_{\N{(W)}}^{\omega}$ comes with a regular triangulation $\Sigma$ that satisfies
\begin{align}\label{eq:iso}
(\SC{(\Sigma}),\den_{\Sigma})\cong(\N{(W)}, \omega)
,
\end{align}
the indicated isomorphism being of weighted abstract simplicial complexes, where $(\SC{(\Sigma}),\den_{\Sigma})$ is defined as in (\ref{eq:complbasis}). By Lemma \ref{lemma:schauder-basis} the Schauder basis~$H_{\Sigma}$ over $\Sigma$ is a basis of $\McN{(P_{\N{(W)}}^{\omega})}$, and the weighted abstract simplicial complexes $(\SC{(\Sigma)},\den_{\Sigma})$ and $(H^{\bowtie}_{\Sigma},{\rm mult})$ are isomorphic. By (\ref{eq:iso}) we obtain
\begin{align*}
(H^{\bowtie}_{\Sigma},{\rm mult})\cong(\N{(W)}, \omega)
,
\end{align*}
which by Lemma \ref{Lem:ontohom}  entails at once that $H_{\Sigma}$ is a Kleene basis of $\McN{(P_{\N{(W)}}^{\omega})}$.

 $(\Leftarrow)$. 
Let $\mathcal{B}$ be a Kleene basis of $A$. In light of Remark \ref{r:compare}, by Lemma \ref{Lem:ontohom} there is an integer  $n\geq 1$ and a subposet $(W,\preceq)$ of $(\{0,\m,1\}^{n},\preceq_n)$ such that there exists an isomorphism of weighted abstract simplicial complexes
 \begin{align}\label{eq:firstofall}
 (\mathcal{B}^{\bowtie},{\rm mult})
\cong 
(\N{(W)},\omega)
.
 \end{align}
 Set
 \begin{align}\label{eq:satisfies}
 X = \bigcup\{\conv{C}
\mid
C\in \N{(W)}\}
.
 \end{align}
Then  $X$ is triangulated by the regular simplicial complex $\Sigma$ whose simplices are the sets $\conv{C}$, as $C$ ranges in $\N{(W)}$. By  Lemma  \ref{lemma:schauder-basis}, the Schauder basis $H_{\Sigma}$ over $\Sigma$ is a basis of  $\McN{(X)}$, and there is an   isomorphism of weighted abstract simplicial complexes
\begin{align}\label{eq:moreover}
(\N{(W)},\omega) 
\cong 
(H^{\bowtie}_{\Sigma},{\rm mult})
.
\end{align}
By (\ref{eq:firstofall}) and (\ref{eq:moreover}) we have  $(\mathcal{B}^{\bowtie},{\rm mult})\cong (H^{\bowtie}_{\Sigma},{\rm mult})$. This, by Lemmas \ref{lemma:schauder-basis} and~\ref{l:isocriterion2}, implies the existence of an isomorphism of MV-algebras
\begin{align}\label{eq:congfinal}
A\cong\McN{(X)}
.
\end{align}
To complete the proof, in light of (\ref{eq:congfinal}) it suffices to show that $\McN{(X)}$ is free over a finite Kleene algebra.
Now, by its very definition (\ref{eq:satisfies}), $X$ satisfies  condition
 (ii) in  Lemma \ref{lemma:triangofX}, and therefore there exists $\Theta\subseteq E(\widetilde{K}^{n})^{2}$ such that $X=\Sol_{\MM}{(u_n^{2}(\Theta))}$. By Lemma \ref{Th:DualAsSolutionSetMV} we have
 \[
 \McN{(X)}=\McN{(\Sol_{\MM}{(u_n^{2}(\Theta)))}}\cong \McN_{n}/\widehat{u_n^{2}(\Theta)}
,
 \]
 which displays $\McN(X)$ as the MV-algebra freely generated by  the finite Kleene algebra $E(\widetilde{K}^n)/\Theta'$, where $\Theta'$ is the congruence on $E(\tilde{K}^{n})$ generated by $\Theta$.

\end{proof}

\section{Concluding remarks.}\label{s:conclusion}
%
%
One may wonder which uniqueness properties, if any, hold for MV-algebras freely generated by Kleene algebras.

\begin{question} If $F(B)$ and $F(B')$ are MV-algebras free over the
Kleene algebras $B$ and $B'$, and $F(B)$ is isomorphic to $F(B')$, must $B$ and $B'$ be isomorphic as Kleene algebras?
\end{question}
%
 The answer is readily seen to be negative.
Let $K=\{0,\m,1\}$ be the 3-element  Kleene algebra, and let $K^{2}$ be its direct product. Set
$K' = K^2 \setminus \{(0,1),(1,0)\}$, and observe that $K'$ is a subalgebra of $K^{2}$ with the operations inherited by restriction.  Obviously $K^2$ is not isomorphic to $K'$. Computation shows that
the Kleene space dual to $K^{2}$ is
\[
D(K^2)=\bigl(\,
\{a,b\},\{(a,a),(b,b)\},\{(a,a),(b,b)\},\emptyset\,
\bigr)
,
\]
while the Kleene space dual to $K'$ is
\[
D(K')=\bigl(
\{a,b\},\{(a,a),(b,b)\},\{(a,a),(b,b),(a,b),(b,a)\},\emptyset\,) 
.
\]
Thus both spaces consist of a trivially ordered doubleton, and both  have as distinguished subset of their maximal elements the empty set.
Using Definition \ref{d:asspoly} we therefore see that the weighted abstract simplicial complex associated to each space is
\[
(
\N{(D(K^{2}))},\,\omega
)=(
\N{(D(K'))},
\omega
)=\left(\,
\big\{
\{a\},\{b\}
\big\},\,\omega\,\right)
\]
where $\omega \colon \{a,b\} \to \{1,2\}$ is
the  function constantly equal to $2$. By Theorem I we have $F(K^2)\cong F(K')$ immediately, and,  with some further computation, $F(K^2)\cong F(K')\cong K^{2}$.\qed

The answer to Question 1 remains negative even if one asks that $F(B)$ be a free  MV-algebra, although producing a counterexample requires more work.
The MV-algebra freely generated by $n$ elements is certainly free over the  Kleene algebra freely generated by  $n$ elements; the question is whether it can also be free over some other Kleene algebra.

\begin{question} Can the free MV-algebra $\F_{n}^{\MM}$ on $n$ generators, for $n\geq 1 $ an integer, be free over a finite Kleene algebra that is not a free Kleene algebra?
\end{question}
%
 We show that the answer is affirmative by  exhibiting a non-free Kleene algebra~$B$ such that the free
MV-algebra over $2$ generators is free over $B$.
We describe $B$ through its dual Kleene space $D(B)$.
Recall that the dual space of the free $2$-generated Kleene algebra is $\widetilde{K}^{2}$;
 its underlying poset is depicted in Figure~\ref{fig:k2}.
\begin{figure}[h]
\centering
{\includegraphics[scale=0.4]{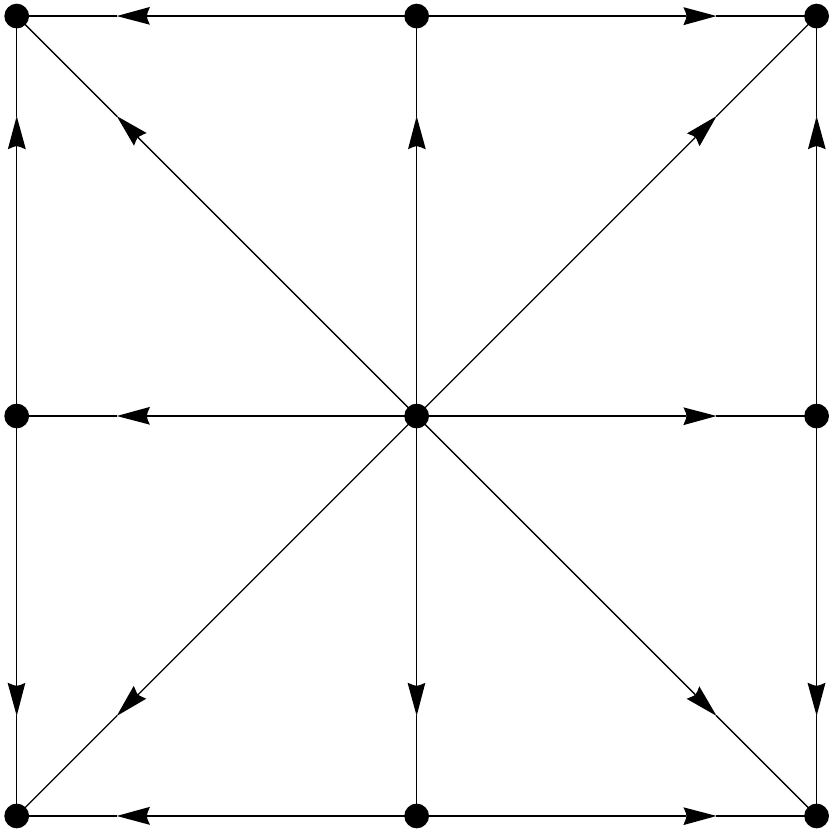}}
\caption{The Kleene (regular) triangulation $\mathcal{S}_{2}$ of $[0,1]^2$, endowed with a comparability.}
\label{fig:standard}
\end{figure}
Figure \ref{fig:standard} depicts the Kleene  triangulation $\mathcal{S}_{2}$ of $[0,1]^2$.
The underlying poset $\{0,\m,1\}^{2}$ of $\widetilde{K}^{2}$ induces on $\mathcal{S}_{2}$ the comparability described by the arrows
depicted in Figure \ref{fig:standard}. The labels in Figure \ref{fig:k2} are then the
denominators of the corresponding points of $\{0,\m,1\}^2\subseteq [0,1]^{2}$.

Now, consider the subset $W=\bigl(
(\{0,\m,1\}^{2} \times \{0\})
\setminus \{(0,\m,0)\}
\bigr) \cup \{(1,\m,\m)\}$ of $(\{0,\m,1\}^{3},\preceq_3)$ shown in Figure \ref{fig:w}.a. Then $W$ is a poset under the order inherited from $(\{0,\m,1\}^{3},\preceq_3)$; its Hasse diagram is depicted in Figure \ref{fig:w}.b. The  edges connecting elements of  $W$ in Figure \ref{fig:w}.a constitute the $2$-skeleton of an order complex isomorphic to $\N(W)$.
 Let $B$ be the
Kleene algebra whose dual space $D(B)$ is the subobject of $\widetilde{K}^{3}$ whose underlying poset
is
$W$.
The labelling in Figure \ref{fig:w}.b is induced by the distinguished subset $M$ of maximal elements of $D(B)$: elements labelled $1$ lie in $M$, elements labelled $2$ do not. A moment's reflection shows that $B$ is not a free Kleene algebra. By Theorem I, the MV-algebra $\McN{(P)}$ of $\Z$-maps on the geometric realization $P$ of the weighted nerve $(\N{(W)},\omega)$, where the range of $\omega$ is indicated by the labels in  Figure \ref{fig:w}.b, is free over $B$.
\begin{figure}[h]
\centering
\subfigure[\text{The subset $W$ of $\{0,\m,1\}^{3}$.}]
{\includegraphics[height=38mm, width=43mm]{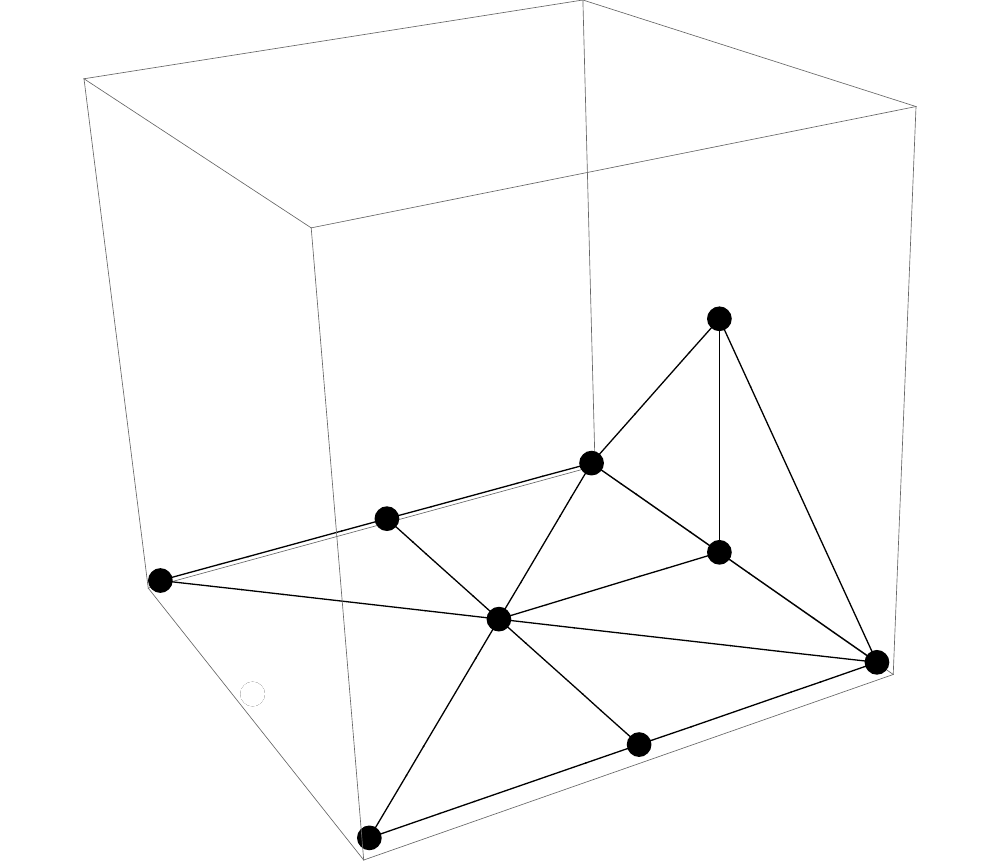}}
\qquad
\subfigure[
\text{The poset $W$.}
]
{\includegraphics[scale=0.6]{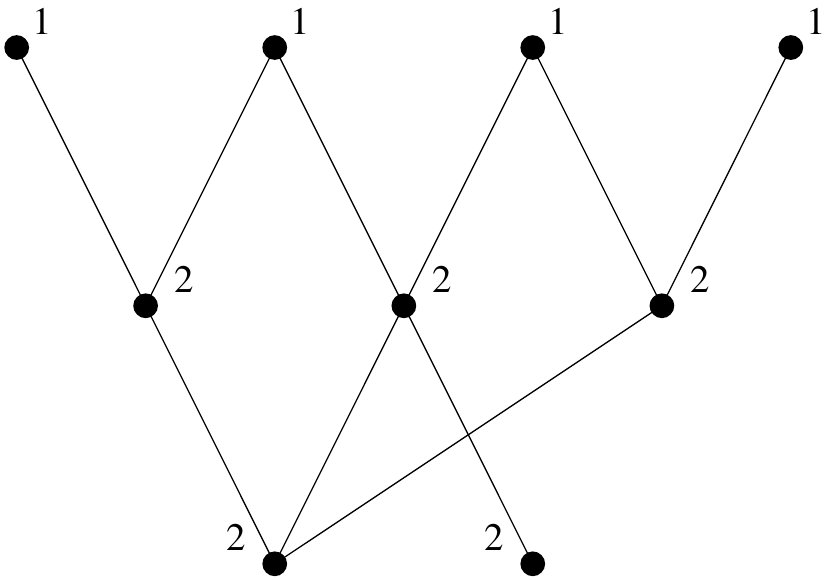}}
\caption{The underlying poset $W$ of the Kleene space dual to $B$, with labelled nodes.}
\label{fig:w}
\end{figure}
Recall that the geometric realization $P$ of $(\N{(W)},\omega)$ is defined as in~(\ref{eq:nerve}), and therefore it comes with its own regular triangulation $\Sigma$.  Then, using Lemma \ref{lemma:schauder-basis}, there is an isomorphism of weighted abstract simplicial complexes
\begin{align}\label{eq:prelast}
(
\N{(W)},
\omega
) 
\cong 
(
H^{\bowtie}_{\Sigma},
\den_{\Sigma}
)
.
\end{align}
Since $W$ has cardinality $9$, $P$ is a rational polyhedron in $[0,1]^{9}\subseteq \R^{9}$, and the Schauder basis $H_{\Sigma}$ has $9$ hats.

Next
consider the regular triangulation $\Delta$ of $[0,1]^{2}\subseteq \R^{2}$ shown in Figure \ref{fig:regular}, and its associated Schauder basis $H_{\Delta}$.  Direct inspection using $W$ shows that there is an isomorphism of weighted abstract simplicial complexes
\begin{align}\label{eq:last}
(
\N{(W)},
\omega
) 
\cong
 (
H^{\bowtie}_{\Delta},
\den_{\Delta}
)
.
\end{align}
(The comparability on the $2$-skeleton of $\Delta$  displayed in Figure \ref{fig:regular} is the one induced by the isomorphism (\ref{eq:last}).)
By Lemma \ref{l:isocriterion2}, (\ref{eq:prelast}--\ref{eq:last})  imply that the MV-algebras $\McN{(P)}$ and $\McN_{2}$ are isomorphic.  Since the former is free over $B$ by Theorem I, the latter is, too.
\begin{figure}[h]
\centering
{\includegraphics[scale=0.4]{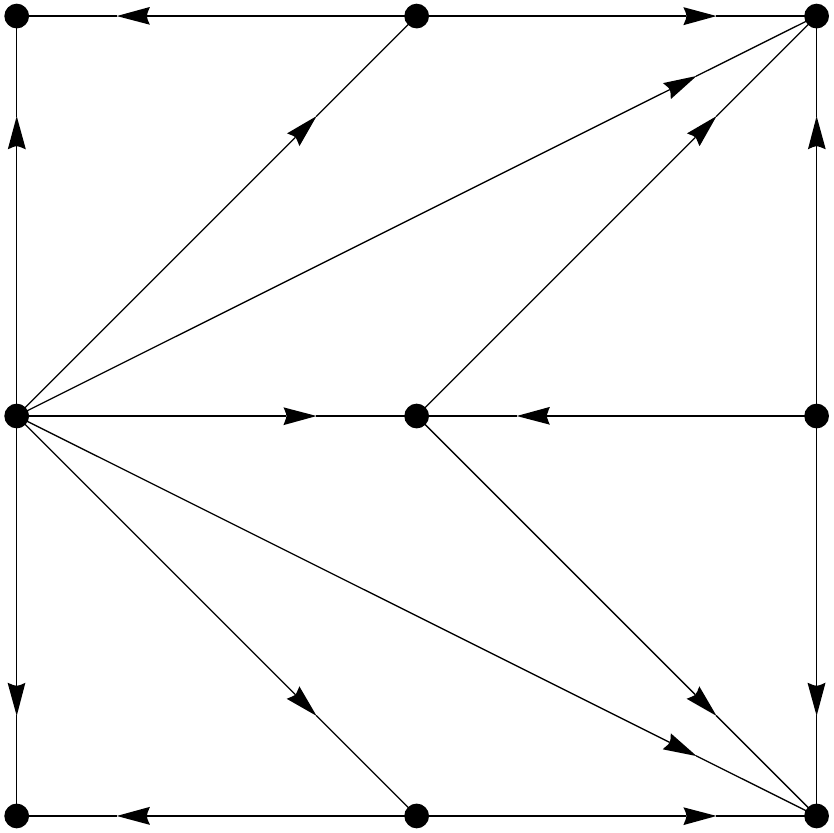}}
\caption{A regular triangulation $\Delta$ of $[0,1]^{2}$, endowed with a comparability.}
\label{fig:regular}
\end{figure}
By the Chang-McNaughton Theorem, $\McN_{2}$ is the free $2$-generated MV-algebra, which is therefore freely generated by the non-free Kleene algebra $B$. 
\bibliographystyle{plain}

\end{document}